\documentclass[11pt,a4paper]{amsart}
\usepackage[foot]{amsaddr}
\usepackage{texmac}
\usepackage{enumitem}
\usepackage{mathtools}
\usepackage{hyperref}
\usepackage[all,cmtip]{xy}


\frenchspacing

\operators{Stab,Prim,Inn,Out,ss,B,G,R,K,Inf,Def,vol,Ext,St,ab,ind,res,GL,nrd,PSL,PGL}

\bbsymbols{b}{F,Z,Q,P,R,C,H}
\calsymbols{c}{H,U,X,Y,P,M,D,C,F,O,S}
\fraksymbols{f}{H}

\let\le\leqslant
\let\ge\geqslant

\def\dsum_#1_#2{\sum_{{#1}\atop {#2}}}
\def\rar{\rightarrow}
\def\maxprime{71}

\newcommand{\modtors}[1]{{#1}^{\mathrm{fr}}}
\newcommand{\modStors}[1]{{#1}^{S\text{-}\mathrm{fr}}}
\newcommand{\Stors}{{S\text{-}\mathrm{tors}}}
\newcommand{\lquo}{\backslash}

\newcommand{\order}{\cO}
\newcommand{\mat}{\cM}
\newcommand{\prm}{\mathfrak{p}}
\newcommand{\hyp}{\fH}
\newcommand{\ideal}{\mathfrak{N}}

\begin{document}

\title{Torsion homology and regulators of isospectral manifolds}
\author{Alex Bartel}
\address{Mathematics Institute, Zeeman Building, University of Warwick,
Coventry CV4 7AL, UK}
\author{Aurel Page}
\email{a.bartel@warwick.ac.uk, a.r.page@warwick.ac.uk}
\thanks{The authors thank the EPSRC for financial support via a First Grant,
respectively the EPSRC Programme Grant EP/K034383/1 LMF: L-Functions and Modular
Forms}
\llap{.\hskip 10cm} \vskip -0.8cm

\maketitle

\begin{abstract}
Given a finite group $G$, a $G$-covering of closed Riemannian manifolds, and
a so-called $G$-relation, a construction of Sunada produces
a pair of manifolds $M_1$ and $M_2$ that are strongly isospectral.
Such manifolds have the same
dimension and the same volume, and their rational homology groups are isomorphic.
Here, we investigate the relationship between their integral homology.
The Cheeger--M\"uller Theorem implies that a certain product of orders of torsion
homology and of regulators for $M_1$ agrees with that for $M_2$.
We exhibit a connection between the torsion in the integral homology of $M_1$ and
$M_2$ on the one hand, and the $G$-module structure of integral homology of the
covering manifold on the other,
by interpreting the quotients $\Reg_i(M_1)/\Reg_i(M_2)$ representation
theoretically. Further, we prove that the
$p^\infty$-torsion in the homology of $M_1$ is isomorphic to that of $M_2$ for
all primes $p\nmid \#G$. For $p\leq \maxprime$, we give examples of pairs of
strongly isospectral hyperbolic $3$-manifolds for which the $p$-torsion homology
differs, and we conjecture such examples to exist for all primes~$p$.
\end{abstract}

\section{Introduction}\noindent
Two closed Riemannian manifolds $M_1$ and $M_2$ are said to be \emph{isospectral}
if the Laplace--Beltrami operators acting on functions on $M_1$ and on $M_2$
have the same spectrum,
equivalently if the spectral zeta functions $\zeta(M_1,s)$ and $\zeta(M_2,s)$
are equal. The manifolds are said to be \emph{strongly isospectral} if
the spectra of every natural self-adjoint elliptic differential operator
on~$M_1$ and~$M_2$ agree.
In particular, if $M_1$ and~$M_2$ are strongly isospectral, then
the Laplace--de Rham operators acting on differential $i$-forms on the respective
manifold have the same spectrum, equivalently the
zeta functions $\zeta_i(M_1,s)$ and $\zeta_i(M_2,s)$ encoding those spectra are
equal. Following Kac's famous question ``Can one hear the shape of a drum?''
\cite{Kac}, the following broad questions have received a lot of
attention~\cite{gordon, GordonSchueth}.
\begin{question}\label{q:1}
Which isometry invariants of closed Riemannian manifolds are
isospectral invariants? Which ones are strongly isospectral invariants?
\end{question}

For example, two Riemannian manifolds that are strongly isospectral
have the same dimension, volume and Betti numbers.

By analogy with the number theoretic notion of so-called arithmetically
equivalent number fields, Sunada \cite{Sunada} proposed a general
construction showing that there exist Riemannian manifolds
that are strongly isospectral, but are not even homeomorphic.
Let us briefly recall Sunada's
construction. Let $X\rar Y$ be a $G$-covering of closed 
manifolds, where
$G$ is a finite group.
Let~$U_1$, $U_2$ be two subgroups of $G$ with the property that there is an
isomorphism $\C[G/U_1]\cong \C[G/U_2]$ of linear permutation representations of $G$,
or equivalently that for every conjugacy class~$c$ of~$G$ we have~$\#(c\cap U_1)
= \#(c\cap U_2)$.
In this case we say that the formal linear combination $U_1-U_2$ is a
$G$-relation. Sunada proves that then the intermediate coverings $X/U_1$ and
$X/U_2$ are strongly isospectral.

In this paper, we want to take the analogy with number fields further. The
Cheeger-M\"uller Theorem \cite{Cheeger,Mueller1,Mueller2}, proving a conjecture of
Ray and Singer \cite{RaySinger}, gives a special value formula for the spectral
zeta functions of a Riemannian manifold, somewhat analogous to the analytic
class number formula for the Dedekind zeta function of a number field \cite{ClNoFormula}.
It implies that if $M_1$ and $M_2$ are strongly isospectral Riemannian manifolds, then
\begin{align}\label{eq:CheegerMuller}
\prod_{i=0}^d\left(\frac{\Reg_i(M_1)}{\#H_i(M_1,\Z)_{\tors}}\right)^{(-1)^i}=\;\;\;
\prod_{i=0}^d\left(\frac{\Reg_i(M_2)}{\#H_i(M_2,\Z)_{\tors}}\right)^{(-1)^i},
\end{align}
where, for a Riemannian manifold $M$, $\Reg_i(M)$ is the covolume of the
lattice~$H_i(M,\Z)/H_i(M,\Z)_{\tors}$ in the vector space $H_i(M,\bR)$ with respect to
a certain canonical inner product (see Notation \ref{not:pairings} \ref{not:reg}).
The study of torsion homology and regulators of manifolds has attracted a lot of
attention recently, see for instance~\cite{BVtorsion, BSVtorsion, brockdunfield, torsionJL}.

In light of equation~(\ref{eq:CheegerMuller}), a natural instance of
Question~\ref{q:1} is whether, for $M_1$ and $M_2$ as above, an  integer~$i\ge
0$, and a prime number $p$, we must necessarily have
\begin{align}\label{eq:ChMtors}
\#H_i(M_1,\Z)[p^\infty]=\#H_i(M_2,\Z)[p^\infty],
\end{align}
where $[p^\infty]$ denotes the $p$-primary torsion subgroup.

Here, we answer this and similar questions in the context of Sunada's
construction. More broadly, we study homological properties
of covering manifolds in $G$-relations, analogous to an old and fruitful
line of research in number theory.
For example, the following basic result is analogous to 
results of \cite{Boltje} on class groups of number fields.

%

\begin{prop}\label{prop:intro1}
Let $X\rar Y$ be a $G$-covering of closed Riemannian manifolds,
let $U_1 - U_2$ be a $G$-relation, and let $i\ge 0$ be an
integer. Then for all primes $p$ that do not divide
$\#G$, we have
$$
H_i(X/U_1,\Z)[p^\infty] \cong H_i(X/U_2,\Z)[p^\infty].
$$
\end{prop}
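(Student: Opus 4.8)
The plan is to work with coefficients in the $p$-adic integers $\Z_p$ and to exploit that, for $p\nmid\#G$, the group ring $\Z_p[G]$ is a maximal $\Z_p$-order in the semisimple algebra $\Q_p[G]$.

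\emph{Reduction to a statement about the $G$-module $H_i(X,\Z)$.} The covering $X\rar X/U_j$ is regular with deck group $U_j$ acting freely, so the singular chain complex $C_\bullet(X,\Z_p)$ is a complex of free $\Z_p[U_j]$-modules with $C_\bullet(X/U_j,\Z_p)\cong\Z_p\otimes_{\Z_p[U_j]}C_\bullet(X,\Z_p)$, and there is a Cartan--Leray spectral sequence $E^2_{a,b}=H_a(U_j,H_b(X,\Z_p))\Rightarrow H_{a+b}(X/U_j,\Z_p)$ for the natural $U_j$-action on homology. Since $p\nmid\#U_j$, the functors $H_a(U_j,-)$ vanish for $a>0$ on $\Z_p[U_j]$-modules, so the spectral sequence collapses to $H_i(X/U_j,\Z_p)\cong H_i(X,\Z_p)_{U_j}$. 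Put $M:=H_i(X,\Z)\otimes_\Z\Z_p=H_i(X,\Z_p)$, a finitely generated $\Z_p[G]$-module for the natural $G$-action. As $\#U_j$ is a unit of $\Z_p$, the element $e_{U_j}=\tfrac1{\#U_j}\sum_{u\in U_j}u\in\Z_p[G]$ is idempotent, the map $M^{U_j}\hookrightarrow M\twoheadrightarrow M_{U_j}$ is an isomorphism, and $M^{U_j}=e_{U_j}M=\operatorname{Hom}_{\Z_p[G]}(\Z_p[G/U_j],M)$. Finally, $\Z_p$ is flat over $\Z$, so $H_i(X/U_j,\Z_p)=H_i(X/U_j,\Z)\otimes\Z_p$ and $H_i(X/U_j,\Z)[p^\infty]$ is its torsion subgroup; since $e_{U_j}$ commutes with multiplication by $p$, the torsion subgroup of $e_{U_j}M$ equals $e_{U_j}(M_{\tors})=\operatorname{Hom}_{\Z_p[G]}(\Z_p[G/U_j],M_{\tors})$. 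Putting these together, $H_i(X/U_j,\Z)[p^\infty]\cong\operatorname{Hom}_{\Z_p[G]}(\Z_p[G/U_j],M_{\tors})$, and it therefore suffices to prove that $\Z_p[G/U_1]\cong\Z_p[G/U_2]$ as $\Z_p[G]$-modules.

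\emph{The permutation lattices, and the main obstacle.} This last isomorphism is the heart of the argument. The defining property of a $G$-relation gives $\C[G/U_1]\cong\C[G/U_2]$; since $\C[G/U_j]=\Q[G/U_j]\otimes_\Q\C$, the Noether--Deuring theorem gives $\Q[G/U_1]\cong\Q[G/U_2]$ as $\Q[G]$-modules, and extension of scalars gives $\Q_p[G/U_1]\cong\Q_p[G/U_2]$ as $\Q_p[G]$-modules. Now $\Z_p[G/U_1]$ and $\Z_p[G/U_2]$ are $\Z_p[G]$-lattices with isomorphic $\Q_p[G]$-spans, and this is where the hypothesis $p\nmid\#G$ is genuinely used: it makes $\Z_p[G]$ a maximal order in $\Q_p[G]$, and over a maximal order in a semisimple algebra over a complete discrete valuation ring a lattice is determined up to isomorphism by its span over the ambient algebra (this follows from the structure theory of such maximal orders as products of matrix rings over maximal orders in division algebras). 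Hence $\Z_p[G/U_1]\cong\Z_p[G/U_2]$, and applying $\operatorname{Hom}_{\Z_p[G]}(-,M_{\tors})$ completes the proof. I expect this step to be the main obstacle: a $G$-relation controls only the \emph{rational} permutation representations, and over $\Z[G]$ the lattices $\Z[G/U_1]$ and $\Z[G/U_2]$ are typically non-isomorphic; it is precisely the maximality of $\Z_p[G]$ for $p\nmid\#G$ that collapses this genus-versus-isomorphism gap, which is why one expects the statement to fail for primes dividing $\#G$, as confirmed by the examples announced below.
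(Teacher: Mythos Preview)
Your proof is correct, but it takes a somewhat different route from the paper's.

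The paper argues via homology with local coefficients on the base: for any subgroup $U\le G$ one has $H_i(X/U,\Z_{(p)})\cong H_i(Y,\Z_{(p)}[G/U])$, and since homology with local coefficients is additive in the coefficient module, an isomorphism $\Z_{(p)}[G/U_1]\cong\Z_{(p)}[G/U_2]$ of $\Z_{(p)}[G]$-modules immediately gives $H_i(X/U_1,\Z_{(p)})\cong H_i(X/U_2,\Z_{(p)})$. The fact that a $\Q[G]$-relation is a $\Z_{(p)}[G]$-relation for $p\nmid\#G$ is stated separately as Lemma~\ref{lem:Grelations} and referenced to Benson's book. You instead pass through the $G$-module $H_i(X,\Z_p)$: the Cartan--Leray spectral sequence collapses because $p\nmid\#U_j$, giving $H_i(X/U_j,\Z_p)\cong H_i(X,\Z_p)^{U_j}=\Hom_{\Z_p[G]}(\Z_p[G/U_j],H_i(X,\Z_p))$, and you then supply your own proof that $\Z_p[G/U_1]\cong\Z_p[G/U_2]$ via the maximal-order theory of $\Z_p[G]$.

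The trade-off is this: the paper's local-coefficients argument never needs the spectral sequence to collapse, so it proves the stronger Theorem~\ref{thm:noMackeyFunctor} valid for an arbitrary $\Z_{(p)}[G]$-relation (even when $p\mid\#G$), of which Proposition~\ref{prop:intro1} is then a corollary. Your argument, by contrast, uses $p\nmid\#U_j$ already in the reduction step, so it does not immediately yield that generalisation; on the other hand, your maximal-order argument makes the representation-theoretic input (Lemma~\ref{lem:Grelations}) self-contained rather than a black-box citation.
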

We actually prove a more general result, see Theorem \ref{thm:noMackeyFunctor}.

In providing counter examples, we have concentrated on closed
hyperbolic $3$-manifolds, a class of manifolds that plays an important r\^ole
in other branches of geometry, as well as in number theory. The following is
a result in the opposite direction to that of Proposition~\ref{prop:intro1}.

\begin{prop}\label{prop:intro2}
Let $p\leq \maxprime$ be a prime number. Then there exist strongly isospectral
closed hyperbolic $3$-manifolds $M_1$ and $M_2$ such that
\[
  \#H_1(M_1,\Z)[p^\infty]\neq \#H_1(M_2,\Z)[p^\infty].
\]
Moreover,
if~$2<p\leq \maxprime$, then there exist $3$-manifolds $M_1$
and $M_2$ as above with
$$
\#H_1(M_1,\Z)[p^\infty]=1,\;\;\text{ and }\;\;\#H_1(M_2,\Z)[p^\infty]=p.
$$
\end{prop}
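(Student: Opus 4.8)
The plan is to produce the examples explicitly: first reduce the geometric assertion to a statement about the $p$-modular $G$-module structure of first homology, and then realise the module in question as the $H_1$ of a suitable congruence cover. Any such example arises from Sunada's construction, so fix a $G$-covering $X\rar Y$ of closed hyperbolic $3$-manifolds and a $G$-relation $U_1-U_2$, and put $M_j=X/U_j$; these are strongly isospectral. The restriction and transfer maps relate $H_1(X,\Z)$ to each $H_1(X/U_j,\Z)$ with kernels and cokernels annihilated by $\#U_j$, so after tensoring with $\Z_p$ one has $H_1(X/U_j,\Z)\otimes\Z_p\cong\bigl(H_1(X,\Z)\otimes\Z_p\bigr)_{U_j}$ whenever $p\nmid\#U_j$, and in general the discrepancy is a $p$-group that vanishes for $p\nmid\#G$; this is the mechanism behind Proposition~\ref{prop:intro1} and Theorem~\ref{thm:noMackeyFunctor}. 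Since $\C[G/U_1]\cong\C[G/U_2]$ forces $H_1(M_1,\bQ)\cong H_1(M_2,\bQ)$, the whole difference between $\#H_1(M_1,\Z)[p^\infty]$ and $\#H_1(M_2,\Z)[p^\infty]$ is governed by the difference between the $p$-torsion of $\bigl(H_1(X,\Z)\otimes\Z_p\bigr)_{U_1}$ and of $\bigl(H_1(X,\Z)\otimes\Z_p\bigr)_{U_2}$. In particular one must take $p\mid\#G$.

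The algebraic heart of the argument is that, although $\C[G/U_1]\cong\C[G/U_2]$, the mod-$p$ permutation modules $\bF_p[G/U_1]$ and $\bF_p[G/U_2]$ need not be isomorphic when $p\mid\#G$: their $p$-modular Brauer characters can differ. I would choose $G$ and the relation precisely so that there is an $\bF_p[G]$-module $V$ with $\dim_{\bF_p}V^{U_1}\neq\dim_{\bF_p}V^{U_2}$, and arrange that a $\Z_p$-lattice reducing to a module built from $V$ occurs in the torsion-free quotient $\modtors{H_1(X,\Z_p)}$ with the right multiplicity and without interference from the other summands, so that the coinvariants $(\,\cdot\,)_{U_1}$ and $(\,\cdot\,)_{U_2}$ pick up different amounts of $p$-torsion. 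For the sharp form with $2<p\le\maxprime$ I would go further and pin down the $\Z_p[G]$-lattice structure of $\modtors{H_1(X,\Z_p)}$ finely enough that $(\,\cdot\,)_{U_1}$ has trivial $p$-torsion while $(\,\cdot\,)_{U_2}\cong\Z/p$, i.e.\ exactly one extra relation appears modulo $p$ on the $U_2$-side; the restriction to odd $p$ enters here, as at $p=2$ the transfer discrepancies interfere and one only controls the orders rather than their exact values.

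It remains to realise this picture geometrically. I would take $Y$ to be a compact congruence arithmetic hyperbolic $3$-manifold --- for instance a torsion-free congruence quotient obtained from the norm-one units of a suitable quaternion order --- whose fundamental group surjects onto $G$, and let $X$ be the associated $G$-cover. One then needs to know that $V$ genuinely occurs in $H_1(X,\bF_p)$ with the predicted multiplicity; this would be supplied either by growth and genericity results for the mod-$p$ homology of congruence covers, or, more concretely, by an explicit machine computation of $H_1(X,\Z)$ as a $\Z[G]$-module for a family of data $(Y,G,U_1,U_2)$ depending uniformly on $p$. Reading off the $U_j$-coinvariants then yields $\#H_1(M_j,\Z)[p^\infty]$, giving $1$ against $p$, and the bound $p\le\maxprime$ reflects the range over which this verification can be carried through.

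The main obstacle is exactly this last step, compounded by the integral precision demanded above: one enjoys a great deal of freedom in the $\C[G]$-module structure of $H_1(X)$ but very little a priori control over its integral torsion, so exhibiting an \emph{actual} hyperbolic $3$-manifold $G$-cover whose first homology realises the prescribed $G$-module --- rather than merely the prescribed character --- is delicate, and doing so uniformly in $p$, or verifying it computationally for each prime $p\le\maxprime$ separately, is what confines the statement to this range.
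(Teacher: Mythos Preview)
Your proposal and the paper agree on the essential strategy: use Sunada's construction with a $G$-relation $U_1-U_2$ that fails to be a $\Z_{(p)}[G]$-relation, take $Y$ to be a compact arithmetic hyperbolic $3$-manifold coming from a quaternion order, enumerate surjections $\pi_1(Y)\to G$, and verify by explicit machine computation that the $p$-torsion of $H_1(X/U_1)$ and $H_1(X/U_2)$ differs. The paper commits to the specific relations of Examples~\ref{ex:2-Gassman} (for $p=2$) and~\ref{ex:p-Gassman} (for odd $p$), and the bound $p\le\maxprime$ is exactly the range over which the search was completed; the ``growth and genericity'' alternative you float is not used and no such theoretical input is available.

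Where you diverge is in the intermediate representation-theoretic layer. You propose to control the $\Z_p[G]$-lattice structure of $H_1(X,\Z_p)$ and then read off the torsion of $H_1(M_j)$ via $U_j$-coinvariants. The paper does not attempt this: in the relations actually used one always has $p\mid\#U_j$, so the identification $H_1(X/U_j,\Z_p)\cong H_1(X,\Z_p)_{U_j}$ is spoiled by higher $H_*(U_j,-)$ terms, and in any case there is no mechanism for prescribing the integral $G$-module structure of $H_1(X)$. Instead the paper computes $H_1(X/U_j,\Z)\cong H_1(\Gamma,\Z[G/U_j])$ directly from a finite presentation of $\Gamma=\pi_1(Y)$ by linear algebra, first over $\F_p$ as a cheap filter and then over $\Z$ when the filter flags a candidate. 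Your route is not wrong as a heuristic, but it inserts an unnecessary and technically awkward step between the data one can actually compute and the conclusion; the paper's direct computation of the intermediate covers is both simpler and what was in fact carried out.
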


Note that hyperbolic $3$-manifolds have torsion-free homology in degrees~$0$, $2$,
and $3$, so in the situation of Proposition \ref{prop:intro2}, equation
(\ref{eq:ChMtors}) is trivially satisfied in those degrees.

It is already known that for every prime $p$, there exist strongly isospectral closed Riemannian
$4$-manifolds $M_1$ and $M_2$ 
such that $\#H_1(M_1,\Z)[p^\infty]\neq
\#H_1(M_2,\Z)[p^\infty]$. This is shown in \cite{Sunada}, using the
following two results. Firstly, every finite group can be realised as the
fundamental group of a closed smooth 4-manifold \cite[p. 402]{Shafarevich}.
Secondly, for every prime $p$, there exists a finite $p$-group $G$ with a
$G$-relation $U_1 - U_2$, where the abelianisations of $U_1$ and $U_2$ have different
orders \cite[\S 1, Example 3]{Sunada}.

At the other extreme, if $M$ is a closed oriented 2-manifold, then
$H_1(M,\Z)$ is torsion-free. Riemannian $3$-manifolds present an in-between case.
It follows from the Elliptisation Theorem, as proven by Perelman
\cite{Per1,Per2,Per3}, and from a theorem of Ikeda \cite{Ikeda}, that any two
isospectral $3$-manifolds with finite fundamental groups are homeomorphic.
In particular, Sunada's
construction for 4-manifolds cannot possibly generalise to $3$-manifolds.
Instead, we prove Proposition \ref{prop:intro2} by a judicious choice of $G$-relation
$U_1-U_2$, a different one for each prime $p$, and
then by performing a computer search among $G$-coverings of hyperbolic
$3$-manifolds and computing the resulting torsion homology of the intermediate
coverings corresponding to $U_1$ and $U_2$. Nevertheless, we believe that the
evidence of Proposition \ref{prop:intro2} justifies the following
conjecture\footnote{A few months after submitting this paper, we 
established Conjecture~\ref{conj:intro} in~\cite{equivsurg} using techniques
introduced in this paper, see in particular Proposition~\ref{prop:impliesconj}}.

\begin{conjecture}\label{conj:intro}
For every prime number $p$, there exist strongly isospectral closed hyperbolic $3$-manifolds
$M_1$ and $M_2$ such that
$\#H_1(M_1,\Z)[p^\infty]\neq \#H_1(M_2,\Z)[p^\infty]$.
\end{conjecture}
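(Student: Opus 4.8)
The plan is to reduce the conjecture, for a given prime $p$, to a purely algebraic statement about $G$-modules and $G$-relations, and then to realize the required algebra geometrically by equivariant surgery on hyperbolic $3$-manifolds.

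First I would combine the representation-theoretic interpretation of the regulator quotients with the Cheeger--M\"uller relation~(\ref{eq:CheegerMuller}). For a $G$-covering $X\rar Y$ of closed hyperbolic $3$-manifolds and a $G$-relation $\Theta=U_1-U_2$, set $M_j=X/U_j$. Since hyperbolic $3$-manifolds have torsion-free homology in degrees $0$, $2$, $3$, equation~(\ref{eq:CheegerMuller}) forces
\[
\frac{\#H_1(M_1,\Z)_{\tors}}{\#H_1(M_2,\Z)_{\tors}}=\prod_i\Big(\frac{\Reg_i(M_1)}{\Reg_i(M_2)}\Big)^{(-1)^i}.
\]
Each quotient $\Reg_i(M_1)/\Reg_i(M_2)$ is, up to $p$-adic units and up to the explicitly computable contributions of the permutation modules $\Z[G/U_j]$ and of the homology of $U_j$ that appear in the Cartan--Leray spectral sequence of $X\rar M_j$, a regulator constant of the $\Z_{(p)}[G]$-lattice $H_i(X,\Z)^{\mathrm{fr}}$ at $\Theta$. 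Hence the $p$-valuation of the left-hand side is governed by the $\Z_p[G]$-module structure of $H_*(X,\Z_p)$ alone, and distilling this yields a clean sufficient condition --- the r\^ole of Proposition~\ref{prop:impliesconj} --- roughly of the form: \emph{if there exist a finite group $G$, a $G$-relation $U_1-U_2$, and a finitely generated $\Z[G]$-module $M$ realizable (up to free summands) as the first homology of a $G$-covering of closed hyperbolic $3$-manifolds, and such that a suitable invariant of $M$ relative to $\Theta$ --- built from determinants of the induced pairings on the $M^{U_j}$ and from orders of Tate cohomology groups $\widehat H^{*}(U_j,M)$ --- has nonzero $p$-valuation, then Conjecture~\ref{conj:intro} holds for $p$.}

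Next I would supply the group theory. Sunada already produces, for every prime $p$, a finite $p$-group $G$ with a $G$-relation $U_1-U_2$ for which $\#U_1^{\mathrm{ab}}\neq\#U_2^{\mathrm{ab}}$ \cite[\S1, Example 3]{Sunada}; starting from these relations (or close variants) one would choose the module $M$ --- for instance an explicit $\Z_p[G]$-lattice inside $\Q_p[G/U_1]\oplus\Q_p[G/U_2]$, or a suitable quotient of the relation module --- so that the invariant of Proposition~\ref{prop:impliesconj} acquires nonzero $p$-valuation. This is a finite computation in the modular representation theory of $p$-groups, building on known evaluations of regulator constants of permutation modules, and I would expect it to go through uniformly in~$p$.

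The main obstacle is the remaining geometric realization: producing a $G$-covering $X\rar Y$ of closed \emph{hyperbolic} $3$-manifolds with $H_1(X,\Z)$ of the prescribed $\Z[G]$-module shape. Here I would (i) take a closed hyperbolic $3$-manifold with a chosen surjection of its fundamental group onto $G$ --- obtained by starting from a link in $S^3$ whose group surjects onto $G$ and performing Dehn surgery with coefficients in the kernel, which yields a hyperbolic manifold by Thurston's hyperbolic Dehn surgery theorem --- giving a $G$-cover $X_0$; and (ii) perform \emph{equivariant} Dehn surgery on $X_0$, along a carefully chosen $G$-invariant link, to reshape $H_1(X_0,\Z)=\pi_1(X_0)^{\mathrm{ab}}$ as a $\Z[G]$-module into the desired form while keeping the quotient manifold hyperbolic. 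The difficulty is the simultaneous control: Thurston's theorem guarantees hyperbolicity only for \emph{generic} surgery coefficients, so one must show that the surgeries needed to impose the prescribed $\Z[G]$-relations on $H_1$ can be chosen generic (or perturbed to be so) and $G$-equivariant, and that no stray torsion is introduced that would cancel the $p$-part produced by the algebraic steps. Granting this realization, combining it with the two algebraic steps produces strongly isospectral closed hyperbolic $3$-manifolds $M_1=X/U_1$ and $M_2=X/U_2$ with $\#H_1(M_1,\Z)[p^\infty]\neq\#H_1(M_2,\Z)[p^\infty]$, establishing the conjecture.
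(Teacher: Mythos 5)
The statement you are proving is stated in the paper as a \emph{conjecture}: the paper does not prove it for all $p$, but only establishes it for $p\le 71$ (Proposition~\ref{prop:intro2}) by an explicit computer search over arithmetic Kleinian groups, together with a conditional criterion (Corollary~\ref{cor:impliesconj}) reducing the case of an odd prime $p$ to finding a $\GL_2(\F_p)$-covering of closed hyperbolic $3$-manifolds with $b_1(X/U)-b_1(X/B)$ odd. Your first step --- combining the Cheeger--M\"uller identity with the regulator-constant computation for the relation $U_1-U_2$ in $\GL_2(\F_p)$ --- is essentially the paper's Corollary~\ref{cor:impliesconj}, with one caveat: your claim that the $p$-valuation of the regulator quotient is ``governed by the $\Z_p[G]$-module structure of $H_*(X,\Z_p)$ alone'' overstates what is known. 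Theorem~\ref{thm:regcomparison} carries the error term $\lambda_i(X,\Theta)^2$ measuring failure of $G$-descent, and Remark~\ref{rmrk:noclue} explicitly records that the authors do not know whether the regulator quotient is determined by the integral module structure in general; the criterion only works because one passes to $\Q^\times/(\Q^\times)^2$, where $\lambda_i^2$ disappears, which is why the paper's criterion is a \emph{parity} condition on a multiplicity rather than an exact valuation.

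The genuine gap is the geometric realization step, which you correctly identify as ``the main obstacle'' but do not resolve. Producing, for every prime $p$, a $G$-covering of closed \emph{hyperbolic} $3$-manifolds whose first homology has the prescribed $G$-module behaviour (e.g.\ odd multiplicity of the representation $I$) is precisely the content that is missing from this paper and that was only supplied later, in the follow-up work cited in the footnote to the conjecture, using equivariant Dehn surgery. Your sketch of that step (equivariant surgery on a $G$-invariant link, with genericity of Thurston's hyperbolic Dehn surgery made compatible with equivariance and with control of the induced $\Z[G]$-module structure on $H_1$) names the right difficulties but proves none of them; in particular, Sunada's $p$-group relations with $\#U_1^{\ab}\ne\#U_2^{\ab}$ cannot be transplanted directly, since his mechanism ($X$ the universal cover, so $H_1(X/U_j,\Z)\cong U_j^{\ab}$) is unavailable in dimension $3$ by the Elliptisation Theorem, as the paper notes in the introduction. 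As it stands, your argument establishes at most a conditional reduction already present in the paper, not the conjecture itself.
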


It follows from equation (\ref{eq:CheegerMuller}) that if $M_1$ and $M_2$
are strongly isospectral, then $\prod_i \left(\Reg_i(M_1)/\Reg_i(M_2)\right)^{(-1)^i}$
can be expressed in terms of torsion homology of the two manifolds, and in
particular is a rational number. Moreover, if $M_1$ and $M_2$ arise from Sunada's
construction, then it follows from Proposition \ref{prop:intro1}
that the numerator and denominator of this rational number are only divisible
by prime divisors of~$\#G$. Like in the case of torsion, it would be
interesting to understand the quotient of regulators in each
degree separately. In this direction we have the following result, which we
now state for arbitrary $G$-relations (see Definition \ref{defn:relation}).
Below, for a group~$U$ we write $U^{\ab}= U/[U,U]$ where~$[U,U]$ denotes the derived subgroup of $U$.
\begin{theorem}\label{thm:intro3}
Let $X\rar Y$ be a $G$-covering of closed oriented Riemannian $d$-manifolds,
and let $\sum_j n_jU_j$, $n_j\in \Z$ be a $G$-relation. Then:
\begin{enumerate}
\item for every integer $i\ge 0$, the product
$\prod_j \Reg_i(X/U_j)^{2n_j}$ is a rational
number that is a product of powers of prime divisors of $\#G$;
\item\label{item:regcompQ} for every $i\geq 0$, the class of
$\prod_j \Reg_{i}(X/U_j)^{2n_j}$ in $\Q^\times/(\Q^\times)^2$ only depends
on the isomorphism class of the $G$-module $H_i(X,\Q)$;
\item\label{item:regcompZ} if $p$ is a prime number that does not divide
  $\# U_j^{\ab}$ for any~$j$, then the $p$-part of
$\prod_j \Reg_{d-1}(X/U_j)^{2n_j} = \prod_j \Reg_{1}(X/U_j)^{-2n_j}$ only depends on
the isomorphism class of the $G$-module $H_{d-1}(X,\Z_{(p)})$.
\end{enumerate}
\end{theorem}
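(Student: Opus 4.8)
The plan is to reduce all three statements to linear-algebraic identities relating regulators of the intermediate coverings $X/U_j$ to $G$-invariants of the homology of $X$, via Shapiro's lemma and the canonical inner products. First I would observe that $H_i(X/U_j,\Q)\cong H_i(X,\Q)^{U_j}\cong \Hom_G(\Q[G/U_j],H_i(X,\Q))$, and that the canonical inner product on $H_i(X/U_j,\bR)$ is induced (up to a scalar accounting for the covering degree) from a $G$-invariant inner product on $H_i(X,\bR)$. The integral lattice $H_i(X/U_j,\Z)/\mathrm{tors}$ sits inside $H_i(X,\Q)^{U_j}$, and the transfer/restriction maps relate it to the image and the invariants of the lattice $\Lambda_i := H_i(X,\Z)/\mathrm{tors}$ under the idempotent $e_{U_j} = \frac{1}{\#U_j}\sum_{u\in U_j} u$. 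The key point is that $\Reg_i(X/U_j)^2$ is, up to controlled rational factors supported at primes dividing $\#G$, the covolume of $e_{U_j}\Lambda_i$ with respect to the invariant form, i.e.\ a determinant that can be read off from the $G$-module $\Lambda_i$ together with the form.

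For part (1), I would combine this with the defining property of a $G$-relation: since $\bigoplus_j n_j \Q[G/U_j]=0$ in the rational representation ring, the alternating product of the corresponding determinants telescopes, leaving only contributions from the discrepancy between rational and integral structures, which are $p$-adically trivial for $p\nmid \#G$ by Proposition \ref{prop:intro1} (or Theorem \ref{thm:noMackeyFunctor}) applied degree by degree — more precisely, one shows each $\Reg_i(X/U_j)^{2n_j}$-product is the index of one $\Z[1/\#G]$-lattice in another, hence a rational number supported at $\#G$. For part (2), I would work purely rationally: the class of $\prod_j \Reg_i(X/U_j)^{2n_j}$ in $\Q^\times/(\Q^\times)^2$ is the discriminant of the invariant inner product restricted to $e_{U_j}\Lambda_i\otimes\Q = H_i(X,\Q)^{U_j}$, summed with signs $n_j$; since $\sum_j n_j\Q[G/U_j]=0$, the dependence on the choice of $\Q$-basis (hence on the lattice $\Lambda_i$) cancels in $\Q^\times/(\Q^\times)^2$, and what remains depends only on the pair $(H_i(X,\Q), \langle\,,\,\rangle_G)$; one then checks the inner product itself is determined up to the relevant equivalence by the $G$-module structure (using that $H_i(X,\Q)$ carries essentially a unique $G$-invariant form up to the square classes that matter, because Poincaré duality pairs $H_i$ with $H_{d-i}$ and the form is built from this).

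For part (3), the integral refinement, I would use Poincaré duality $H_{d-1}(X,\Z)\cong H^1(X,\Z)\cong H_1(X,\Z)^\vee$ together with the universal coefficient theorem to replace $H_{d-1}$ by the dual of $H_1$; the hypothesis $p\nmid \#U_j^{\ab}$ is exactly what guarantees that the restriction map $H_1(X/U_j,\Z_{(p)})\to H_1(X,\Z_{(p)})^{U_j}$ is an isomorphism after inverting nothing further (the obstruction lives in $H_1(U_j,\cdot)$, a quotient of $U_j^{\ab}\otimes(\cdot)$). Then $\Reg_{d-1}(X/U_j)$ is literally the covolume of $e_{U_j}(\Lambda_{d-1}\otimes\Z_{(p)})$, and the $p$-part of $\prod_j \Reg_{d-1}(X/U_j)^{2n_j}$ is a product of determinants of $G$-equivariant endomorphisms of $\Lambda_{d-1}\otimes\Z_{(p)}$, hence a $\Z_{(p)}$-module invariant of $H_{d-1}(X,\Z_{(p)})$; the $G$-relation makes the ambient (non-invariant) determinants cancel. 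I expect the main obstacle to be step (2)'s claim that the invariant inner product, not just the $G$-module, is pinned down up to squares — this requires a careful argument that the canonical form coming from the Riemannian metric and Poincaré duality is, in the relevant sense, intrinsic to $X\to Y$ and compatible with the permutation-module decomposition, so that only the representation-theoretic data survives; handling the prime $2$ and the non-uniqueness of quadratic forms there is where the real care is needed, and is presumably why part (2) is only asserted modulo squares while part (3) needs the hypothesis on $U_j^{\ab}$.
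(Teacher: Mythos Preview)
Your overall architecture is the same as the paper's: identify $H_i(X/U_j,\bR)$ with $H_i(X,\bR)^{U_j}$ via transfer/pullback, observe that these maps are similitudes with factor $\#U_j$ for the canonical inner products (this is Lemma~\ref{lem:simil}), and hence express $\Reg_i(X/U_j)^2$ as $\det\bigl(\tfrac{1}{\#U_j}\langle\cdot,\cdot\rangle\mid \modtors{H_i(X,\Z)^{U_j}}\bigr)$ times a correction factor $\lambda_i(X,U_j)^{-2}$ measuring the failure of integral descent (this is Theorem~\ref{thm:regcomparison}). Part~(3) then follows exactly along your lines: the obstruction to $H^{1}(X/U_j,\Z_{(p)})\to H^{1}(X,\Z_{(p)})^{U_j}$ being an isomorphism lives in $H^2(U_j,\Z)\cong U_j^{\ab}$ via the Cartan--Leray spectral sequence (Corollary~\ref{cor:regcomparison}), so your use of the hypothesis $p\nmid\#U_j^{\ab}$ is correct.

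The genuine gap is in part~(2), and you have correctly located it yourself. You argue that the alternating product of determinants depends only on the pair $(H_i(X,\Q),\langle\cdot,\cdot\rangle_G)$, and then propose to show that the $G$-invariant form is ``essentially unique up to the square classes that matter'' via Poincar\'e duality. This last step does not work: a $\Q[G]$-module in general carries many inequivalent $G$-invariant pairings (think of scaling isotypic components independently, or of non-split endomorphism algebras), and there is no reason the Riemannian pairing should be distinguished among them. The paper does not attempt this. Instead it invokes a theorem of Dokchitser--Dokchitser (Theorem~\ref{thm:regconst}, from \cite{tamroot}): the regulator constant
\[
\cC_{\Theta}(A)=\prod_j\det\Bigl(\tfrac{1}{\#U_j}\langle\cdot,\cdot\rangle\,\Big|\,\modStors{(A^{U_j})}\Bigr)^{n_j}
\]
is \emph{independent of the choice of $G$-invariant non-degenerate pairing} on $A$. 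This is a non-obvious cancellation statement, proved by analysing how a change of pairing is implemented by a $G$-automorphism and how such an automorphism interacts with the permutation modules $\Q[G/U_j]$ across a $G$-relation. Once you have this, part~(2) is immediate (Corollary~\ref{cor:modsquares}): the Riemannian pairing is just one admissible choice, so $\cC_\Theta(H_i(X,\Q))$ depends only on the $\Q[G]$-module. Your proposal would benefit from replacing the ``uniqueness of form'' step with this pairing-independence theorem; everything else you wrote is on the right track.
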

The representation theoretic invariant that appears in parts
(\ref{item:regcompQ}) and (\ref{item:regcompZ}) of the theorem
was first introduced by Dokchitser--Dokchitser in the context of regulators
of elliptic curves (see e.g. \cite{tamroot}), and is called the
regulator constant of the representation with respect to the given $G$-relation.

We actually relate the quotients of regulators to regulator constants in greater
generality, namely without the hypothesis on $p$ in part (\ref{item:regcompZ}) of Theorem
\ref{thm:intro3}. But in general, there is an ``error term'', which measures
the failure of $G$-descent for homology in the covering --
see Theorem \ref{thm:regcomparison} for the precise statement, but see also
Remark \ref{rmrk:noclue}.

Vign\'eras~\cite{vigneras} introduced another well-known construction of
isospectral manifolds, of a more arithmetic flavour.
In~\cite{torsionJL}, Calegari and Venkatesh study
torsion homology in pairs of manifolds that are of a similar nature to the
examples of Vign\'eras,
called ``Jacquet--Langlands pairs'', that are not isospectral but whose spectra
are closely related. They raise, in~\cite[Section~7.10]{torsionJL}, the question
of giving algebraic interpretations to the quotients of torsion homology and
regulators separately, analogously to Theorem~\ref{thm:intro3}. It
would also be interesting to investigate the same type of questions in the
context of the Vign\'eras examples.

In Section \ref{sec:examples}, we give examples of how Theorem \ref{thm:intro3}
allows one to deduce concrete information about the $\Q[G]$-module
structure of $H_1(X,\Q)$ from the torsion homology of quotients of $X$. It also
allows one to formulate a possible representation theoretic
line of attack on Conjecture \ref{conj:intro}, such as the following result,
which will be proved as Corollary \ref{cor:impliesconj}.
\begin{proposition}\label{prop:impliesconj}
Let $p$ be an odd prime, let $G=\GL_2(\F_p)$, and define the subgroups
$$
B=\begin{pmatrix}\F_p^\times&\F_p\\
0&\F_p^\times\end{pmatrix}
,\;\;\;\;
U=\begin{pmatrix}(\F_p^\times)^2&\F_p\\
0&\F_p^\times\end{pmatrix}
$$
of $G$.
Suppose that there exists a $G$-covering $X\rar Y$ of closed hyperbolic
$3$-manifolds such that the difference of the Betti numbers $b_1(X/U) - b_1(X/B)$
is odd. Then Conjecture \ref{conj:intro} holds for~$p$.
\end{proposition}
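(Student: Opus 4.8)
The plan is to turn the hypothesis into a statement about the $\Q[G]$-module $H_1(X,\Q)$ and then apply Theorem~\ref{thm:intro3} via a carefully chosen two-term $G$-relation whose regulator constant detects that statement modulo~$2$. Since $p$ is odd, $[B:U]=2$, so $\C[G/U]-\C[G/B]=\ind_B^G\epsilon$, where $\epsilon$ is the nontrivial character of $B/U\cong\Z/2\Z$; concretely $\epsilon$ sends an upper-triangular matrix with diagonal entries $(a,d)$ to $\eta(a)$, with $\eta$ the quadratic character of $\F_p^\times$. Thus $\rho:=\ind_B^G\epsilon$ is the principal series representation attached to the pair $(\eta,\mathbf 1)$: it is irreducible of dimension $p+1$ (as $\eta\neq\mathbf 1$) and realisable over $\Q$ (since $\epsilon$ takes values $\pm1$). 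Because $G$ acts freely on $X$, for each subgroup $H\le G$ one has $b_1(X/H)=\dim_\Q H_1(X,\Q)^H=\langle\C[G/H],H_1(X,\Q)\rangle_G$, so the assumption that $b_1(X/U)-b_1(X/B)$ is odd says exactly that $\rho$ occurs with odd multiplicity in $H_1(X,\Q)$.

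Next I would introduce the ``mirror'' subgroup $U'=\bigl(\begin{smallmatrix}\F_p^\times&\F_p\\0&(\F_p^\times)^2\end{smallmatrix}\bigr)$ of $B$. Here $\C[G/U']-\C[G/B]=\ind_B^G\epsilon'$ with $\epsilon'\colon(a,d)\mapsto\eta(d)$, which is the principal series attached to $(\mathbf 1,\eta)$ and is isomorphic to $\rho$ by the Weyl-group symmetry of principal series. Hence $\C[G/U']\cong\C[G/U]$, so $\Theta:=U-U'$ is a $G$-relation, and Sunada's construction applied to $\Theta$ yields strongly isospectral closed hyperbolic $3$-manifolds $M_1:=X/U$ and $M_2:=X/U'$. (The subgroups $U,U'$ are not conjugate in $G$ — each contains the unipotent radical of $B$, hence stabilises a unique line and lies in $B$, yet they are distinct normal subgroups of $B$ — so $M_1\neq M_2$.)

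The heart of the argument, and the step I expect to take the most work, is the computation of the regulator constant $\mathcal{C}_\Theta$. The only irreducible $\Q[G]$-modules with nonzero $U$- or $U'$-invariants are the three constituents $\mathbf 1,\St,\rho$ of $\C[G/U]\cong\C[G/U']$. On $\mathbf 1$ and on $\St$ one finds $\mathcal{C}_\Theta\equiv1$, because $U$ and $U'$ act on $\bP^1(\F_p)$ with the same orbit structure (both lie in $B$, fix the same point, and act transitively on the remaining $p$ points). For $\rho$, working in the model $\rho=\{f\colon G\to\Q\mid f(bg)=\epsilon(b)f(g)\ \forall b\in B\}$ with the $G$-stable lattice $L$ of $\Z$-valued functions and the invariant symmetric form $\langle f_1,f_2\rangle=\sum_{Bg\in B\backslash G}f_1(g)f_2(g)$, the invariant line $\rho^U$ is spanned by the function supported on the closed Bruhat cell $B$, a primitive vector of $L$ of self-pairing $1$, while $\rho^{U'}$ is spanned by a function supported on the open cell $BwB$, a primitive vector of self-pairing $p$. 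As $|U|=|U'|$, this gives $\mathcal{C}_\Theta(\rho)\equiv p\pmod{(\Q^\times)^2}$, and by multiplicativity $\mathcal{C}_\Theta(V)\equiv p^{\langle\rho,V\rangle}\pmod{(\Q^\times)^2}$ for every $\Q[G]$-module $V$. In particular $v_p\bigl(\mathcal{C}_\Theta(H_1(X,\Q))\bigr)$ is odd.

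Finally I would combine this with Theorem~\ref{thm:intro3} and the Cheeger--M\"uller relation. Since $p$ is odd, $\#U^{\ab}=\#(U')^{\ab}=(p-1)^2/2$ is prime to $p$, so by Theorem~\ref{thm:intro3}\,(2)--(3) (applicable since $\#U^{\ab}=\#(U')^{\ab}$ is prime to $p$, the relevant invariant being the regulator constant) one has $\prod_j\Reg_1(X/U_j)^{2n_j}=\Reg_1(M_1)^2\Reg_1(M_2)^{-2}\equiv\mathcal{C}_\Theta(H_1(X,\Q))\pmod{(\Q^\times)^2}$, which therefore has odd $p$-adic valuation. On the other hand, applying~(\ref{eq:CheegerMuller}) to the strongly isospectral pair $M_1,M_2$, and using that closed hyperbolic $3$-manifolds have torsion homology only in degree $1$, that Poincar\'e duality gives $\Reg_i(M)\Reg_{3-i}(M)=1$, and that $\vol(M_1)=\vol(M_2)$, all terms collapse to
\[
\frac{\#H_1(M_1,\Z)_{\tors}}{\#H_1(M_2,\Z)_{\tors}}=\bigl(\Reg_1(M_1)/\Reg_1(M_2)\bigr)^2 .
\]
Comparing $p$-adic valuations, $v_p(\#H_1(M_1,\Z)_{\tors})-v_p(\#H_1(M_2,\Z)_{\tors})$ is odd, hence nonzero, so $\#H_1(M_1,\Z)[p^\infty]\neq\#H_1(M_2,\Z)[p^\infty]$; this is Conjecture~\ref{conj:intro} for~$p$. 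Besides the regulator-constant computation, the only point needing care is the collapse of~(\ref{eq:CheegerMuller}) — keeping track of the degree-$0$ and degree-$3$ volume contributions and of the Poincar\'e-duality identity $\Reg_2=\Reg_1^{-1}$ — but this is routine.
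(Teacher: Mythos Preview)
Your proposal is correct and follows essentially the same route as the paper: the same two-term relation $\Theta=U-U'$ (the paper's $U_2-U_1$ from Example~\ref{ex:p-Gassman}), the same identification of $b_1(X/U)-b_1(X/B)$ with the multiplicity of the principal series $I=\rho(\eta,\triv)$ via Frobenius reciprocity, the same regulator-constant statement $\cC_\Theta(I)\equiv p$ and $\cC_\Theta\equiv 1$ on the other constituents, and the same endgame combining Corollary~\ref{cor:modsquares} with the Cheeger--M\"uller collapse for hyperbolic $3$-manifolds (this is exactly Corollary~\ref{cor:impliesconj}).

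The one genuine methodological difference is in how you compute $\cC_\Theta$. The paper (Proposition~\ref{prop:regconstpGassman}) uses Frobenius reciprocity for regulator constants, $\cC_\Theta(\Ind_B^G r)=\cC_{\Res_B^G\Theta}(r)$, together with the Bruhat decomposition to write $\Res_B^G\Theta$ explicitly, reducing everything to one-dimensional characters of $B$. You instead work directly in the induced lattice model of $\rho$ and locate the primitive $U$- and $U'$-fixed vectors on the closed and open Bruhat cells; this is a perfectly valid and pleasantly concrete alternative, and your self-pairing values $1$ and $p$ are correct. Either way one lands on $\cC_\Theta(\rho)\equiv p\pmod{(\Q^\times)^2}$. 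A small remark: for the final step you only need part~(\ref{item:regcompQ}) of Theorem~\ref{thm:intro3} (equivalently Corollary~\ref{cor:modsquares}); the coprimality of $p$ with $\#U^{\ab}$ that you check is not actually required for the mod-squares conclusion.
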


To fully appreciate the representation theoretic nature of Proposition
\ref{prop:impliesconj}, see the full statement of Corollary \ref{cor:impliesconj}.

The structure of the paper is as follows. In Section \ref{sec:Brauer} we
recall the formalism of Burnside groups, representation groups, and $G$-relations,
and the definition of regulator constants. In Section
\ref{sec:manifolds} we investigate the behaviour of homological invariants
of Riemannian manifolds in $G$-relations, and prove Proposition~\ref{prop:intro1}
and Theorem~\ref{thm:intro3}. In Section \ref{sec:regconst} we compute the regulator
constants of the rational representations of $\GL_2(\F_p)$ for odd primes $p$,
and of $(\Z/8\Z)\rtimes (\Z/8\Z)^\times$, with respect to certain $G$-relations.
These computations will allow us to deduce concrete information
about the $\Q[G]$-module structure of the rational homology of $G$-coverings
from the torsion homology of intermediate coverings.
Together with Theorem \ref{thm:regcomparison},
these calculations will also imply Proposition \ref{prop:impliesconj}.
We prove Proposition~\ref{prop:intro2} by a direct computer search, and
Section~\ref{sec:calc} is devoted to the methods and algorithms used to that
end. In Section~\ref{sec:examples} we have collected some interesting examples,
illustrating the various phenomena that we investigate here.

All our Riemannian manifolds will be assumed to be finite-dimensional,
oriented, and closed. By an automorphism of a Riemannian manifold
we mean an orientation preserving diffeomorphism from the manifold to
itself that is a local isometry. By a hyperbolic $3$-manifold, we mean a
quotient of the hyperbolic $3$-space by a discrete subgroup of
orientation preserving isometries.
If $p$ is a prime number, we will write $\Z_{(p)}$ for the localisation
of~$\Z$ at $p$, i.e. the subring $\{\frac ab\colon p\nmid b\}$ of $\Q$.
For a rational number $x=p^n\frac ab$, with $n\in \Z$ and $p\nmid ab$,
the $p$-adic valuation of $x$ is defined to be $\ord_p(x) = n$.
When~$M$ is a~$\Z$-module, we write~$M_{\tors}$ for the torsion submodule of~$M$
and~$\modtors{M} = M/M_{\tors}$ for the torsion-free quotient of~$M$.

\begin{acknowledgements}
We would like to thank Nicolas Bergeron, Nathan Dunfield, Derek Holt, Emilio Lauret,
and Karen Vogtmann for helpful discussions, and the anonymous referee for
suggestions that improved the exposition.
\end{acknowledgements}

\section{$G$-relations and regulator constants}\label{sec:Brauer}
We recall some standard definitions, for which we refer to \cite[Ch. 11]{CR}. By
``module'' we will always mean a finitely generated left module.
\begin{definition}
Let $G$ be a finite group. The \emph{Burnside group} of $G$ is the free
abelian group on isomorphism classes of transitive
$G$-sets.
\end{definition}

The set of transitive $G$-sets is in bijection with the set
of conjugacy classes of subgroups of $G$ via the map that assigns to the
subgroup $U$ the set of cosets~$G/U$. Using this identification, we will
represent elements of the Burnside group as formal sums $\sum_j n_j U_j$, where
$n_j\in \bZ$ and $U_j\leq G$.

\begin{definition}
Let $R$ be a domain. The \emph{representation group} of~$G$ over~$R$
is the free abelian group on isomorphism classes of $R$-free
indecomposable~$R[G]$-modules, where $R[G]$ is the group ring of $G$ over $R$.
\end{definition}

Let $G$ be a finite group and $R$ be a domain. 
We write $\triv$ for the free $R$-module of rank~$1$ with trivial $G$-action.
We have a natural group homomorphism~$\Psi_{R[G]}$ from the Burnside group of
$G$ to the representation group of~$G$ over~$R$,
which sends a $G$-set $X$ to the permutation module $R[X]$ with
an~$R$-basis indexed by the elements of $X$, and with the $R$-linear $G$-action
given by permutations of the basis.

\begin{definition}\label{defn:relation}
Let $R$ be a domain.
An \emph{$R[G]$-relation} is an element of the kernel of $\Psi_{R[G]}$.
A $\Q[G]$-relation will be referred to simply as a $G$-relation.
\end{definition}

\begin{example}\label{ex:dihedral}
Let $p$ be a prime, and let
$$
G=\langle \sigma,\tau\colon\sigma^p=\tau^2=\id,\tau\sigma\tau=\sigma^{-1}\rangle
$$
be a dihedral group of order $2p$. Then $\Theta=1-2C_2-C_p+2G$ is a $G$-relation.
Concretely, this means that there is an isomorphism of linear permutation representations
of $G$ over $\Q$,
$$
\Q[G/1]\oplus \Q[G/G]^{\oplus 2}\cong \Q[G/C_2]^{\oplus 2} \oplus \Q[G/C_p].
$$
Moreover, for every prime $q\neq p$, the relation $\Theta$ is in fact a
$\Z_{(q)}[G]$-relation, but it is not a $\Z_{(p)}[G]$-relation -- see
\cite[Proof of Proposition 3.9]{Bartel}.
\end{example}

\begin{example}\label{ex:2-Gassman}
Let $G$ be the affine linear group over $\Z/8\Z$, i.e. the group of linear
transformations $T_{a,b}\colon x\mapsto ax+b$ of $\Z/8\Z$, where $a\in (\Z/8\Z)^{\times}$
and $b\in \Z/8\Z$. Consider the subgroups $U_1=\langle T_{a,0}\colon a \in (\Z/8\Z)^\times\rangle$
and $U_2= \langle T_{-1,0}, T_{3,4}\rangle$.
The group $G$ is isomorphic to the semi-direct product $\Z/8\Z\rtimes (\Z/8\Z)^\times$,
and the subgroups $U_1$ and $U_2$ are both isomorphic to $C_2\times C_2$.
Then $U_1-U_2$ is a $\Z_{(p)}[G]$-relation for every odd prime $p$, but is not
a $\Z_{(2)}[G]$-relation, as can be deduced from a direct character computation and
Lemma \ref{lem:Grelations} below.
\end{example}

\begin{example}\label{ex:p-Gassman}
Let $p$ be an odd prime, and
let $G=\GL_2(\bF_p)$. Consider the two subgroups
$$
U_1=\begin{pmatrix}\bF_p^{\times} & \bF_p \\ 0 & (\bF_p^{\times})^2\end{pmatrix},\;\;\;\;
U_2=\begin{pmatrix}(\bF_p^{\times})^2 & \bF_p \\ 0 & \bF_p^{\times}\end{pmatrix}.
$$
Then for every prime~$q$, $U_1-U_2$ is a $\Z_{(q)}[G]$-relation if and only
if~$q\neq p$.

Observe that $U_1\cap U_2$ contains the subgroup
$N=\{\left(\begin{smallmatrix}a&0\\ 0&a\end{smallmatrix}\right)\colon a\in (\F_p^\times)^2\}$,
which is central in $G$. Set $\bar{G}=G/N$, and $\bar{U}_j=U_j/N$ for $j=1$ and
$2$. We then get the $\bar{G}$-relation $\bar{U}_1-\bar{U}_2$.
Moreover, the triple $(\bar{G},\bar{U}_1,\bar{U}_1)$ minimises the index
$(\Gamma\colon S_1)$ among all triples $(\Gamma,S_1,S_2)$, where $\Gamma$
is a finite group, and $S_1-S_2$ is a $\Gamma$-relation that is not a
$\Z_{(p)}[\Gamma]$-relation (see \cite{deSmit1}).
\end{example}

\begin{lemma}\label{lem:Grelations}
Let $G$ be a finite group, let $p$ be a prime number not dividing~$\#G$, and
let $\Theta$ be an element of the Burnside group of $G$. Then the following are
equivalent:
\begin{enumerate}
\item $\Theta$ is a $\C[G]$-relation;
\item $\Theta$ is a $\Q[G]$-relation;
\item $\Theta$ is a $\Z_{(p)}[G]$-relation;
\item $\Theta$ is an $\F_p[G]$-relation.
\end{enumerate}
\end{lemma}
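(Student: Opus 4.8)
The plan is to prove the cycle of implications $(1)\Rightarrow(2)\Rightarrow(3)\Rightarrow(4)\Rightarrow(1)$, exploiting the fact that $p\nmid\#G$ makes both $\Z_{(p)}[G]$ and $\F_p[G]$ semisimple (or close to it), so that the relevant Grothendieck groups of modules agree with those of characters. The implication $(1)\Rightarrow(2)$ is immediate: if $\C[X]\cong\C[Y]$ as $\C[G]$-modules where $\Theta=\sum n_j U_j$ corresponds to virtual $G$-sets $X-Y$, then comparing characters (which take values in $\Q$, indeed in $\Z$, on permutation modules) shows $\Q[X]$ and $\Q[Y]$ have the same character, and since $\Q[G]$-modules are determined by their characters, $\Q[X]\cong\Q[Y]$. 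Conversely $(2)\Rightarrow(1)$ is just extension of scalars from $\Q$ to $\C$, so in fact $(1)\Leftrightarrow(2)$ holds for any finite group and is not where the hypothesis on $p$ is needed.

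For $(2)\Rightarrow(3)$: a $\Q[G]$-relation means $\Q[X]\cong\Q[Y]$. I would like to conclude $\Z_{(p)}[X]\cong\Z_{(p)}[Y]$. Since $p\nmid\#G$, the order of $G$ is invertible in $\Z_{(p)}$, so by Maschke's theorem $\Z_{(p)}[G]$ is a separable $\Z_{(p)}$-order, and every $\Z_{(p)}[G]$-lattice is projective; moreover two $\Z_{(p)}[G]$-lattices are isomorphic if and only if they become isomorphic after $\otimes_{\Z_{(p)}}\Q$ — this is a standard consequence of the fact that over a complete (or here, it suffices Henselian, or one can reduce to the complete case $\Z_p$) discrete valuation ring with residue characteristic not dividing $\#G$, the decomposition map is an isomorphism and idempotents lift, so the Krull–Schmidt classes are detected rationally. (Alternatively: $\Z_{(p)}[G]$ is a maximal order because $p\nmid\#G$, and lattices over a maximal order in a semisimple algebra are determined by their rational span together with Steinitz-type data that is trivial over the PID-like ring $\Z_{(p)}$.) Applying this to the permutation lattices $\Z_{(p)}[X]$ and $\Z_{(p)}[Y]$ gives the claim.

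For $(3)\Rightarrow(4)$: reduce modulo $p$, i.e.\ apply $-\otimes_{\Z_{(p)}}\F_p$; an isomorphism $\Z_{(p)}[X]\cong\Z_{(p)}[Y]$ of $\Z_{(p)}[G]$-modules yields $\F_p[X]\cong\F_p[Y]$ of $\F_p[G]$-modules, which is exactly the assertion that $\Theta$ is an $\F_p[G]$-relation. Finally $(4)\Rightarrow(1)$ (or equivalently $(4)\Rightarrow(2)$): since $p\nmid\#G$, the ring $\F_p[G]$ is semisimple, and the Brauer character (or just the ordinary character reduced mod $p$) of a permutation module $\F_p[X]$ at $p$-regular elements — which here is \emph{all} elements — is the reduction of the permutation character, whose values are the integers $\#\{x\in X: g\cdot x = x\}$. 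An isomorphism $\F_p[X]\cong\F_p[Y]$ forces these Brauer characters to agree, hence $\#\mathrm{Fix}_X(g)\equiv\#\mathrm{Fix}_Y(g)\pmod p$ for all $g$; but these fixed-point counts are bounded by $\#X=\#Y\le\#G\cdot(\text{number of orbits})$, and more to the point one argues via the bijection between $\C[G]$-module classes of permutation modules and their characters together with the injectivity of reduction mod $p$ on the relevant character lattice (the decomposition map being an isomorphism for $p\nmid\#G$) to lift the congruence to an equality of permutation characters, whence $(1)$. The main obstacle is the careful handling of $(2)\Rightarrow(3)$ and $(4)\Rightarrow(1)$: one must invoke the right integral/modular representation-theoretic fact — that for $p\nmid\#G$ the decomposition map $G_0(\Q[G])\to G_0(\F_p[G])$ is an isomorphism and that $\Z_{(p)}[G]$-lattices in a fixed rational module are unique up to isomorphism — rather than anything special to permutation modules; everything else is bookkeeping with characters and extension/reduction of scalars. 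I would cite \cite[Ch.~11]{CR} (or the sections on maximal orders and on the decomposition map) for these inputs.
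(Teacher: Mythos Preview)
Your argument is correct and follows the standard route that the paper simply cites (to \cite{maxorders} for $(1)\Leftrightarrow(2)$ and to \cite{Benson} for the remaining equivalences). One clarification on your $(4)\Rightarrow(1)$: the Brauer character of $\F_p[X]$ at a $p$-regular element $g$ is literally the integer $\#\mathrm{Fix}_X(g)$, not its residue mod~$p$, so equality of Brauer characters already yields $\#\mathrm{Fix}_X(g)=\#\mathrm{Fix}_Y(g)$ for all $g$ and hence $(1)$ directly --- you have conflated Brauer characters with $\F_p$-valued traces, though your fallback via the decomposition map (an isomorphism when $p\nmid\#G$) is also a valid way to finish.
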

\begin{proof}
The equivalence of (1) and (2) follows from the fact that if~$\rho_1$
and $\rho_2$ are $\Q[G]$-modules, then
$\dim_{\Q}\Hom(\rho_1,\rho_2)=\dim_{\C}\Hom(\C\otimes_{\Q} \rho_1,\C\otimes_{\Q}\rho_2)$
(see \cite[\S 2e]{maxorders}).
The remaining equivalences are proven in \cite[Chapter 5]{Benson}.
\end{proof}

\begin{notation}\label{not:BurnsideFunctions}
Let $f$ be any function on the set of conjugacy classes of subgroups of a
finite group $G$ with values in an abelian group~$A$.
Then $f$ extends to a unique group homomorphism from the Burnside group
of $G$ to $A$, defined by $f(\sum_j n_jU_j)=\prod_j f(U_j)^{n_j}$, where
$U_j$ are subgroups of~$G$ and~$n_j\in \Z$.
\end{notation}

Let $S$ be a subring of $\bR$ that is a PID. The main examples we are thinking of
are $\Z$, the localisation $\Z_{(p)}$ where $p$ is a prime, and $\Q$.
Given an~$S[G]$-module $A$, let $A_{\Stors}$ denote the $S[G]$-submodule
consisting of $S$-torsion elements of $A$, and let $\modStors{A}=A/A_{\Stors}$.
Since $S$ is a PID, for every~$S[G]$-module $A$, the quotient~$\modStors{A}$
is an $S$-free $S[G]$-module.
Moreover, $\bR\otimes_S A$ is \emph{self-dual},
i.e. it is isomorphic to the $\bR[G]$-module $\Hom_{S}(A,\bR)$. Thus
there exists an $\bR$-valued $S$-bilinear $G$-invariant
non-degenerate pairing on $\modStors{A}$. From now on, we will just say \emph{pairing
on $A$} when we mean an $\bR$-valued $S$-bilinear $G$-invariant pairing on $A$ that
is non-degenerate on~$\modStors{A}$.

\begin{definition}
Let $G$ be a finite group, let $S$ a subring of $\bR$ that is a PID, let $R$
be a subring of $\Q$, and let $A$ be an $S[G]$-module.
Let $\langle\cdot,\cdot\rangle$ be a pairing on $A$, and
let $\Theta=\sum_j n_jU_j$ be an $R[G]$-relation.
The \emph{regulator constant}
of~$A$ with respect to $\Theta$ is defined as
$$
\cC_{\Theta}(A) = \prod_j \det\left(\frac{1}{\#U_j}\langle\cdot,\cdot
\rangle|\modStors{(A^{U_j})}\right)^{n_j}\in \bR^\times/(S^\times)^2,
$$
where the $j$-th determinant is computed with respect to any
$S$-basis on $\modStors{(A^{U_j})}$. 
The class in $\bR^\times/(S^\times)^2$ of each determinant is independent of the
choice of basis.
\end{definition}
\begin{theorem}\label{thm:regconst}
The value of $\cC_{\Theta}(A)$ is independent of the pairing $\langle\cdot,\cdot\rangle$
on~$A$.
\end{theorem}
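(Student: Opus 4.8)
The plan is to show that changing the pairing does not change the class of $\cC_\Theta(A)$ in $\bR^\times/(S^\times)^2$. Since the space of pairings on a fixed $A$ is an affine (in fact convex) subset of a real vector space, any two pairings $\langle\cdot,\cdot\rangle_0$ and $\langle\cdot,\cdot\rangle_1$ on $A$ can be joined by the segment $\langle\cdot,\cdot\rangle_t = (1-t)\langle\cdot,\cdot\rangle_0 + t\langle\cdot,\cdot\rangle_1$ for $t\in[0,1]$; each $\langle\cdot,\cdot\rangle_t$ is still $S$-bilinear, $G$-invariant, and $\bR$-valued, but could a priori degenerate on $\modStors A$ for some intermediate $t$. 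First I would deal with this: the function $t\mapsto \prod_j \det\bigl(\tfrac1{\#U_j}\langle\cdot,\cdot\rangle_t|_{\modStors{(A^{U_j})}}\bigr)^{n_j}$ is a rational function of $t$ (a ratio of products of polynomial determinants), and I want to argue it is constant. The standard device is to complexify: view the pairings as elements of $\Hom_{\C[G]}(\C\otimes_S A, \Hom_\C(\C\otimes_S A,\C))$, pass to an algebraically closed field so that the set of nondegenerate pairings is Zariski-dense and connected, and then invoke a continuity/algebraicity argument.

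The cleanest route, which I expect the authors take, is the one used by T. and V. Dokchitser for regulator constants of elliptic curves: reduce to the case of pairings that are \emph{close}. Concretely, if $\langle\cdot,\cdot\rangle_0$ is a fixed nondegenerate pairing on $A$ and $\langle\cdot,\cdot\rangle$ is another, write $\langle x,y\rangle = \langle \phi x, y\rangle_0$ for a unique $G$-equivariant $\bR$-linear self-adjoint (with respect to $\langle\cdot,\cdot\rangle_0$) automorphism $\phi$ of $\bR\otimes_S A$. On each $A^{U_j}$, the matrix of $\langle\cdot,\cdot\rangle$ in an $S$-basis of $\modStors{(A^{U_j})}$ is $(\text{matrix of }\phi|_{\modStors{(A^{U_j})}})\cdot(\text{matrix of }\langle\cdot,\cdot\rangle_0)$, so
\[
\cC_\Theta(A)\big|_{\langle\cdot,\cdot\rangle} = \Bigl(\prod_j \det\bigl(\phi|_{\modStors{(A^{U_j})}}\bigr)^{n_j}\Bigr)\cdot \cC_\Theta(A)\big|_{\langle\cdot,\cdot\rangle_0}
\]
in $\bR^\times$. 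So the theorem reduces to the claim that the ``correction factor'' $\prod_j \det(\phi|_{\modStors{(A^{U_j})}})^{n_j}$ lies in $(S^\times)^2$ for every such $\phi$, and in fact equals $1$.

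To prove that claim, decompose $\bR\otimes_S A$ into isotypic components for $G$; since $\phi$ is $G$-equivariant it preserves each isotypic component, and $A^{U_j}$ is the $\modStors{\phantom{x}}$-part of the sum of those components on which $U_j$ acts with a trivial subrepresentation. The point is then purely representation-theoretic: $\det(\phi|_{(A^{U_j})\otimes\bR}) = \prod_\rho \det(\phi|_{\text{Hom}_G(\rho, \bR\otimes A)})^{\dim\rho^{U_j}}$, and the function $U\mapsto \dim\rho^{U}$ extended to the Burnside group (Notation \ref{not:BurnsideFunctions}) kills $\Theta$ because $\Theta$ is an $R[G]$-relation hence a $\C[G]$-relation — meaning exactly that $\sum_j n_j \dim\rho^{U_j}=0$ for every irreducible $\rho$. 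Hence the correction factor is $\prod_\rho (\cdots)^{0} = 1$. I would present this carefully keeping track of whether one works over $\bR$ or must extend scalars to $\C$ to decompose, noting the equivalence of (1) and (2) in Lemma \ref{lem:Grelations} is not needed here since we already assume $\Theta$ is an $R[G]\subseteq\C[G]$-relation.

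The main obstacle is purely bookkeeping: the determinants in the definition are taken with respect to an $S$-\emph{lattice} basis of $\modStors{(A^{U_j})}$, not an $\bR$-basis, so I must make sure the ``correction factor'' computation is genuinely taking place in $\bR^\times$ (where $\det$ of the $G$-equivariant $\phi$ restricted to an isotypic multiplicity space is well defined) and only afterwards reduce modulo $(S^\times)^2$; and I must confirm that $\det(\phi|_{\modStors{(A^{U_j})\otimes\bR}})$ depends only on the $\bR[G]$-module, not on the lattice — which is immediate since $\det$ is basis-independent. Once the isotypic decomposition and the Burnside-group extension of $\rho\mapsto\dim\rho^{U_j}$ are in place, the vanishing is automatic from the relation, so the argument is short.
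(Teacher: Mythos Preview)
The paper does not give its own proof; it simply cites \cite[Theorem 2.17]{tamroot}. Your argument is correct and is essentially the one in that reference: write the second pairing as $\langle x,y\rangle=\langle \phi x,y\rangle_0$ for a $G$-equivariant $\bR$-linear automorphism $\phi$ of $\bR\otimes_S A$, so that the two computations of $\cC_\Theta(A)$ differ by the factor $\prod_j \det\bigl(\phi|_{(\bR\otimes A)^{U_j}}\bigr)^{n_j}$; then complexify, decompose into $\C[G]$-isotypic components, and use that an $R[G]$-relation is a $\C[G]$-relation, i.e.\ $\sum_j n_j\dim_\C\sigma^{U_j}=0$ for every irreducible $\sigma$, to conclude that this factor equals $1$ (not merely a square unit).

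Two minor remarks. First, the pairings in the paper's definition are not assumed symmetric, so $\phi$ need not be self-adjoint; but you never use self-adjointness, only $G$-equivariance and invertibility, both of which follow from $G$-invariance and nondegeneracy of the two pairings. Second, your anticipated caution about working over $\bR$ versus $\C$ is warranted: for an $\bR$-irreducible $\rho$ of complex or quaternionic type the exponent appearing is not literally $\dim_\bR\rho^{U_j}$, so the cleanest route is exactly the one you sketch at the end---pass to $\C$, where $\det_\bR(\phi|_{V^{U_j}})=\det_\C(\phi_\C|_{V_\C^{U_j}})$ and the isotypic decomposition gives the formula $\prod_\sigma \det_\C(\psi_\sigma)^{\dim_\C\sigma^{U_j}}$ on the nose. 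Your opening paragraph about convex interpolation of pairings is unnecessary once you have the $\phi$-argument, and can be dropped.
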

\begin{proof}
See \cite[Theorem 2.17]{tamroot}.
\end{proof}

It follows from Theorem \ref{thm:regconst} that if $S=\Z$, then
$\cC_{\Theta}(A)\in \Q^\times$ is well-defined. Indeed, $(\Z^{\times})^2=\{1\}$,
and the pairing can always be chosen to take values in $\Q$. Similarly,
if $S=\Z_{(p)}$, then $\ord_p(\cC_{\Theta}(A))\in \Z$ is well-defined,
while if $S=\Q$, then $\cC_{\Theta}(A)$ defines a class in $\Q^\times/(\Q^\times)^2$.

\begin{corollary}\label{cor:multiplicative}
Let $A_1$, $A_2$ be $S[G]$-modules, and $\Theta$ a $G$-relation. Then
$\cC_{\Theta}(A_1\oplus A_2) = \cC_{\Theta}(A_1)\cC_{\Theta}(A_2)$.
\end{corollary}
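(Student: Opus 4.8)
The plan is to reduce the claim to the definition of the regulator constant together with the elementary observation that taking $U_j$-invariants commutes with direct sums and that the natural pairing one builds on a direct sum can be taken to be the orthogonal direct sum of pairings on the summands. First I would fix a pairing $\langle\cdot,\cdot\rangle_1$ on $A_1$ and a pairing $\langle\cdot,\cdot\rangle_2$ on $A_2$; by Theorem~\ref{thm:regconst} the values $\cC_\Theta(A_1)$ and $\cC_\Theta(A_2)$ do not depend on these choices. Then on $A_1\oplus A_2$ I would take the pairing $\langle\cdot,\cdot\rangle = \langle\cdot,\cdot\rangle_1\perp\langle\cdot,\cdot\rangle_2$, that is, $\langle(a_1,a_2),(b_1,b_2)\rangle = \langle a_1,b_1\rangle_1+\langle a_2,b_2\rangle_2$. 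One checks immediately that this is $\bR$-valued, $S$-bilinear, $G$-invariant, and non-degenerate on $\modStors{(A_1\oplus A_2)}$, since $S$-torsion in a direct sum is the direct sum of the $S$-torsion submodules, so $\modStors{(A_1\oplus A_2)}\cong \modStors{A_1}\oplus\modStors{A_2}$ and the pairing restricts to the given non-degenerate pairings on each factor.

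Next I would observe that for each subgroup $U_j\leq G$ we have $(A_1\oplus A_2)^{U_j} = A_1^{U_j}\oplus A_2^{U_j}$, and hence $\modStors{\bigl((A_1\oplus A_2)^{U_j}\bigr)} \cong \modStors{(A_1^{U_j})}\oplus \modStors{(A_2^{U_j})}$. Choosing an $S$-basis of the left-hand module by concatenating an $S$-basis of $\modStors{(A_1^{U_j})}$ with one of $\modStors{(A_2^{U_j})}$, the Gram matrix of $\tfrac{1}{\#U_j}\langle\cdot,\cdot\rangle$ is block-diagonal with blocks the Gram matrices of $\tfrac{1}{\#U_j}\langle\cdot,\cdot\rangle_1|\modStors{(A_1^{U_j})}$ and $\tfrac{1}{\#U_j}\langle\cdot,\cdot\rangle_2|\modStors{(A_2^{U_j})}$ — this is precisely because the pairing was chosen orthogonal. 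Therefore the determinant of the big Gram matrix is the product of the determinants of the two blocks, in $\bR^\times/(S^\times)^2$.

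Finally I would raise this identity to the power $n_j$, take the product over $j$, and regroup the factors to obtain
\[
\cC_\Theta(A_1\oplus A_2) = \prod_j \det\bigl(\tfrac{1}{\#U_j}\langle\cdot,\cdot\rangle_1|\modStors{(A_1^{U_j})}\bigr)^{n_j}\cdot \prod_j \det\bigl(\tfrac{1}{\#U_j}\langle\cdot,\cdot\rangle_2|\modStors{(A_2^{U_j})}\bigr)^{n_j} = \cC_\Theta(A_1)\,\cC_\Theta(A_2)
\]
in $\bR^\times/(S^\times)^2$. There is no serious obstacle here; the only point that needs a word of care is the compatibility of the various $S$-basis choices with the block decomposition, and the fact that $\modStors{(-)}$ is additive and commutes with the fixed-point functor $(-)^{U_j}$, both of which are immediate since $S$ is a PID and $U_j$-invariants is just a kernel. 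The statement that one may take the pairing on $A_1\oplus A_2$ to be orthogonal is the crux, and it is justified entirely by Theorem~\ref{thm:regconst}.
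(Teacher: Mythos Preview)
Your proof is correct and follows exactly the same approach as the paper's: choose the pairing on $A_1\oplus A_2$ to be the orthogonal direct sum of pairings on the summands, so that all Gram matrices become block diagonal. The paper states this in a single sentence, and your write-up is simply a careful elaboration of that idea.
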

\begin{proof}
We can choose the pairing on $A_2\oplus A_2$ so that the direct summands are
orthogonal to each other, making all the matrices block diagonal.
\end{proof}

\begin{example}\label{ex:regconsttriv}
Let $G$ be a finite group, and let 
$\Theta=\sum_j n_j U_j$
be a $G$-relation. Then $\cC_{\Theta}(\triv) = \prod_j \#U_j^{-n_j}$.
\end{example}

\begin{lemma}\label{lem:degree0}
Let $G$ be a finite group and let $\Theta=\sum_j n_jU_j$ be a $G$-relation.
Then we have $\sum_j n_j=0$.
\end{lemma}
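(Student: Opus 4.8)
The plan is to evaluate the Burnside-group homomorphism associated to a suitable one-dimensional invariant at the relation $\Theta$, and use that a relation lies in the kernel of $\Psi_{\Q[G]}$. Concretely, I would apply Notation~\ref{not:BurnsideFunctions} to the function $f(U) = q^{[G:U]}$ for a fixed prime power $q$ (or simply $f(U) = t^{[G:U]}$ formally, but working with a numerical value is cleanest). This $f$ extends to a group homomorphism from the Burnside group of $G$ to the multiplicative group $\Q^\times$, so that $f(\Theta) = \prod_j f(U_j)^{n_j} = q^{\sum_j n_j [G:U_j]}$.

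The key observation is that $[G:U_j] = \dim_\Q \Q[G/U_j]$, so $\sum_j n_j [G:U_j]$ is exactly the $\Q$-dimension of $\Psi_{\Q[G]}(\Theta)$, interpreted as a virtual representation. Since $\Theta$ is a $G$-relation, i.e. an element of $\ker \Psi_{\Q[G]}$ by Definition~\ref{defn:relation}, we have $\Psi_{\Q[G]}(\Theta) = 0$ in the representation group of $G$ over $\Q$, and in particular its virtual dimension vanishes: $\sum_j n_j [G:U_j] = 0$. This already gives a relation among the $n_j$, but to isolate $\sum_j n_j$ itself I would instead use that $\Psi_{\Q[G]}(\Theta) = 0$ implies equality of the underlying permutation characters, and evaluate that character identity at the identity element $1 \in G$: the permutation character of $\Q[G/U_j]$ sends $1$ to $[G:U_j]$, recovering the dimension statement, whereas a cleaner route is to note that the trivial representation $\triv$ occurs in $\Q[G/U_j]$ with multiplicity exactly $1$ (by Frobenius reciprocity, $\dim \Hom_{\Q[G]}(\triv, \Q[G/U_j]) = \dim (\Q[G/U_j])^G = 1$). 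Applying the additive "multiplicity of $\triv$" functional to the identity $\Psi_{\Q[G]}(\Theta) = 0$ then yields $\sum_j n_j \cdot 1 = 0$, which is the claim.

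I expect the main (and only mild) obstacle to be bookkeeping about which functional to apply: the virtual-dimension functional gives $\sum n_j [G:U_j] = 0$, not $\sum n_j = 0$, so one must use instead the "multiplicity of the trivial summand" functional, and justify that it is well-defined and additive on the representation group — this is immediate since taking $G$-invariants $A \mapsto \dim_\Q A^G$ is additive on $\Q[G]$-modules. Alternatively, one can package this via Notation~\ref{not:BurnsideFunctions} with $f(U) = $ (the multiplicity of $\triv$ in $\Q[G/U]$) $= 1$ for all $U$; then $f$ is the constant function $1$, it extends to the homomorphism $\sum_j n_j U_j \mapsto \prod_j 1^{n_j}$ — wait, that is always $1$ and loses information, so one really does want the \emph{additive} version of the invariant, namely composing $\Psi_{\Q[G]}$ with the additive character-at-$1$ map or with $\dim(\,\cdot\,)^G$. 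I would present the argument in the additive form: the composite of $\Psi_{\Q[G]}$ with $\rho \mapsto \dim_\Q \rho^G$ sends $U_j$ to $1$ and hence sends $\Theta$ to $\sum_j n_j$; since $\Theta \in \ker \Psi_{\Q[G]}$, this is $0$.
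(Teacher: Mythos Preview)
Your proposal is correct and, once you strip away the exploratory false starts (the virtual-dimension functional and the multiplicative $f(U)=1$ detour), the argument you settle on is exactly the paper's: apply the additive functional ``multiplicity of the trivial representation'' (equivalently, inner product with the trivial character, equivalently $\rho \mapsto \dim_\Q \rho^G$) to the identity $\Psi_{\Q[G]}(\Theta)=0$, noting that each $\Q[G/U_j]$ contributes $1$ by Frobenius reciprocity. For a clean write-up, just present that last paragraph and drop the earlier digressions.
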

\begin{proof}
  By definition, the virtual representation $\bigoplus_j
  \bC[G/U_j]^{\oplus n_j}$ is zero.
The result follows by taking the inner product with the trivial character of $G$.
\end{proof}

\begin{proposition}\label{prop:orthoreg}
  Let~$G$ be a finite group, and let~$\Theta = \sum_j n_j U_j$ be a~$G$-relation.
  Let~$A$ be a $\Q[G]$-module that has no simple summand in common with any~$\Q[G/U_j]$.
  Then~$\cC_{\Theta}(A)=1$.
\end{proposition}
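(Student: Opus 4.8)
The plan is to reduce to the case where $A$ is simple. By Corollary~\ref{cor:multiplicative}, $\cC_\Theta$ is multiplicative in direct sums, and since $\Q[G]$ is semisimple, $A$ decomposes as a direct sum of simple $\Q[G]$-modules; by hypothesis none of these simple summands occurs in any $\Q[G/U_j]$. So it suffices to prove $\cC_\Theta(A) = 1$ when $A$ is a simple $\Q[G]$-module not appearing in any $\Q[G/U_j]$. For such $A$, and for each subgroup $U_j$, the fixed space $A^{U_j}$ is isomorphic to $\Hom_{\Q[G]}(\Q[G/U_j], A)$ (by Frobenius reciprocity, $A^{U_j} = \Hom_{\Q[U_j]}(\triv, \res_{U_j} A) = \Hom_{\Q[G]}(\Q[G/U_j], A)$), and since $A$ is simple and does not occur in the permutation module $\Q[G/U_j]$, this Hom space is zero. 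Hence $A^{U_j} = 0$ for every $j$.

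Now I would unwind the definition of the regulator constant on this module. With $S = \Q$ here (we are computing a class in $\Q^\times/(\Q^\times)^2$), we have $\modStors{(A^{U_j})} = 0$ for all $j$. The determinant of a bilinear form on the zero module is, by the usual empty-product convention, equal to $1$ (the matrix is the empty $0\times 0$ matrix). Therefore each factor $\det\bigl(\tfrac{1}{\#U_j}\langle\cdot,\cdot\rangle \big|\, \modStors{(A^{U_j})}\bigr)^{n_j}$ equals $1^{n_j} = 1$, and so the product over $j$ is $1$. This gives $\cC_\Theta(A) = 1$ in $\Q^\times/(\Q^\times)^2$, completing the reduction and the proof.

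The only point requiring care — and I would state it explicitly — is the identification $A^{U_j} \cong \Hom_{\Q[G]}(\Q[G/U_j], A)$ together with the fact that "$A$ has no simple summand in common with $\Q[G/U_j]$" genuinely forces this Hom space to vanish; this uses semisimplicity of $\Q[G]$ so that $\Hom_{\Q[G]}$ between modules with disjoint sets of simple constituents is zero. There is no real analytic or computational obstacle: once the fixed spaces are seen to be zero, the statement is immediate from the convention on empty determinants. One might alternatively phrase the argument without Frobenius reciprocity by noting directly that $A^{U_j}$ is the image of the idempotent averaging over $U_j$, which realizes $A^{U_j}$ as a $\Q[G]$-module quotient of (a summand of) $\Q[G/U_j]$, and a simple module not occurring in $\Q[G/U_j]$ cannot be such a quotient. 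Either route is short; the substance of the proposition is really just the book-keeping that a relation, applied to a module orthogonal to all the pieces of the relation, sees nothing.
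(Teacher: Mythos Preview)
Your argument is correct. The paper does not actually prove this proposition but simply cites \cite[Lemma~2.26]{tamroot}; you have supplied a self-contained proof along the natural lines (reduce to simple~$A$ via Corollary~\ref{cor:multiplicative}, use Frobenius reciprocity $A^{U_j}\cong\Hom_{\Q[G]}(\Q[G/U_j],A)=0$, then take the empty determinant), which is almost certainly what the cited reference does as well. One small quibble: in your alternative phrasing at the end, $A^{U_j}$ is not a $\Q[G]$-module (the averaging idempotent $\tfrac{1}{\#U_j}\sum_{u\in U_j}u$ is not central in $\Q[G]$), so the clause ``realizes $A^{U_j}$ as a $\Q[G]$-module quotient of (a summand of) $\Q[G/U_j]$'' is not quite right as stated; the Frobenius reciprocity version you gave first is the clean one.
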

\begin{proof}
See \cite[Lemma 2.26]{tamroot}.
\end{proof}

Let $G$ be a finite group, and $D$ a subgroup. The operation on $G$-sets
of restricting the action of $G$ to $D$ extends linearly
to a restriction map on the Burnside groups, and induces a restriction map
on relations. With respect to the bases of the Burnside groups given
by transitive $G$-sets, respectively transitive $D$-sets,
the restriction map is given by Mackey's formula:
\begin{align}\label{eq:res}
\Res_{D}^G U = \sum_{x\in U\lquo G / D} D \cap x^{-1}Ux.
\end{align}
We have the following form of Frobenius reciprocity for regulator constants.
\begin{proposition}\label{prop:Frobenius}
Let $G$ be a finite group, let $D$ be a subgroup, let $A$ be an $S[D]$-module,
and let $\Theta$ be a $G$-relation. Then we have
$$
\cC_{\Theta}(\Ind^G_D A) = \cC_{\Res_D^G\Theta}(A).
$$
\end{proposition}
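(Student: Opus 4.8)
The plan is to use the freedom, granted by Theorem~\ref{thm:regconst}, to compute $\cC_{\Theta}(\Ind^G_D A)$ with any convenient pairing on $\Ind^G_D A$, and to choose one \emph{induced} from a fixed pairing $\langle\cdot,\cdot\rangle_A$ on $A$. Fixing coset representatives, write $\Ind^G_D A = \bigoplus_{g\in G/D} g\otimes A$ as an $S$-module, and define $\langle g\otimes a, g'\otimes b\rangle = \langle a,b\rangle_A$ if $g=g'$ and $0$ if $gD\neq g'D$, extended $S$-bilinearly. A short verification — using that $G$ permutes the summands up to a twist by an element of $D$, together with the $D$-invariance of $\langle\cdot,\cdot\rangle_A$ — shows this is $S$-bilinear, $G$-invariant, $\bR$-valued, and non-degenerate on $\modStors{(\Ind^G_D A)}$ (the Gram matrix is block diagonal with blocks equal to the Gram matrix of $\langle\cdot,\cdot\rangle_A$ on $\modStors{A}$). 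Hence it is a pairing on $\Ind^G_D A$ in the sense of the paper, and $\cC_{\Theta}(\Ind^G_D A)$ may be computed with it.

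Next I would decompose the $U$-fixed points for each subgroup $U\le G$ via the double-coset (Mackey) decomposition. For $x$ ranging over representatives of $U\lquo G/D$, the $U$-orbit of $xD$ in $G/D$ contributes a summand isomorphic to $\Ind^U_{U\cap xDx^{-1}}({}^{x}A)$, and the standard formula for the invariants of an induced module gives a canonical $S$-module isomorphism
$$
(\Ind^G_D A)^U \;\cong\; \bigoplus_{x\in U\lquo G/D} A^{D\cap x^{-1}Ux},
$$
under which $a\in A^{D\cap x^{-1}Ux}$ corresponds to $v_a=\sum_{u} ux\otimes a$, the sum being over representatives $u$ of $U/(U\cap xDx^{-1})$. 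The crucial point is that the induced pairing is transparent with respect to this decomposition: summands attached to distinct double cosets are orthogonal (their underlying cosets in $G/D$ are disjoint), while within a single summand $\langle v_a,v_b\rangle = [U:U\cap xDx^{-1}]\,\langle a,b\rangle_A = \tfrac{\#U}{\#(D\cap x^{-1}Ux)}\,\langle a,b\rangle_A$.

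Transporting an $S$-basis of each $\modStors{(A^{D\cap x^{-1}Ux})}$ via the maps $a\mapsto v_a$ gives an $S$-basis of $\modStors{(\Ind^G_D A)^U}$ in which the Gram matrix of $\tfrac1{\#U}\langle\cdot,\cdot\rangle$ is block diagonal, the $x$-block being the Gram matrix of $\tfrac{1}{\#(D\cap x^{-1}Ux)}\langle\cdot,\cdot\rangle_A$ on $\modStors{(A^{D\cap x^{-1}Ux})}$ — here the normalising factor $\tfrac1{\#U}$ exactly cancels the index $[U:U\cap xDx^{-1}]$. Taking determinants, then raising to the powers $n_j$ and multiplying over the terms $n_jU_j$ of $\Theta$, yields
$$
\cC_{\Theta}(\Ind^G_D A)=\prod_j\ \prod_{x\in U_j\lquo G/D}\det\!\left(\tfrac{1}{\#(D\cap x^{-1}U_jx)}\langle\cdot,\cdot\rangle_A\Big|\modStors{(A^{D\cap x^{-1}U_jx})}\right)^{n_j}.
$$
By Mackey's formula~\eqref{eq:res}, $\Res^G_D\Theta=\sum_j n_j\sum_{x\in U_j\lquo G/D}(D\cap x^{-1}U_jx)$, which is a $\Q[D]$-relation since restriction carries relations to relations; so the right-hand side above is precisely $\cC_{\Res^G_D\Theta}(A)$ by definition. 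The identity holds in $\bR^\times$, hence a fortiori in $\bR^\times/(S^\times)^2$, which is the claim.

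The only genuine difficulty is bookkeeping: tracking the conjugations and the index $[U:U\cap xDx^{-1}]$ so that the factor $\tfrac1{\#U}$ from the definition of $\cC_{\Theta}$ collapses to the factor $\tfrac{1}{\#(D\cap x^{-1}Ux)}$ demanded by $\cC_{\Res^G_D\Theta}$, and checking that the $S$-bases and the $S$-torsion quotients are carried compatibly across the isomorphism $(\Ind^G_D A)^U\cong\bigoplus_x A^{D\cap x^{-1}Ux}$. None of this is deep, but it must be set up with care; everything else is formal once the induced pairing has been introduced.
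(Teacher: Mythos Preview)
Your argument is correct and is essentially the standard proof of this Frobenius reciprocity for regulator constants: induce the pairing, apply Mackey decomposition to the fixed points, observe block-diagonality and that the index $[U:U\cap xDx^{-1}]$ converts $\tfrac{1}{\#U}$ into $\tfrac{1}{\#(D\cap x^{-1}Ux)}$. The paper itself does not prove this proposition at all---it simply cites \cite[Proposition~2.45]{tamroot}, where the proof given is exactly along the lines you sketch. So your proposal is not a different route; it is a fleshed-out version of the cited argument that the paper chose to omit.
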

\begin{proof}
See \cite[Proposition 2.45]{tamroot}.
\end{proof}

\section{Torsion homology and regulators in relations}\label{sec:manifolds}
In this section, we investigate the quotients of torsion homology
and of regulators of Riemannian manifolds arising from $G$-relations.
We begin by recalling some basic definitions and fixing the notation. The reader
is referred to \cite{Laplace} for the details.
\begin{definition}
If $V$, $W$ are two abelian groups equipped with $\bR$-valued bilinear forms
$\langle\cdot,\cdot\rangle_V$, respectively $\langle\cdot,\cdot\rangle_W$,
a \emph{similitude from $V$ to $W$ with factor\footnote{The similitude factor is
sometimes defined to be the square root of our convention.} $\lambda\in \bR$} is
a homomorphism $f\colon V\rar W$ such that $\langle f(v_1),f(v_2)\rangle_W =
\lambda\langle v_1,v_2\rangle_V$ for all $v_1$, $v_2\in V$. An \emph{isometry}
is a similitude with factor 1.
\end{definition}

\begin{notation}\label{not:pairings}
Let $X$ be a $d$-dimensional Riemannian manifold, and let $i\ge 0$ be an 
integer.
\begin{enumerate}[leftmargin=*, label={\roman*.}]
\item \emph{Harmonic forms.}
The Laplace--de Rham operator acts on the differential $i$-forms on $X$,
and we let $\cH^i(X)$ denote the space of differential $i$-forms that lie in the
kernel of that operator, the space of harmonic $i$-forms. The Riemannian metric
on $X$ induces a canonical inner product on $\cH^i(X)$, which induces a canonical
isomorphism between $\cH^i(X)$ and the $\bR$-linear dual $\cH^i(X)^{\lor}=\Hom(\cH^i(X),\bR)$.
Explicitly, the inner product on $\cH^i(X)$ is given by
$$
(\omega_1,\omega_2)_X^i = \int_X \omega_1\wedge *\omega_2,
$$
where $\omega_1$, $\omega_2\in \cH^i(X)$, and where $*$ denotes the Hodge
star operator $*\colon \cH^i(X)\rar \cH^{d-i}(X)$ (see \cite[\S 1.2.3]{Laplace}).
We equip~$\cH^i(X)^{\lor}$ with the inner product induced by the one
on~$\cH^i(X)$, i.e. the unique inner product that makes the map
$\omega \mapsto (\cdot, \omega)_X^i$ an isometry.

\item\label{not:reg} \emph{Regulators.}
We have a homomorphism
\[
  h_X^i\colon H_i(X,\Z)\rar \cH^i(X)^{\lor}
\]
given by $h_X^i(\gamma) = (\omega\mapsto \int_\gamma \omega)$.
It follows from the Hodge Theorem (\cite[Theorem 1.45]{Laplace}) and
de~Rham's Theorem that $\ker h_X^i=H_i(X,\Z)_{\tors}$ and that the image of
$h_X^i$ spans~$\cH^i(X)^{\lor}$ over $\bR$.
The \emph{$i$-th regulator} $\Reg_i(X)$ of~$X$ is defined as the covolume
of~$h_X^i(H_i(X,\Z))$ with respect to the inner product on~$\cH^i(X)^{\lor}$.

Let~$\langle\cdot,\cdot\rangle_X^i\colon H_i(X,\Z)\otimes_{\Z}H_i(X,\Z)\rar \bR$
denote the pullback via~$h_X^i$ of the pairing on~$\cH^i(X)^{\lor}$, i.e. the
unique pairing on $H_i(X,\Z)$ that makes~$h_X^i$ an isometry.

\item \emph{Poincar\'e duality.}
By Poincar\'e duality, the map
\[
  D_X\colon H^{d-i}(X,\Z)\rar H_i(X,\Z),
\]
given by cap product with the fundamental class of $X$ is an isomorphism.
It follows that~$\Reg_i(X) = \Reg_{d-i}(X)^{-1}$.

\item\label{rmrk:equivariant} \emph{Actions of automorphism groups.} Let $G$ be a group of automorphisms of $X$.
Then $G$ acts on the left by linear transformations on $H_i(X,\Z)$ via pushforward,
$\gamma\mapsto \sigma_*\gamma$ for $\sigma\in G$ and $\gamma\in H_i(X,\Z)$.
Also, $G$ acts linearly on the right on $\cH^i(X)$ via pullback,
$\omega\mapsto \sigma^*\omega$ for $\sigma\in G$ and $\omega\in \cH^i(X)$,
which induces the dual action on the left on $\cH^i(X)^\lor$. It follows
from the adjointness of pullbacks and pushforwards that the map $h_X^i$
is a homomorphism of left $G$-modules. Moreover, recall that our automorphisms
are assumed to be orientation preserving, so the Poincar\'e
duality map $D_X$ is an isomorphism of $G$-modules.
\end{enumerate}
\end{notation}

\begin{lemma}\label{lem:regdet}
Let~$X$ be a Riemannian manifold.
Then we have $\det\langle \cdot ,\cdot \rangle^i_X=\Reg_i(X)^2$, where the left hand
side is computed with respect to any $\Z$-basis of $\modtors{H_i(X,\Z)}$.
In particular, the pairing $\langle \cdot ,\cdot \rangle^i_X$ is non-degenerate
on~$\modtors{H_i(X,\Z)}$.
\end{lemma}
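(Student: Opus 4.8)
The plan is to unwind the definitions: by construction in Notation~\ref{not:pairings}, the pairing $\langle\cdot,\cdot\rangle^i_X$ on $H_i(X,\Z)$ is the pullback via $h_X^i$ of the inner product on $\cH^i(X)^\lor$, and $h_X^i$ is by construction an isometry from $H_i(X,\Z)$ (with this pairing) to its image $h_X^i(H_i(X,\Z))\subseteq \cH^i(X)^\lor$. Since $\ker h_X^i = H_i(X,\Z)_{\tors}$, the map $h_X^i$ descends to an isomorphism of $\modtors{H_i(X,\Z)}$ onto the lattice $h_X^i(H_i(X,\Z))$, and this isomorphism is still an isometry for the induced pairing on $\modtors{H_i(X,\Z)}$.

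First I would fix a $\Z$-basis $\gamma_1,\dots,\gamma_n$ of $\modtors{H_i(X,\Z)}$, so that $h_X^i(\gamma_1),\dots,h_X^i(\gamma_n)$ is a $\Z$-basis of the lattice $L:=h_X^i(H_i(X,\Z))$ in the inner product space $\cH^i(X)^\lor$. Then the Gram matrix $\bigl(\langle\gamma_k,\gamma_\ell\rangle^i_X\bigr)_{k,\ell}$ equals, by the isometry property, the Gram matrix $\bigl((h_X^i(\gamma_k),h_X^i(\gamma_\ell))\bigr)_{k,\ell}$ of the chosen basis of $L$ with respect to the inner product on $\cH^i(X)^\lor$. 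The covolume of a lattice with respect to a positive-definite inner product is by definition the square root of the determinant of such a Gram matrix; hence $\det\langle\cdot,\cdot\rangle^i_X = \operatorname{covol}(L)^2 = \Reg_i(X)^2$, which is the claimed identity. Non-degeneracy of $\langle\cdot,\cdot\rangle^i_X$ on $\modtors{H_i(X,\Z)}$ then follows immediately: the inner product on $\cH^i(X)^\lor$ is positive-definite, so its Gram matrix on any basis has nonzero (indeed positive) determinant, whence $\Reg_i(X)^2\neq 0$ and the pairing is non-degenerate.

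There is essentially no obstacle here; the only points requiring a word of care are (i) that $h_X^i$ genuinely induces an isometry on the torsion-free quotient, which is immediate since the pairing on $H_i(X,\Z)$ was \emph{defined} as the pullback and $\ker h_X^i=H_i(X,\Z)_{\tors}$, so the pairing descends and the descended map is an isometry onto $L$; and (ii) that ``covolume'' is understood with respect to the given inner product, so that the Gram determinant computes its square. Both are already built into the setup of Notation~\ref{not:pairings}, so the proof is a short formal verification rather than anything substantive.
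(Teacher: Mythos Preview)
Your proof is correct and follows exactly the same approach as the paper: the identity is simply the standard fact that the covolume of a lattice in an inner product space equals the square root of the determinant of the Gram matrix of any $\Z$-basis. The paper's own proof is a one-line appeal to this fact, and your argument is a careful unwinding of the same observation.
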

\begin{proof}
  The claimed equality is the expression of the covolume of a lattice in terms
of the determinant of its Gram matrix.
\end{proof}

\begin{remark}\label{rmrk:PoincareReg}
  Explicitly, let~$\omega_1,\ldots,\omega_r$ be an orthonormal basis of
$\cH^i(X)$, and
  let~$(\gamma_j)$ be a~$\Z$-basis of~$\modtors{H_i(X,\Z)}$. Then we have
  \[
    \Reg_i(X) = \left|\det\int_{\gamma_j}\omega_k\right|\text{,}
  \]
  and for all $\gamma$, $\gamma'\in H_i(X,\Z)$, we have
$\langle \gamma,\gamma'\rangle^i_X =
\sum_{k=1}^r(\int_{\gamma}\omega_k\cdot \int_{\gamma'}\omega_k)$.
\end{remark}



We now prove Proposition \ref{prop:intro1} as a consequence of the following result.

\begin{theorem}\label{thm:noMackeyFunctor}
Let $X\rar Y$ be a finite $G$-covering of Riemannian manifolds,
and let $\Theta=\sum_j n_jU_j - \sum_k n_k'U_k'$ be a $\Z_{(p)}[G]$-relation,
where $n_j$ and $n_k'$ are positive integers. Then for every 
integer $i\ge 0$, we have
$$
\bigoplus_j H_i(X/U_j,\Z)[p^\infty]^{n_j}\cong \bigoplus_k H_i(X/U_k',\Z)[p^\infty]^{n_k'}.
$$
In particular we have~$\ord_p(\#H_i(X/\Theta,\Z)_{\tors})=0$
(see Notation \ref{not:BurnsideFunctions}).
\end{theorem}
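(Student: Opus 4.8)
The plan is to deduce the isomorphism of $p$-primary torsion from a single algebraic principle: for a prime $p \nmid \#G$, the functor $M \mapsto M^U$ (taking $U$-invariants, equivalently $H_0(U, M)$) on $\Z_{(p)}[G]$-modules, applied to the chain complex computing homology, commutes with a $\Z_{(p)}[G]$-relation. Concretely, the singular chain complex $C_\bullet(X, \Z_{(p)})$ is a complex of $\Z_{(p)}[G]$-modules, and by the theory of covering spaces $C_\bullet(X/U, \Z_{(p)}) \cong C_\bullet(X, \Z_{(p)})_U = (C_\bullet(X, \Z_{(p)}))^U$, where the last identification uses that $p \nmid \#U$ so the norm element splits coinvariants and invariants. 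Since $\Theta = \sum_j n_j U_j - \sum_k n_k' U_k'$ is a $\Z_{(p)}[G]$-relation, there is an isomorphism of permutation modules $\bigoplus_j \Z_{(p)}[G/U_j]^{n_j} \cong \bigoplus_k \Z_{(p)}[G/U_k']^{n_k'}$, and hence, for every $\Z_{(p)}[G]$-module $M$, a natural isomorphism
$$
\bigoplus_j \Hom_{\Z_{(p)}[G]}(\Z_{(p)}[G/U_j], M)^{n_j} \;\cong\; \bigoplus_k \Hom_{\Z_{(p)}[G]}(\Z_{(p)}[G/U_k'], M)^{n_k'},
$$
i.e. $\bigoplus_j (M^{U_j})^{n_j} \cong \bigoplus_k (M^{U_k'})^{n_k'}$, functorially in $M$. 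Applying this term by term to the chain complex $C_\bullet(X, \Z_{(p)})$, one gets an isomorphism of chain complexes $\bigoplus_j C_\bullet(X/U_j, \Z_{(p)})^{n_j} \cong \bigoplus_k C_\bullet(X/U_k', \Z_{(p)})^{n_k'}$, and passing to homology gives
$$
\bigoplus_j H_i(X/U_j, \Z_{(p)})^{n_j} \;\cong\; \bigoplus_k H_i(X/U_k', \Z_{(p)})^{n_k'}
$$
for every $i$. Finally, $H_i(X/U, \Z)[p^\infty]$ is the torsion submodule of $H_i(X/U, \Z_{(p)})$ (the manifolds are compact, so homology is finitely generated, and localisation at $p$ kills prime-to-$p$ torsion while leaving $p$-torsion untouched), so taking torsion submodules of both sides — which is a functor commuting with finite direct sums — yields the claimed isomorphism $\bigoplus_j H_i(X/U_j,\Z)[p^\infty]^{n_j}\cong \bigoplus_k H_i(X/U_k',\Z)[p^\infty]^{n_k'}$.

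For the "in particular" clause: with $\Theta$ viewed as an element of the Burnside group and the function $U \mapsto \#H_i(X/U, \Z)_{\tors}$ extended multiplicatively as in Notation~\ref{not:BurnsideFunctions}, the quantity $\#H_i(X/\Theta, \Z)_{\tors}$ is the ratio $\bigl(\prod_j \#H_i(X/U_j,\Z)_{\tors}^{n_j}\bigr) \big/ \bigl(\prod_k \#H_i(X/U_k',\Z)_{\tors}^{n_k'}\bigr)$. The isomorphism just proved shows the $p$-parts of numerator and denominator agree, so $\ord_p$ of this ratio is $0$.

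The main technical point to get right is the identification $C_\bullet(X/U, \Z_{(p)}) \cong C_\bullet(X, \Z_{(p)})^U$ as $\Z_{(p)}[G]$-modules (more precisely, the statement that taking $U$-coinvariants of the chain complex of $X$ computes the homology of $X/U$, together with the splitting of invariants and coinvariants when $p \nmid \#U$). One way to make this clean is to work with the complex of singular chains and the transfer/averaging argument: the composite $M_U \to M^U$ given by $m \mapsto \sum_{g \in U} gm$ followed by the natural projections is multiplication by $\#U$, which is invertible in $\Z_{(p)}$, so invariants and coinvariants are canonically isomorphic, and both agree with $M \otimes_{\Z_{(p)}[G]} \Z_{(p)}[G/U]$. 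Care is needed to ensure all identifications are simultaneously $G$-equivariant (for the residual $N_G(U)/U$-action, which is what the whole point relies on) and natural in the chain complex, so that the Burnside-group relation can be applied; but this is exactly the setup under which regulator constants and similar invariants are routinely manipulated, and no geometric input beyond the covering-space description of $X/U$ is required.
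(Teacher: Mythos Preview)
Your approach is the paper's, just unpacked: the paper writes $H_i(X/U,\Z_{(p)})\cong H_i(X/G,\Z_{(p)}[G/U])$ (homology with local coefficients) and invokes additivity in the coefficient module, which at the chain level is exactly your observation that $C_\bullet(X/U,\Z_{(p)})$ is the $U$-coinvariants of $C_\bullet(X,\Z_{(p)})$, i.e.\ $C_\bullet(X,\Z_{(p)})\otimes_{\Z_{(p)}[G]}\Z_{(p)}[G/U]$, together with the isomorphism of permutation modules coming from the relation.

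There is, however, a real slip. The theorem does \emph{not} assume $p\nmid\#G$: for instance the dihedral relation of Example~\ref{ex:dihedral} is a $\Z_{(2)}[G]$-relation with $2\mid\#G$, and the relation of Example~\ref{ex:p-Gassman} is a $\Z_{(q)}[G]$-relation for many primes $q$ dividing $\#\GL_2(\F_p)$. You invoke $p\nmid\#U$ only to pass from $C_\bullet(X,\Z_{(p)})_U$ to $C_\bullet(X,\Z_{(p)})^U$ so that you can use the $\Hom$ description of fixed points. This detour is unnecessary: stay with coinvariants and use $(-)\otimes_{\Z_{(p)}[G]}\Z_{(p)}[G/U]$ in place of $\Hom_{\Z_{(p)}[G]}(\Z_{(p)}[G/U],-)$. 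The isomorphism of permutation modules then yields a natural isomorphism of the coinvariant functors, and the rest of your argument goes through verbatim with no hypothesis relating $p$ and $\#G$. (Alternatively, since $G$ acts freely on $X$ the singular chain groups are free $\Z_{(p)}[G]$-modules, and for a free module the norm map $M_U\to M^U$ is an isomorphism regardless of whether $p\mid\#U$; but the tensor formulation is cleaner and is exactly what ``homology with local coefficients'' means.) Minor aside: $H_0(U,M)$ is $M_U$, not $M^U$.
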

\begin{proof}
Let $U$ be any subgroup of $G$. Then the $p^{\infty}$-torsion of $H_i(X/U,\Z)$
is isomorphic to
the torsion subgroup of $H_i(X/U,\Z)\otimes\Z_{(p)}\cong H_i(X/U,\Z_{(p)})$.
By definition of homology with local coefficients (see \cite[\S 3.H]{Hatcher}),
we have $H_i(X/U,\Z_{(p)})\cong H_i(X/G,\Z_{(p)}[G/U])$. Since
homology with local coefficients is additive in direct sums of modules,
the result follows.
\end{proof}

\begin{corollary}
Let $X\rar Y$ and $G$ be as in Theorem \ref{thm:noMackeyFunctor}, let $p$ be a
prime number that does not divide $\#G$, and let $\Theta$ be a $G$-relation. Then
$$
\bigoplus_j H_i(X/U_j,\Z)[p^\infty]^{n_j}\cong \bigoplus_k H_i(X/U_k',\Z)[p^\infty]^{n_k'}.
$$
\end{corollary}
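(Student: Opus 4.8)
The plan is to reduce this immediately to Theorem~\ref{thm:noMackeyFunctor}, the only real content being a change of coefficient ring. Recall that by Definition~\ref{defn:relation} a $G$-relation means a $\Q[G]$-relation, whereas Theorem~\ref{thm:noMackeyFunctor} is stated for $\Z_{(p)}[G]$-relations; so the first step is to upgrade $\Theta$ from a $\Q[G]$-relation to a $\Z_{(p)}[G]$-relation. This is precisely where the hypothesis $p\nmid\#G$ enters: by Lemma~\ref{lem:Grelations}, for such $p$ the notions of $\Q[G]$-relation and $\Z_{(p)}[G]$-relation coincide, so $\Theta$ is automatically a $\Z_{(p)}[G]$-relation.

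Next I would put $\Theta$ in the shape required by Theorem~\ref{thm:noMackeyFunctor}. Writing the element of the Burnside group as $\Theta=\sum_j n_jU_j-\sum_k n_k'U_k'$ with all $n_j,n_k'$ positive integers (collecting the terms with positive and negative coefficients respectively), we are exactly in the situation of that theorem. Applying it for the given $i\ge 0$ yields
$$
\bigoplus_j H_i(X/U_j,\Z)[p^\infty]^{n_j}\cong \bigoplus_k H_i(X/U_k',\Z)[p^\infty]^{n_k'},
$$
which is the assertion.

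There is essentially no obstacle here: the statement is a genuine corollary, and the only point that needs care is not to conflate ``$G$-relation'' with ``$\Z_{(p)}[G]$-relation'' in general—the two agree only because $p\nmid\#G$, via Lemma~\ref{lem:Grelations}. I would simply make that invocation explicit and then quote Theorem~\ref{thm:noMackeyFunctor}.
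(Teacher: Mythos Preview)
Your argument is correct and matches the paper's proof exactly: the paper also deduces the corollary directly from Theorem~\ref{thm:noMackeyFunctor} together with Lemma~\ref{lem:Grelations}, the latter supplying the passage from a $\Q[G]$-relation to a $\Z_{(p)}[G]$-relation under the hypothesis $p\nmid\#G$.
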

\begin{proof}
This is a direct consequence of Theorem \ref{thm:noMackeyFunctor} and Lemma
\ref{lem:Grelations}.
\end{proof}

The rest of the section is devoted to the behaviour of regulators in $G$-relations.
In particular, we will prove Theorem \ref{thm:intro3}.
\begin{lemma}\label{lem:H0}
Let $X\to Y$ be a finite $G$-covering of Riemannian manifolds of
dimension $d$, and let $\Theta=\sum_j n_jU_j$ be a $G$-relation. Then
$$
\Reg_d(X/\Theta)^2 = \Reg_0(X/\Theta)^{-2} = \cC_{\Theta}(\triv).
$$
\end{lemma}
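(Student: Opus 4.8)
The plan is to compute $\Reg_0$ and $\Reg_d$ of a closed connected oriented Riemannian manifold $M$ explicitly, and then assemble the $G$-relation using the multiplicativity of Notation~\ref{not:BurnsideFunctions}. First I would recall that for a closed connected oriented $M$ of dimension $d$, we have $H_0(M,\Z)\cong \Z$ and $H_d(M,\Z)\cong \Z$, both torsion-free, while $\cH^0(M)$ is the space of constant functions and $\cH^d(M)=*\,\cH^0(M)$ is spanned by the Riemannian volume form. So $\modtors{H_0(M,\Z)}$ and $\modtors{H_d(M,\Z)}$ are both free of rank~$1$, and $\Reg_0(M)$, $\Reg_d(M)$ are honest positive real numbers (not just classes). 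By Remark~\ref{rmrk:PoincareReg}, picking the orthonormal basis of $\cH^0(M)$ given by the constant function $\mathrm{vol}(M)^{-1/2}$ and the generator $\gamma_0$ of $H_0(M,\Z)$ represented by a point, we get $\Reg_0(M) = |\int_{\gamma_0}\mathrm{vol}(M)^{-1/2}| = \mathrm{vol}(M)^{-1/2}$. Dually, the orthonormal basis of $\cH^d(M)$ is $\mathrm{vol}(M)^{-1/2}$ times the volume form, and the generator $[M]$ of $H_d(M,\Z)$ integrates the volume form to $\mathrm{vol}(M)$, giving $\Reg_d(M) = \mathrm{vol}(M)^{1/2}$; alternatively this is immediate from the Poincar\'e duality relation $\Reg_d(M) = \Reg_0(M)^{-1}$ already recorded in Notation~\ref{not:pairings}. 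In particular $\Reg_d(M)^2 = \Reg_0(M)^{-2} = \mathrm{vol}(M)$.

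Next I would apply this to each $M = X/U_j$. Since $X\to Y$ is a finite $G$-covering, the covering map $X/U_j \to Y$ has degree $[G:U_j] = \#G/\#U_j$, hence $\mathrm{vol}(X/U_j) = (\#G/\#U_j)\cdot\mathrm{vol}(Y)$. Therefore
\[
\Reg_d(X/U_j)^2 = \Reg_0(X/U_j)^{-2} = \frac{\#G}{\#U_j}\,\mathrm{vol}(Y).
\]
Now raise to the power $n_j$ and multiply over $j$, using the extension-by-multiplicativity convention of Notation~\ref{not:BurnsideFunctions} to interpret $\Reg_d(X/\Theta)^2 := \prod_j \Reg_d(X/U_j)^{2n_j}$ and likewise for $\Reg_0$. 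The factors $\#G\cdot\mathrm{vol}(Y)$ contribute $(\#G\cdot\mathrm{vol}(Y))^{\sum_j n_j}$, which is $1$ by Lemma~\ref{lem:degree0} since $\sum_j n_j = 0$ for a $G$-relation. What survives is $\prod_j (\#U_j)^{-n_j}$, which by Example~\ref{ex:regconsttriv} is exactly $\cC_{\Theta}(\triv)$. This yields $\Reg_d(X/\Theta)^2 = \Reg_0(X/\Theta)^{-2} = \cC_{\Theta}(\triv)$, as claimed.

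I do not expect any serious obstacle here; the argument is essentially bookkeeping once the single-manifold computation $\Reg_0(M) = \mathrm{vol}(M)^{-1/2}$ is in place. The one point that deserves a little care is the identification of $\cH^0$ and $\cH^d$ and their inner products: one must confirm via the Hodge star that $*$ sends the normalized constant function to the normalized volume form, so that the regulator computation for degree $d$ is consistent with the Poincar\'e duality relation already stated. After that, the only thing to invoke is that $\sum_j n_j = 0$ for a $G$-relation (Lemma~\ref{lem:degree0}) to kill the volume-of-$Y$ and $\#G$ contributions, and Example~\ref{ex:regconsttriv} to recognize the leftover as $\cC_{\Theta}(\triv)$.
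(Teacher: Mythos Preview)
Your proposal is correct and follows essentially the same approach as the paper's proof: compute $\Reg_0(X/U)^{-2}=\Reg_d(X/U)^2=\vol(X/U)$ for each subgroup, take the product over the relation, kill the volume factor using $\sum_j n_j=0$ (Lemma~\ref{lem:degree0}), and identify the remainder via Example~\ref{ex:regconsttriv}. The only cosmetic difference is that the paper writes $\vol(X/U)=\vol(X)/\#U$ while you write it as $(\#G/\#U)\vol(Y)$, which is of course the same thing.
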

\begin{proof}
  For all $U\leq G$, we have $\Reg_d(X/U)^2=\Reg_0(X/U)^{-2} = \frac{1}{\#U}\vol(X)$.
It follows that
$$
\Reg_d(X/\Theta)^2 = \Reg_0(X/\Theta)^{-2} = \left(\prod_j
\frac{1}{\#U_j^{n_j}}\right)\cdot \vol(X)^{\left(\sum_j
n_j\right)}=\cC_{\Theta}(\triv),
$$
where the last equality follows from Example \ref{ex:regconsttriv} and Lemma
\ref{lem:degree0}.
\end{proof}


\begin{lemma}\label{lem:simil}
Let $f\colon X\rar Y$ be a finite $G$-covering of
Riemannian manifolds, and let $i\ge 0$ be an integer.
Then the following maps are similitudes with factor $\#G$:

\begin{enumerate}
\item\label{item:prepnotisom} the pullback map $f^*\colon \cH^i(Y)\rar \cH^i(X)^{G}$;
\item\label{item:notisom}
its dual $(f^*)^{\lor} \colon (\cH^i(X)^{G})^{\lor} \to \cH^i(Y)^{\lor}$;
\item\label{item:descentpairing}
the pushforward map
$
  f_*\colon H_i(X,\Z)^G \to H_i(Y,\Z);
$
\item\label{item:ascentpairing} any map $g\colon H_i(Y,\Z)\rar H_i(X,\Z)^G$ satisfying
$f_*g = \# G\cdot \mathrm{id}_{H_i(Y,\Z)}$.
\end{enumerate}
\end{lemma}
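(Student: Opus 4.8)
\textbf{Proof proposal for Lemma \ref{lem:simil}.}

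The plan is to treat the four statements by first establishing item \ref{item:prepnotisom} directly from the change of variables formula, and then deriving the remaining three by formal duality and adjointness arguments. For item \ref{item:prepnotisom}: since $f\colon X\rar Y$ is a $G$-covering, it is a local isometry of degree $\#G$, and pullback commutes with the Laplace--de Rham operator and with the Hodge star, so $f^*$ does land in $\cH^i(X)$, and its image lies in the $G$-invariants because the deck transformations act on $X$ over $Y$. To compute the similitude factor, take $\omega_1,\omega_2\in\cH^i(Y)$ and compute
\[
(f^*\omega_1,f^*\omega_2)^i_X=\int_X f^*\omega_1\wedge *f^*\omega_2 = \int_X f^*(\omega_1\wedge *\omega_2) = \#G\int_Y \omega_1\wedge*\omega_2 = \#G\,(\omega_1,\omega_2)^i_Y,
\]
using $*f^*=f^**$ and the fact that a degree-$\#G$ covering multiplies integrals of top forms by $\#G$. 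One should also note that $f^*$ is injective (it has a left inverse up to scalar, namely $\frac{1}{\#G}f_*$ via the transfer / integration over fibres) and that its image is all of $\cH^i(X)^G$ — this last surjectivity follows because $\frac{1}{\#G}f^*f_*$ is the projector onto the $G$-invariants, or alternatively from the isomorphism $\cH^i(Y)\cong\cH^i(X)^G$ induced by $H^i(Y,\bR)\cong H^i(X,\bR)^G$.

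For item \ref{item:notisom}: the dual of a similitude with factor $\lambda$ between inner product spaces, once both duals are equipped with the induced (dual) inner products, is again a similitude, and the factor behaves as follows. If $f^*$ is a similitude $V\to W$ with factor $\lambda$ that is an isomorphism, then identifying $V\cong V^\lor$ and $W\cong W^\lor$ via the inner products, the adjoint $(f^*)^\lor\colon W^\lor\to V^\lor$ corresponds to $\lambda$ times the inverse similitude, hence is a similitude with factor $\lambda^2\cdot\lambda^{-1}=\lambda$. Since here $\lambda=\#G$ and $f^*$ is an isomorphism onto $\cH^i(X)^G$ by item \ref{item:prepnotisom}, $(f^*)^\lor\colon(\cH^i(X)^G)^\lor\to\cH^i(Y)^\lor$ is a similitude with factor $\#G$. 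A cleaner way to package this, which I would use, is to observe that the pairing on $\cH^i(X)^\lor$ is defined so that $\omega\mapsto(\cdot,\omega)^i_X$ is an isometry, and then chase the diagram relating $h_Y^i$, $h_X^i$, $f_*$ and $f^*$.

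For items \ref{item:descentpairing} and \ref{item:ascentpairing}: here I would invoke the compatibility of the maps $h^i$ with the $G$-action recorded in Notation \ref{not:pairings}\ref{rmrk:equivariant}, namely that $h_X^i$ is a map of left $G$-modules and that $h_Y^i\circ f_* = (f^*)^\lor\circ h_X^i$ on $H_i(X,\Z)^G$ (adjointness of pushforward and pullback). Since $h_Y^i$ and $h_X^i$ are isometries onto their images (the pairings $\langle\cdot,\cdot\rangle^i$ being pulled back along them) and since $(f^*)^\lor$ restricted to $(\cH^i(X)^G)^\lor$ is a similitude with factor $\#G$ by item \ref{item:notisom}, the map $f_*$ on $H_i$ is a similitude with factor $\#G$ for the pairings $\langle\cdot,\cdot\rangle^i_X$ and $\langle\cdot,\cdot\rangle^i_Y$; this is item \ref{item:descentpairing}. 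For item \ref{item:ascentpairing}, given $g$ with $f_*g=\#G\cdot\mathrm{id}$, write $g$ as a section up to $\#G$ of the similitude $f_*$: for $\gamma,\gamma'\in H_i(Y,\Z)$ we get $\langle g\gamma,g\gamma'\rangle^i_X = \frac{1}{\#G}\langle f_*g\gamma,g\gamma'\rangle^i_Y$ — wait, more carefully, using that $f_*$ is a similitude with factor $\#G$ one computes $\#G\langle g\gamma,g\gamma'\rangle^i_X = \langle f_* g\gamma, f_* g\gamma'\rangle^i_Y = (\#G)^2\langle\gamma,\gamma'\rangle^i_Y$, hence $\langle g\gamma,g\gamma'\rangle^i_X = \#G\langle\gamma,\gamma'\rangle^i_Y$, so $g$ is a similitude with factor $\#G$ as claimed.

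The main obstacle I anticipate is not any single computation but rather keeping the bookkeeping of similitude factors consistent across the chain of dualizations: each of the three passages (taking linear duals, pulling back pairings along $h^i$, and inverting a similitude) transforms the factor, and it is easy to end up with $\#G$, $(\#G)^{\pm1}$, or $(\#G)^{\pm2}$ if one is not careful about which space carries the induced inner product and in which direction each map goes. I would therefore set up once and for all the commutative square with $h^i_X$, $h^i_Y$, $f_*$, $(f^*)^\lor$ and verify the factor on the analytic side (item \ref{item:prepnotisom}), then let everything else be forced by that square together with the defining property that the $h^i$ are isometries.
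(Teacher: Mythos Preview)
Your proposal is correct and follows essentially the same route as the paper: compute item~\ref{item:prepnotisom} directly from the degree-$\#G$ change-of-variables formula, deduce item~\ref{item:notisom} by duality using that $f^*$ is an isomorphism, obtain item~\ref{item:descentpairing} from the commutative square $h_Y^i\circ f_* = (f^*)^\lor\circ h_X^i$ together with the fact that the $h^i$ are isometries, and derive item~\ref{item:ascentpairing} from item~\ref{item:descentpairing}. The only point the paper makes slightly more explicit than you do is the identification of $(\cH^i(X)^G)^\lor$ with $(\cH^i(X)^\lor)^G$ as inner product spaces, which is what guarantees that $h_X^i$ restricted to $G$-invariants remains an isometry for the pairing appearing in item~\ref{item:notisom}; conversely, your argument for item~\ref{item:ascentpairing} spells out the factor computation that the paper leaves to the reader.
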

\begin{proof}
\begin{enumerate}[leftmargin=*]
\item Let $[X]$ and $[Y]$ denote the fundamental class on $X$, respectively on $Y$.
Then $f_*[X] = \#G\cdot [Y]$, so for all~$\omega_1,\omega_2\in\cH^i(Y)$, we have
\begin{align*}
(f^*\omega_1,f^*\omega_2)_X^i & = \int_{X} f^*\omega_1\wedge f^{*} {*\omega_2}
 = \int_{X} f^*(\omega_1\wedge *\omega_2)\\
& = \#G\cdot \int_{Y}\omega_1\wedge *\omega_2
= \#G\cdot (\omega_1,\omega_2)_Y^i.
\end{align*}
\item The assertion immediately follows from part (\ref{item:prepnotisom}),
from the fact that $f^*$ is an isomorphism, and from the definition of the
inner product on the dual space, see Notation \ref{not:pairings}.
\item
By Notation \ref{not:pairings} \ref{rmrk:equivariant}, the map $h_X^i$ sends
$H_i(X,\Z)^G$ to $(\cH^i(X)^G)^{\lor}$,
and it follows from the adjointness of pullbacks and pushforwards that the diagram
  \[
  \xymatrix{
    H_i(X,\Z)^G \ar[d]^{h_X^i} \ar[r]^(.56){f_*} & H_i(Y,\Z) \ar[d]^{h_Y^i} \\
    (\cH^i(X)^G)^{\lor} \ar[r]^(.6){(f^*)^{\lor}} & \cH^i(Y)^{\lor}
  }
  \]
  is commutative.
  Note that $(\cH^i(X)^G)^{\lor}$ is canonically
isomorphic, as an inner product space, to the $G$-coinvariants
$(\cH^i(X)^{\lor})_G$, which in turn is canonically isomorphic to
$(\cH^i(X)^{\lor})^G$. This shows that
$h_X^i\colon H_i(X,\Z)^G \to (\cH^i(X)^G)^{\lor}$ is an isometry.
  By definition, $h_Y^i$ is also an isometry,
  and by part (\ref{item:notisom}), $(f^*)^{\lor}$ is a similitude
  with factor~$\# G$. It follows that $f_*$ is also a similitude with factor~$\# G$.
\item By part~(\ref{item:descentpairing}), the map $f_*$ is a similitude with
factor~$\# G$, and the claim immediately follows.
\end{enumerate}\vspace{-1em}
\end{proof}

\begin{lemma}\label{lem:exconvenient}
  Let~$X\to Y$ be a finite~$G$-covering of Riemannian manifolds, and let~$i\ge 0$ be
  an integer. Then the composition
$$
g_i\colon
H_i(Y,\Z)\stackrel{D_Y^{-1}}{\longrightarrow}H^{d-i}(Y,\Z)\stackrel{f^*}{\longrightarrow}H^{d-i}(X,\Z)^G
\stackrel{D_X}{\longrightarrow} H_i(X,\Z)^G
$$
satisfies $f_*g_i = \# G\cdot \mathrm{id}_{H_i(Y,\Z)}$.
\end{lemma}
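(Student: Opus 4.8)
The plan is to exploit the standard fact that for a finite $G$-covering $f\colon X\to Y$, the composition of pullback and pushforward on cohomology is multiplication by the degree. Concretely, the transfer (umkehr) map $f^*\colon H^{d-i}(Y,\Z)\to H^{d-i}(X,\Z)$ followed by the pushforward $f_*\colon H^{d-i}(X,\Z)\to H^{d-i}(Y,\Z)$ satisfies $f_*\circ f^* = \#G\cdot\mathrm{id}$; this is the projection formula for covering spaces, since summing a pulled-back class over the fibres recovers $\#G$ times the original. The landing in the $G$-invariants $H^{d-i}(X,\Z)^G$ is clear because $f^*$ factors through the invariants of the deck transformation action. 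So on the level of cohomology the statement is immediate.

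Next I would transport this identity to homology via the two Poincar\'e duality isomorphisms $D_X$ and $D_Y$. The only thing to check is that these duality maps are compatible with the covering maps in the sense that the square
\[
\xymatrix{
H^{d-i}(Y,\Z) \ar[r]^{f^*} \ar[d]_{D_Y} & H^{d-i}(X,\Z) \ar[d]^{D_X}\\
H_i(Y,\Z) & \ar[l]^{f_*} H_i(X,\Z)
}
\]
commutes, i.e. that $f_*\circ D_X\circ f^* = D_Y$ up to the factor $\#G$. This follows from the compatibility of cap products with the pushforward: $f_*(f^*\alpha \frown \beta) = \alpha\frown f_*\beta$, applied with $\beta=[X]$ and using $f_*[X]=\#G\cdot[Y]$. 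Thus $f_*(g_i(\gamma)) = f_*(D_X f^* D_Y^{-1}\gamma) = D_Y(f_* f^* D_Y^{-1}\gamma)\cdot$, but it is cleaner to write $f_*g_i(\gamma) = f_*\bigl(f^*(D_Y^{-1}\gamma)\frown[X]\bigr) = (D_Y^{-1}\gamma)\frown f_*[X] = (D_Y^{-1}\gamma)\frown(\#G\cdot[Y]) = \#G\cdot D_Y(D_Y^{-1}\gamma) = \#G\cdot\gamma$, which is exactly the desired identity.

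The main (and really only) obstacle is bookkeeping: making sure the projection formula for cap products is being used with the correct variance, and that the image of $D_X\circ f^*$ genuinely lands in the $G$-invariant subgroup $H_i(X,\Z)^G$ rather than merely in $H_i(X,\Z)$. The latter holds because $f^*$ lands in $H^{d-i}(X,\Z)^G$ (each deck transformation $\sigma$ satisfies $\sigma^*\circ f^* = (f\circ\sigma)^* = f^*$), and $D_X$ is a map of $G$-modules by Notation \ref{not:pairings}\ref{rmrk:equivariant} (recall the automorphisms are orientation preserving, so they fix the fundamental class), hence it carries $H^{d-i}(X,\Z)^G$ into $H_i(X,\Z)^G$. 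Putting these observations together gives the statement with no further computation.
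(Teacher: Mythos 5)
Your proof is correct and follows essentially the same route as the paper: the key identity is the projection formula $f_*(f^*\alpha\frown[X])=\alpha\frown f_*[X]$ combined with $f_*[X]=\#G\cdot[Y]$, which gives $f_*D_Xf^*=\#G\cdot D_Y$ and hence the claim. (The opening appeal to the cohomology transfer is a detour you rightly abandon; the clean cap-product computation in your second paragraph is exactly the paper's argument.)
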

\begin{proof}
  The pushforward of the fundamental class of $X$ is equal to $\#G$ times
  the fundamental class of $Y$. It therefore follows from the naturality of the cap
  product that $\# G\cdot D_Y = f_* D_X f^*$, which proves the result.
\end{proof}

\begin{lemma}\label{lem:index}
  Let~$X\to Y$ be a finite $G$-covering of Riemannian manifolds, and let~$i\ge
  0$ be an
  integer. Let~$g\colon H_i(Y,\Z)\rar H_i(X,\Z)^G$ be any map
  satisfying $f_*g = \# G\cdot \mathrm{id}_{H_i(Y,\Z)}$, and let
$$
\lambda_i(X,G,g)=\left(H_i(X,\Z)^{G}\colon g(H_i(Y,\Z)) +
H_i(X,\Z)^{G}_{\tors}\right).
$$
Then~$\lambda_i(X,G,g)$ is finite and is independent of $g$.
\end{lemma}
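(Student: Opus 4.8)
The plan is to show both finiteness and independence of $g$ in one stroke by identifying $\lambda_i(X,G,g)$ with a quantity that manifestly does not involve $g$. First I would observe that $\modtors{H_i(X,\Z)^G}$ is a finitely generated free $\Z$-module, and that $g$ induces a map $\bar g\colon \modtors{H_i(Y,\Z)}\to \modtors{H_i(X,\Z)^G}$ after quotienting out torsion (note $g$ is injective modulo torsion because $f_* g = \#G\cdot\mathrm{id}$ is, so $\ker g \subseteq H_i(Y,\Z)_{\tors}$, and in fact $\ker g = H_i(Y,\Z)_{\tors}$ since $H_i(Y,\Z)_{\tors}$ maps into $H_i(X,\Z)^G_{\tors}$). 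The index $\lambda_i(X,G,g)$ is then precisely the index of $\bar g(\modtors{H_i(Y,\Z)})$ in $\modtors{H_i(X,\Z)^G}$, which is finite since both are free of the same rank: indeed, tensoring with $\bR$, the map $f_*$ restricted to $H_i(X,\bR)^G$ is an isomorphism onto $H_i(Y,\bR)$ (this follows from Lemma \ref{lem:simil}\ref{item:descentpairing}, or directly from the transfer argument), so $\mathrm{rank}\,H_i(X,\Z)^G = \mathrm{rank}\,H_i(Y,\Z)$, giving finiteness.

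For independence of $g$, the key point is to relate the index to a determinant computed against $f_*$. Pick a $\Z$-basis of $\modtors{H_i(Y,\Z)}$ and a $\Z$-basis of $\modtors{H_i(X,\Z)^G}$; then $|\det \bar g| = \lambda_i(X,G,g)$. Now $f_*$ induces $\bar f_*\colon \modtors{H_i(X,\Z)^G}\to \modtors{H_i(Y,\Z)}$, and the defining relation $f_* g = \#G\cdot\mathrm{id}$ descends to $\bar f_* \bar g = \#G\cdot\mathrm{id}$ on $\modtors{H_i(Y,\Z)}$. Taking determinants with respect to the chosen bases yields $|\det\bar f_*|\cdot|\det\bar g| = (\#G)^{r}$ where $r = \mathrm{rank}\,H_i(Y,\Z)$. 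Hence
$$
\lambda_i(X,G,g) = |\det\bar g| = \frac{(\#G)^r}{|\det \bar f_*|},
$$
and the right-hand side depends only on $X$, $Y$, $G$, and $i$ — not on $g$. This also re-proves finiteness, since $\det\bar f_*\neq 0$.

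The main obstacle I anticipate is purely bookkeeping: one must be careful that $g$ genuinely descends to the torsion-free quotients and that the determinant identity $\bar f_*\bar g = \#G\cdot\mathrm{id}$ is taken on the \emph{same} lattice $\modtors{H_i(Y,\Z)}$ on both sides, so that $\det$ is well-defined independent of basis choice up to sign and the ratio $(\#G)^r/|\det\bar f_*|$ is an honest invariant. There is a small subtlety that $g(H_i(Y,\Z))$ need not be contained in a complement of $H_i(X,\Z)^G_{\tors}$, which is exactly why the index in the statement is taken of $g(H_i(Y,\Z)) + H_i(X,\Z)^G_{\tors}$; once one passes to $\modtors{H_i(X,\Z)^G}$ this disappears and the argument is clean. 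No genuinely hard input is needed beyond the rank computation, for which Lemma \ref{lem:simil} already suffices.
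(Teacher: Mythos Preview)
Your argument is correct but takes a different route from the paper. The paper's proof is shorter and establishes a stronger statement: rather than computing the index, it shows directly that the \emph{subgroup} $g(H_i(Y,\Z)) + H_i(X,\Z)^G_{\tors}$ is itself independent of $g$. Indeed, if $g'$ is another such map then $f_*(g-g')=0$, and since $f_*\otimes\Q$ is an isomorphism this forces $\Im(g-g')\subset H_i(X,\Z)^G_{\tors}$, so the two subgroups coincide. Finiteness then follows immediately from $g\otimes\Q$ being an isomorphism. Your approach, by contrast, passes to torsion-free quotients and uses the determinant identity coming from $\bar f_*\bar g = \#G\cdot\mathrm{id}$ to obtain the explicit formula $\lambda_i = (\#G)^r/|\det\bar f_*|$. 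This is a genuine bonus: it expresses $\lambda_i$ in terms of $f_*$ alone, which could be useful elsewhere. The cost is a bit more bookkeeping (checking that everything descends cleanly to the torsion-free quotients, and that the ranks agree so that $\det\bar f_*\neq 0$), whereas the paper's argument needs only the single observation about $\Im(g-g')$.
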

\begin{proof}The quantity~$\lambda_i(X,G,g)$ is finite since
$g\otimes \Q\colon H_i(Y,\Q)\rar H_i(X,\Q)^{G}$ is an isomorphism.
If~$g'$ is another map satisfying the conditions of the lemma,
then~$f_*(g-g') = 0$. It follows from this, and from the fact
that~$f_*\otimes\Q$ is
an isomorphism, that~$\Im(g-g')\subset \ker f_* \subset
H_i(X,\Z)^G_{\tors}$.
Therefore
$$
g(H_i(Y,\Z)) + H_i(X,\Z)^{G}_{\tors}=
g'(H_i(Y,\Z)) + H_i(X,\Z)^{G}_{\tors},
$$
which proves the lemma.
\end{proof}

Given a finite $G$-covering $X\rar Y$ and $i$ as in Lemma \ref{lem:index},
we define $\lambda_i(X,G)=\lambda_i(X,G,g)$ for any $g$ satisfying
the hypotheses of the lemma.

\begin{theorem}\label{thm:regcomparison}
Let $X\to Y$ be a~$G$-covering of Riemannian manifolds, let~$\Theta=\sum_j
n_jU_j$ be a $G$-relation (see Definition~\ref{defn:relation}), and let~$i\ge 0$
be an integer. Then we have
$$
\Reg_i(X/\Theta)^2 = \cC_{\Theta}(H_i(X,\Z))\cdot\lambda_i(X,\Theta)^2.
$$
\end{theorem}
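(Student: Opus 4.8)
The plan is to express both sides of the claimed identity as products over the subgroups $U_j$ of local data at each $U_j$, and to match them term by term using the machinery of Lemma~\ref{lem:simil} and Lemma~\ref{lem:index}. For a subgroup $U\leq G$, write $f_U\colon X\to X/U$ for the covering map and abbreviate $H = H_i(X,\Z)$, $V = \cH^i(X)^\lor$. By Notation~\ref{not:pairings}\ref{rmrk:equivariant}, $h_X^i\colon H\to V$ is a map of $G$-modules and an isometry onto its image; passing to $U$-invariants, $h_X^i$ restricts to an injection $H^U\to V^U$ whose kernel is $H^U_{\tors}$ and whose image spans $V^U$ over $\bR$. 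The first step is to identify the pairing $\langle\cdot,\cdot\rangle^i_{X/U}$ on $H_i(X/U,\Z)$ with a rescaling of the pairing on $H^U$ induced by $h_X^i$: by Lemma~\ref{lem:simil}\ref{item:descentpairing} the pushforward $f_{U*}\colon H^U\to H_i(X/U,\Z)$ is a similitude with factor $\#U$ (applying the lemma to the $U$-covering $X\to X/U$), so after dividing the pairing on $H^U$ by $\#U$ we get an isometric copy of $\modtors{H_i(X/U,\Z)}$ sitting inside $\modtors{H^U}$ as a finite-index sublattice — the index being exactly $\lambda_i(X,U)$ by the argument of Lemma~\ref{lem:index} and the comparison map $g_i$ of Lemma~\ref{lem:exconvenient}.

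The second step is bookkeeping of covolumes. By Lemma~\ref{lem:regdet}, $\Reg_i(X/U)^2 = \det\langle\cdot,\cdot\rangle^i_{X/U}$ computed on any $\Z$-basis of $\modtors{H_i(X/U,\Z)}$; transporting through the similitude $f_{U*}$ of factor $\#U$ and accounting for the sublattice index, this equals
$$
\Reg_i(X/U)^2 = \lambda_i(X,U)^2\cdot \det\!\left(\tfrac{1}{\#U}\langle\cdot,\cdot\rangle\Big|\modtors{(H^U)}\right),
$$
where $\langle\cdot,\cdot\rangle$ denotes the pairing on $H$ pulled back from $V$ via $h_X^i$ — which is a pairing on $H$ in the sense fixed before the definition of regulator constants, since it is $G$-invariant, $\Z$-bilinear, $\bR$-valued, and non-degenerate on $\modtors H$ by Lemma~\ref{lem:regdet} applied to $X$ itself. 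Now raise to the power $n_j$, take the product over $j$, and invoke Notation~\ref{not:BurnsideFunctions}: the product of the determinant factors is by definition $\cC_\Theta(H_i(X,\Z))$ (using this particular pairing, which is legitimate by Theorem~\ref{thm:regconst} on pairing-independence), the product of the $\lambda_i(X,U_j)^{2n_j}$ is $\lambda_i(X,\Theta)^2$ by the definition of $\lambda_i(X,\Theta)$ as the Burnside-extension of $U\mapsto\lambda_i(X,U)$, and the left side is $\Reg_i(X/\Theta)^2$ by the same extension formalism. This yields the theorem.

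The main obstacle, and the step requiring the most care, is the identification of the sublattice index with $\lambda_i(X,U)$ and the verification that it behaves multiplicatively to give $\lambda_i(X,\Theta)$. Concretely: $f_{U*}$ maps $\modtors{(H^U)}$ to a full-rank sublattice of $\modtors{H_i(X/U,\Z)}$, and one must check the index of that image equals $\lambda_i(X,U)$. This is where Lemma~\ref{lem:exconvenient} enters: the composite $g_i$ there satisfies $f_{U*}g_i = \#U\cdot\mathrm{id}$, so $g_i$ provides an explicit section up to $\#U$, and the index $(H^U : g_i(H_i(X/U,\Z)) + H^U_{\tors}) = \lambda_i(X,U)$ is precisely the discrepancy between the full invariant lattice and the "descended" one; dualizing via Poincaré duality (which is a $G$-module isomorphism since our automorphisms are orientation-preserving, Notation~\ref{not:pairings}\ref{rmrk:equivariant}) identifies this with the cokernel index on the homology side. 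One should also double-check that all the similitude factors $\#U$ cancel correctly against the $1/\#U$ normalization in the definition of $\cC_\Theta$ — this is a finite computation but it is the place where a stray factor of $\#U^{\pm 1}$ could creep in, so it deserves explicit attention — and that the single fixed choice of pairing $\langle\cdot,\cdot\rangle$ on $H$ descends compatibly to every $X/U_j$ simultaneously, which it does because it is pulled back from the intrinsic pairing on $\cH^i(X)^\lor$ that is functorial in the covering.
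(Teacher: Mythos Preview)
Your overall strategy matches the paper's exactly: fix the $G$-invariant pairing $\langle\cdot,\cdot\rangle^i_X$ on $H_i(X,\Z)$, compute each factor $\det\bigl(\tfrac{1}{\#U_j}\langle\cdot,\cdot\rangle^i_X\mid \modtors{H_i(X,\Z)^{U_j}}\bigr)$ in terms of $\Reg_i(X/U_j)^2$ and $\lambda_i(X,U_j)$ via a similitude from Lemma~\ref{lem:simil}, then take the product over $j$. The displayed identity in your second paragraph is exactly the identity the paper proves, and from there the conclusion follows as you say.

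There is, however, a genuine directional confusion in how you justify that identity. You invoke Lemma~\ref{lem:simil}\ref{item:descentpairing}, the pushforward $f_{U*}\colon H_i(X,\Z)^U\to H_i(X/U,\Z)$, and then assert that this produces a copy of $\modtors{H_i(X/U,\Z)}$ \emph{inside} $\modtors{H_i(X,\Z)^U}$ with index $\lambda_i(X,U)$. But $f_{U*}$ goes the other way: its image is a sublattice of $\modtors{H_i(X/U,\Z)}$, and the index of that image is not $\lambda_i(X,U)$ but rather $(\#U)^r/\lambda_i(X,U)$ where $r$ is the rank (this follows from $f_{U*}g=\#U\cdot\mathrm{id}$). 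Your final paragraph explicitly sets up the wrong check (``one must check the index of that image equals $\lambda_i(X,U)$''), which would fail.

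The fix, and what the paper actually does, is to use part~\ref{item:ascentpairing} of Lemma~\ref{lem:simil} instead: the map $g\colon H_i(X/U,\Z)\to H_i(X,\Z)^U$ of Lemma~\ref{lem:exconvenient} is itself a similitude with factor $\#U$, and $\lambda_i(X,U)$ is \emph{by definition} the index of $g(H_i(X/U,\Z))+H_i(X,\Z)^U_{\tors}$ in $H_i(X,\Z)^U$. One then passes from a basis of $\modtors{H_i(X,\Z)^{U_j}}$ to a basis of $\modtors{g(H_i(X/U_j,\Z))}$ (picking up $\lambda_i(X,U_j)^{-2}$) and applies the similitude $g$ to land on $\Reg_i(X/U_j)^2$. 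No Poincar\'e duality argument or extra cancellation of $\#U$-factors is needed.
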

\begin{proof}
By Lemma \ref{lem:regdet}, the pairing $\langle\cdot,\cdot\rangle_X^i$ on
$H_i(X,\Z)$ is non-degenerate on the quotient modulo torsion. Moreover,
by Lemma \ref{lem:simil} (\ref{item:descentpairing}) applied to
$f=\sigma$ for each $\sigma\in G$, this pairing is $G$-invariant.
For any $U_j\leq G$, let~$g$ be any map satisfying the conditions of Lemma
\ref{lem:index} for the covering~$X\to X/U_j$. Then we have
\begin{align*}
  \MoveEqLeft{\det\left(\frac{1}{\#U_j}\langle\cdot,\cdot\rangle_X^i \mid \modtors{H_i(X,\Z)^{U_j}}\right)}\\
& =
  \det\left(\frac{1}{\#U_j}\langle\cdot,\cdot\rangle_X^i \mid
  \modtors{g(H_i(X/U_j,\Z))}\right) / \lambda_i(X,U_j)^2\\
  & = \det\left(\langle\cdot,\cdot\rangle_{X/U_j}^i \mid \modtors{H_i(X/U_j,\Z)}\right) / \lambda_i(X,U_j)^2\\
& = \Reg_i(X/U_j)^2 / \lambda_i(X,U_j)^2,
\end{align*}
where the second equality follows from Lemma \ref{lem:simil}(\ref{item:ascentpairing}),
and the last one from Lemma \ref{lem:regdet}.
This proves the theorem, by taking the product over $j$ corresponding to $\Theta$.
\end{proof}

\begin{corollary}\label{cor:modsquares}
Let $X\to Y$ be a~$G$-covering of Riemannian manifolds, let~$\Theta=\sum_j
n_jU_j$ be a $G$-relation, and let~$i\ge 0$ be an 
integer. Then we have
$\Reg_i(X/\Theta)^2\equiv \cC_{\Theta}(H_i(X,\Q)) \pmod{(\Q^\times)^2}$.
\end{corollary}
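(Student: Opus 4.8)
The plan is to deduce Corollary~\ref{cor:modsquares} directly from Theorem~\ref{thm:regcomparison} by reducing everything modulo squares of rational numbers, using the compatibility of regulator constants with extension of scalars from $\Z$ to $\Q$.

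\medskip

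First I would start from the identity $\Reg_i(X/\Theta)^2 = \cC_\Theta(H_i(X,\Z))\cdot \lambda_i(X,\Theta)^2$ provided by Theorem~\ref{thm:regcomparison}. Since $\lambda_i(X,\Theta)$ is, by Notation~\ref{not:BurnsideFunctions}, a product $\prod_j \lambda_i(X,U_j)^{n_j}$ of \emph{rational numbers}, the factor $\lambda_i(X,\Theta)^2$ is a square in $\Q^\times$, hence trivial in $\Q^\times/(\Q^\times)^2$. So the congruence will follow once I show
\[
\cC_\Theta(H_i(X,\Z)) \equiv \cC_\Theta(H_i(X,\Q)) \pmod{(\Q^\times)^2}.
\]
Here I should be careful about the ambient group in which these classes live: $\cC_\Theta(H_i(X,\Z))$ is an honest element of $\Q^\times$ (as noted after Theorem~\ref{thm:regconst}, since $(\Z^\times)^2=\{1\}$), while $\cC_\Theta(H_i(X,\Q))$ is a class in $\Q^\times/(\Q^\times)^2$; the statement of the corollary is the equality of their images in $\Q^\times/(\Q^\times)^2$.

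\medskip

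The key step is therefore comparing the regulator constant of the $\Z[G]$-module $A=H_i(X,\Z)$ with that of the $\Q[G]$-module $\Q\otimes_\Z A = H_i(X,\Q)$. Fix a pairing $\langle\cdot,\cdot\rangle$ on $A$ in the sense of the paper, taking values in $\Q$ (possible, as remarked); this extends by $\Q$-bilinearity to a pairing on $H_i(X,\Q)$, and the latter may be used to compute $\cC_\Theta(H_i(X,\Q))$ by Theorem~\ref{thm:regconst}. For each $j$, one has a canonical isomorphism $\Q\otimes_\Z \modtors{(A^{U_j})} \cong (H_i(X,\Q))^{U_j}$ (taking $U_j$-invariants commutes with the flat base change $\Z\to\Q$, and tensoring kills the torsion), and a $\Z$-basis of $\modtors{(A^{U_j})}$ becomes a $\Q$-basis of $(H_i(X,\Q))^{U_j}$ under this identification. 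Hence the matrix of $\tfrac{1}{\#U_j}\langle\cdot,\cdot\rangle$ on $(H_i(X,\Q))^{U_j}$ in this basis is literally the same matrix as the one on $\modtors{(A^{U_j})}$ used to compute $\cC_\Theta(A)$. Taking determinants and the product over $j$ with exponents $n_j$ shows the two regulator constants are represented by the \emph{same} element of $\Q^\times$; a fortiori they agree in $\Q^\times/(\Q^\times)^2$. Combining this with the previous paragraph gives $\Reg_i(X/\Theta)^2 \equiv \cC_\Theta(H_i(X,\Q)) \pmod{(\Q^\times)^2}$, as required.

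\medskip

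I do not expect a serious obstacle here; the only point requiring a little care is the bookkeeping of which group ($\Q^\times$ versus $\Q^\times/(\Q^\times)^2$) each regulator constant naturally lives in, and checking that the base-change isomorphism $(\Q\otimes_\Z A)^{U_j} \cong \Q\otimes_\Z (A^{U_j})$, composed with killing torsion, carries an integral basis of $\modtors{(A^{U_j})}$ to a rational basis of the invariants without introducing any determinant scaling. Once that is in place, the corollary is immediate from Theorem~\ref{thm:regcomparison} together with the observation that $\lambda_i(X,\Theta)$ is rational, hence its square is trivial modulo $(\Q^\times)^2$.
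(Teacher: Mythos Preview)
Your proof is correct and is precisely the argument the paper intends: the corollary is stated without proof immediately after Theorem~\ref{thm:regcomparison}, and your deduction---that $\lambda_i(X,\Theta)^2\in(\Q^\times)^2$ because each $\lambda_i(X,U_j)$ is a positive integer, together with the observation that $\cC_\Theta(H_i(X,\Z))$ and $\cC_\Theta(H_i(X,\Q))$ may be computed using the same pairing and the same basis---is exactly what is meant.
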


Let $U$ be a group. The abelianisation of $U$ is the quotient
$U^{\ab}=U/[U,U]$, where $[U,U]=\langle uvu^{-1}v^{-1}\colon u, v\in U\rangle$, i.e.
the maximal abelian quotient of~$U$.

\begin{corollary}\label{cor:regcomparison}
Let $f\colon X\rar Y$ be a finite $G$-covering of Riemannian $d$-manifolds,
let $\Theta=\sum_j n_jU_j$ be a $G$-relation,
let $p$ be a prime, and let~$i\ge 0$ be an integer.
Assume that one of the following holds:
\begin{enumerate}
  \item $i=d-1$ and for each subgroup $U_j\leq G$, 
    the order of $U_j^{\ab}$ is not divisible by~$p$;
  \item for each subgroup $U_j\leq G$, 
    the order of $U_j$ is not divisible by~$p$.
\end{enumerate}
Then $\ord_p(\Reg_i(X/\Theta)^2) = \ord_p(\cC_{\Theta}(H_i(X,\Z))) = \ord_p(\cC_{\Theta}(H_i(X,\Z_{(p)})))$.

In particular, for all~$i\ge 0$ the rational number~$\Reg_i(X/\Theta)^2$ is a product of the primes dividing the
order of~$G$.
\end{corollary}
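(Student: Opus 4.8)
The plan is to deduce everything from Theorem~\ref{thm:regcomparison}, which says $\Reg_i(X/\Theta)^2 = \cC_{\Theta}(H_i(X,\Z))\cdot\lambda_i(X,\Theta)^2$. Since $\lambda_i(X,\Theta)$ is a product of integers $\lambda_i(X,U_j)^{\pm n_j}$, the factor $\lambda_i(X,\Theta)^2$ contributes an even $p$-adic valuation that I must show is in fact zero under either hypothesis; granting that, the first equality $\ord_p(\Reg_i(X/\Theta)^2) = \ord_p(\cC_{\Theta}(H_i(X,\Z)))$ follows. So the first task is to bound the primes dividing $\lambda_i(X,U_j)$. The key observation is that the index $\lambda_i(X,U)$ of $g(H_i(X/U,\Z)) + H_i(X,\Z)^{U}_{\tors}$ in $H_i(X,\Z)^{U}$ measures the failure of $\Z$-coefficient descent along the covering $X\to X/U$, and such failure is controlled by the cohomology of $U$ with coefficients in the homology of $X$. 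Concretely, I would use the Cartan--Leray spectral sequence (or the transfer argument: $f^* f_* $ and $f_* f^*$ are multiplication by $\#U$ on the relevant groups) to show that $\lambda_i(X,U)$ divides a power of $\#U$ in general, and — for the sharper statement in case (1), where $i = d-1$ — that it divides a power of $\#U^{\ab} = \#H_1(U,\Z)$. The latter refinement is where Poincaré duality enters: by Lemma~\ref{lem:exconvenient} the map $g_{d-1}$ is built from $D_X, f^*, D_Y^{-1}$, so the cokernel in degree $d-1$ is identified (via duality) with a cokernel in degree $1$, i.e. with a quotient of $H^1(U, H_0(X,\Z)) = H^1(U,\Z) = \Hom(U^{\ab},\Q/\Z)$'s Pontryagin-type partner, hence is annihilated by $\exp(U^{\ab})$ and so has order dividing a power of $\#U^{\ab}$.

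Once $\ord_p(\lambda_i(X,\Theta)^2) = 0$ is established in each case, the middle term is handled: $\ord_p(\cC_{\Theta}(H_i(X,\Z)))$ only depends on the $p$-adic valuation, which is computed from any pairing, and localising at $p$ does not change it. Precisely, $H_i(X,\Z_{(p)}) = H_i(X,\Z)\otimes_\Z \Z_{(p)}$, and for any subgroup $U$ the module $\modtors{(H_i(X,\Z_{(p)})^{U})}$ is obtained from $\modtors{(H_i(X,\Z)^{U})}$ by tensoring with $\Z_{(p)}$ (taking $U$-invariants commutes with the flat base change $\Z\to\Z_{(p)}$, and killing $\Z$-torsion commutes with killing $\Z_{(p)}$-torsion since $p\nmid$ nothing is lost). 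A $\Z$-basis of the former is a $\Z_{(p)}$-basis of the latter, so the determinant defining the $j$-th factor of $\cC_{\Theta}$ is literally the same real number, and in particular has the same $p$-adic valuation. This gives $\ord_p(\cC_{\Theta}(H_i(X,\Z))) = \ord_p(\cC_{\Theta}(H_i(X,\Z_{(p)})))$.

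For the final sentence, apply Corollary~\ref{cor:modsquares} together with Proposition~\ref{prop:orthoreg}: the rational representation $H_i(X,\Q)$ decomposes into irreducibles, those not appearing in any $\Q[G/U_j]$ contribute trivially to $\cC_{\Theta}$, and the remaining ones are summands of permutation modules $\Q[G/U_j]$, whose regulator constants — by the explicit formula of the regulator-constant definition or by Example~\ref{ex:regconsttriv} applied after Frobenius reciprocity — are products of powers of $\#U_j$, hence of primes dividing $\#G$. Thus $\Reg_i(X/\Theta)^2$ is a rational number supported on primes dividing $\#G$.

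The main obstacle will be the sharp index bound in case (1): proving that $\lambda_{d-1}(X,U)$ is only divisible by primes dividing $\#U^{\ab}$, rather than the easy bound by primes dividing $\#U$. This requires carefully tracing the Poincaré-duality identification of Lemma~\ref{lem:exconvenient} through the low-degree terms of the relevant spectral sequence (or a direct transfer/$5$-term exact sequence argument) to see that the obstruction to descent in degree $d-1$ lives in $H^1(U,\Z)$ and is therefore $\#U^{\ab}$-torsion; the degree-$0$ coefficient module $H_0(X,\Z)\cong\Z$ is what makes this work, and one must check no higher differentials spoil it, which for degree $d-1 = (\text{top})-1$ is a boundary-of-the-range phenomenon that needs a clean argument.
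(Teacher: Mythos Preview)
Your overall strategy is exactly the paper's: reduce to Theorem~\ref{thm:regcomparison}, then show each $\lambda_i(X,U_j)$ is prime to~$p$ by combining Lemma~\ref{lem:exconvenient} (so that the cokernel of $g_i$ is identified, via Poincar\'e duality, with the cokernel of $f^*\colon H^{d-i}(X/U_j,\Z)\to H^{d-i}(X,\Z)^{U_j}$) with the Cartan--Leray spectral sequence. The second equality, via flat base change $\Z\to\Z_{(p)}$, is also handled as in the paper.

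There are two points where your write-up goes wrong. First, in case~(1) you place the obstruction in $H^1(U,\Z)$, but for a finite group $H^1(U,\Z)=\Hom(U,\Z)=0$, so this cannot be the source of a $\#U^{\ab}$-bound. The five-term exact sequence of the Cartan--Leray spectral sequence in cohomology reads
\[
0\longrightarrow H^1(U,\Z)\longrightarrow H^1(X/U,\Z)\stackrel{f^*}{\longrightarrow} H^1(X,\Z)^U\longrightarrow H^2(U,\Z),
\]
so the cokernel of $f^*$ injects into $H^2(U,\Z)$, and it is the universal coefficient identification $H^2(U,\Z)\cong H_1(U,\Z)=U^{\ab}$ that gives the bound. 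No check on higher differentials is needed: the five-term sequence is always exact for a first-quadrant spectral sequence.

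Second, your argument for the final ``in particular'' does not close. Corollary~\ref{cor:modsquares} only controls $\Reg_i(X/\Theta)^2$ modulo $(\Q^\times)^2$, so showing that $\cC_\Theta(H_i(X,\Q))$ is supported on primes dividing $\#G$ (a statement that is itself only meaningful mod squares, since over $\Q$ the regulator constant lives in $\Q^\times/(\Q^\times)^2$) cannot yield the exact claim. The intended route is simply to apply case~(2) to every prime $p\nmid\#G$: then all $\#U_j$ are prime to $p$, the equalities just proved give $\ord_p(\Reg_i(X/\Theta)^2)=\ord_p(\cC_\Theta(H_i(X,\Z_{(p)})))$, and the latter vanishes because over $\Z_{(p)}$ with $p\nmid\#G$ the averaging idempotents $e_{U_j}=\tfrac{1}{\#U_j}\sum_{u\in U_j}u$ lie in $\Z_{(p)}[G]$ and give orthogonal splittings for any $G$-invariant pairing, so one can choose a unimodular $\Z_{(p)}$-valued pairing and see that each determinant is a $p$-unit. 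Your detour through Proposition~\ref{prop:orthoreg} and Frobenius reciprocity is not needed and, as written, does not give the exact statement.
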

\begin{proof}
  Let $U$ be any subgroup of $G$. Let~$g_i=D_Xf^*D_{X/U}^{-1}$ be the map of
Lemma~\ref{lem:exconvenient}.
In case~(1), the 5-term exact sequence coming from the Cartan--Leray spectral sequence
in cohomology (see \cite[Ch. VII, \S 7]{Brown}) is
$$
0\rar H^1(U,\Z)\rar H^1(X/U,\Z)\stackrel{f^*}{\rar} H^1(X,\Z)^U\rar H^2(U,\Z).
$$
By the universal coefficient theorem, we have $H^2(U,\Z)\cong H_1(U,\Z) =
U^{\ab}$, which has order coprime to $p$ by assumption.
In case~(2), the group~$U$ has
cohomological dimension~$0$ over~$\Z_{(p)}$, since~$p$ does not divide its order,
so by the Cartan--Leray spectral
sequence we have an isomorphism
\[
  H^{d-i}(X/U,\Z_{(p)})\stackrel{f^*}{\rar} H^{d-i}(X,\Z_{(p)})^U.
\]

In both cases, it follows that $f^*\colon H^{d-i}(X/U,\Z)\rar H^{d-i}(X,\Z)^U$
has cokernel of order coprime to $p$, and since the Poincar\'e duality maps
$D_X$ and~$D_{X/U}$ are isomorphisms, the cokernel of
$g_i\colon H_{i}(X/U,\Z)\rar H_{i}(X,\Z)^U$ is also of order coprime to $p$.
The first equality therefore follows from Theorem~\ref{thm:regcomparison}.
For the second equality, note that firstly,
$H_i(X,\Z_{(p)})\cong H_i(X,\Z) \otimes_{\Z} \Z_{(p)}$, and secondly that for an
arbitrary $\Z[G]$-module $A$, we have $\ord_p(\cC_{\Theta}(A)) = 
\ord_p(\cC_{\Theta}(A\otimes_{\Z}\Z_{(p)}))$.
\end{proof}

\begin{remark}\label{rmrk:noclue}
We do not know whether the regulator quotient is a purely representation
theoretic invariant in full generality without the restrictive assumptions of
Corollary \ref{cor:regcomparison}, that is whether it only depends
on the $\Z[G]$-module structure of $H_i(X,\Z)$.
This seems to us to be an interesting question.
\end{remark}

\section{Some regulator constant calculations}\label{sec:regconst}
In this section, we compute the regulator constants of rational representations
with respect to the relations of Examples \ref{ex:2-Gassman} and \ref{ex:p-Gassman}.

First, let $G=\GL_2(\F_p)$.
Denote by~$B$ the subgroup of upper-triangular matrices in~$G$,
let~$w = \left(\begin{smallmatrix}0 & 1 \\ 1 & 0\end{smallmatrix}\right)$,
and let~$U_1$, respectively $U_2$ be the subgroup of matrices in~$B$ with square
lower right, respectively upper left entry. Then~$\Theta = U_1 - U_2$ is the
$G$-relation of Example \ref{ex:p-Gassman}.
In this section, we compute the regulator constants of the
rational irreducible representations of $G$ with respect to $\Theta$.

We begin by recalling the classification of complex representations of~$G$.
For any 1-dimensional representations $\mu_1$, $\mu_2$ of~$\F_p^\times$,
let~$r_{\mu_1,\mu_2}$ be the $1$-dimensional representation of~$B$
defined by $r_{\mu_1,\mu_2}\colon
\left(\begin{smallmatrix}a & b \\ 0 & d\end{smallmatrix}\right) \mapsto \mu_1(a)\mu_2(d)$.
\begin{proposition}\label{prop:classif}
  The following is a classification of the complex irreducible representations
  of~$G$:
  \begin{itemize}
    \item $1$-dimensional representations: they are all of the form~$\mu\circ\det$,
      where~$\mu$ is an irreducible character of~$\F_p^\times$.
    \item Irreducible principal series, of dimension~$p+1$: they are
      the inductions~$\rho(\mu_1,\mu_2) = \Ind_B^G r_{\mu_1,\mu_2}$,
      where~$\mu_1\neq\mu_2$; we have $\rho(\mu_1,\mu_2)\cong
      \rho(\mu_1',\mu_2')$ if and only if $\{\mu_1,\mu_2\}=\{\mu_1',\mu_2'\}$.
    \item Special representations, of dimension~$p$: they are
      the~$p$-dimensional summands~$\sigma(\mu)$ of~$\Ind_B^G r_{\mu,\mu}\cong
      \sigma(\mu)\oplus (\mu\circ\det)$.
    \item Cuspidal representations, of dimension~$p-1$: they are the irreducible
      representations of~$G$ that are not summands of any~$\Ind_B^G r_{\mu_1,\mu_2}$.
  \end{itemize}
\end{proposition}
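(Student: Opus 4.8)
The plan is to run the classical character-theoretic classification: exhibit all the representations on the list, prove irreducibility and the stated isomorphisms via Frobenius reciprocity and Mackey's formula, and then conclude by matching the number of irreducible representations with the number of conjugacy classes and checking the identity $\sum(\dim)^2 = \#G$. The only step that is not pure bookkeeping is determining the common dimension of the cuspidal representations, and for that I would use a restriction-to-$B$ argument.

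First I would count the conjugacy classes of $G = \GL_2(\F_p)$ by rational canonical form: central scalars ($p-1$ of them), single Jordan blocks ($p-1$), split semisimple elements with two distinct eigenvalues in $\F_p$ ($\binom{p-1}{2}$), and non-split semisimple elements, with eigenvalues a Galois-conjugate pair in $\F_{p^2}\setminus\F_p$ ($\binom{p}{2}$). This gives $(p-1)+(p-1)+\binom{p-1}{2}+\binom{p}{2}=p^2-1$ conjugacy classes, hence exactly $p^2-1$ irreducible complex representations.

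Next I would write down the representations. A $1$-dimensional representation factors through the abelianisation $G^{\ab}$, which is identified with $\F_p^\times$ by $\det$, so these are exactly the $p-1$ characters $\mu\circ\det$. For the induced representations I would compute the inner products $\langle\Ind_B^G r_{\mu_1,\mu_2},\Ind_B^G r_{\mu_1',\mu_2'}\rangle$ by Frobenius reciprocity and Mackey's formula, using the Bruhat decomposition $B\backslash G/B=\{1,w\}$ (with $w$ as above), the fact that $B\cap wBw^{-1}=T$ is the diagonal torus, and the fact that ${}^w r_{\mu_1,\mu_2}$ restricts on $T$ to $r_{\mu_1,\mu_2}$ with its two arguments interchanged; this reduces the computation to inner products of the characters $\mu_1\otimes\mu_2$ and $\mu_2\otimes\mu_1$ of $T$. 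From this one reads off that $\Ind_B^G r_{\mu_1,\mu_2}$ is irreducible of dimension $p+1$ exactly when $\mu_1\neq\mu_2$, with $\rho(\mu_1,\mu_2)\cong\rho(\mu_1',\mu_2')$ iff $\{\mu_1,\mu_2\}=\{\mu_1',\mu_2'\}$, and that when $\mu_1=\mu_2=\mu$ the $(p+1)$-dimensional $\Ind_B^G r_{\mu,\mu}$ is a sum of exactly two irreducibles, one being $\mu\circ\det$ (since $(\mu\circ\det)|_B=r_{\mu,\mu}$), the other a $p$-dimensional representation $\sigma(\mu)$; the same inner-product computation shows that the $\sigma(\mu)$ are irreducible and pairwise non-isomorphic, and they are distinguished from the characters and the principal series by their dimension. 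This produces $(p-1)+\binom{p-1}{2}+(p-1)=\frac{(p-1)(p+2)}{2}$ pairwise non-isomorphic irreducibles, which are precisely the irreducible constituents of the various $\Ind_B^G r_{\mu_1,\mu_2}$.

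The remaining $p^2-1-\frac{(p-1)(p+2)}{2}=\frac{p(p-1)}{2}$ irreducibles are, by definition, the cuspidal representations. Subtracting the contribution $(p-1)\cdot 1+\binom{p-1}{2}(p+1)^2+(p-1)p^2$ of the representations already found from $\#G=p(p-1)^2(p+1)$ shows that their dimensions $d_i$ satisfy $\sum_i d_i^2=\frac{p(p-1)}{2}(p-1)^2$. To conclude that $\dim V=p-1$ for every cuspidal $V$, I would restrict $V$ to $B$: by Frobenius reciprocity $\Res_B V$ contains no $1$-dimensional constituent, and since $T$ acts with a single orbit on the nontrivial characters of the unipotent radical $N$ of $B$, every irreducible representation of $B=T\ltimes N$ has dimension $1$ or $p-1$; hence $(p-1)\mid\dim V$, so $\dim V\ge p-1$, and the sum-of-squares identity over $\frac{p(p-1)}{2}$ terms then forces $\dim V=p-1$. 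I expect the main obstacle to be organising the Mackey computation for $\Ind_B^G r_{\mu_1,\mu_2}$ cleanly; beyond that, the only genuinely non-formal input is the description of the irreducible representations of $B$ used in the last step.
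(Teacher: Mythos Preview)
Your argument is correct and is the classical character-theoretic route to the classification. The paper does not give its own proof of this proposition: it simply cites Piatetski--Shapiro \cite{PShapiro}, and what you have outlined is essentially the standard argument one finds in that and similar references (count conjugacy classes, analyse $\Ind_B^G r_{\mu_1,\mu_2}$ via Mackey and the Bruhat decomposition, and pin down the cuspidal dimensions by restriction to $B$ together with the sum-of-squares identity). So there is nothing to compare in terms of strategy; you have supplied a proof where the paper defers to the literature.
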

\begin{proof}
See \cite{PShapiro}.
\end{proof}
Let $\St$ denote the complex irreducible representation
$\sigma(\triv)$, the Steinberg representation of $G$, and let
$I$ denote the complex irreducible representation~$\rho(\chi,\triv)$,
where $\chi$ is the unique character of $\F_p^\times$ of order 2.
Note that both $\St$ and $I$ are realisable over $\Q$, so in fact give rise to
irreducible $\Q[G]$-representations.

\begin{proposition}\label{prop:regconstpGassman}
Let $\rho$ be an irreducible $\Q[G]$-representation. Then\newline
$\cC_{\Theta}(\rho)\equiv p \mod{(\Q^\times)^2}$ if $\rho\cong I$, and
$\cC_{\Theta}(\rho)\equiv 1 \mod{(\Q^\times)^2}$ otherwise.
\end{proposition}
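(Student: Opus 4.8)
The plan is to compute $\cC_\Theta(\rho)$ for each irreducible $\Q[G]$-representation $\rho$ by exploiting the general tools from Section \ref{sec:Brauer}, so that almost all representations are dispatched without any genuine computation, leaving only a couple of genuinely low-dimensional calculations. First I would observe that $\Theta = U_1 - U_2$ has both $U_1$ and $U_2$ contained in $B$, so $\Q[G/U_j] = \Ind_B^G \Q[B/U_j]$; since $\Q[B/U_j]$ decomposes as a sum of characters $r_{\mu_1,\mu_2}$ pulled up through $B^{\ab} = \F_p^\times \times \F_p^\times$, the permutation module $\Q[G/U_j]$ is a sum of $1$-dimensional representations of $G$ and principal series $\rho(\mu_1,\mu_2)$ and Steinberg-type summands $\sigma(\mu)$ — in particular \emph{no cuspidal representation} occurs in either $\Q[G/U_1]$ or $\Q[G/U_2]$. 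Hence by Proposition \ref{prop:orthoreg}, $\cC_\Theta(\rho) = 1$ for every cuspidal $\rho$.

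Next I would pin down exactly which non-cuspidal irreducibles actually appear in $\Q[G/U_1]$ and $\Q[G/U_2]$. The index $(B:U_j) = 2$, and $U_1$ (resp. $U_2$) is the kernel of the character $\left(\begin{smallmatrix}a&b\\0&d\end{smallmatrix}\right)\mapsto \chi(d)$ (resp. $\chi(a)$) where $\chi$ is the quadratic character of $\F_p^\times$. So $\Q[B/U_1] = \triv \oplus r_{\triv,\chi}$ and $\Q[B/U_2] = \triv \oplus r_{\chi,\triv}$ as $\Q[B]$-modules, whence $\Q[G/U_1] = \Ind_B^G\triv \oplus \Ind_B^G r_{\triv,\chi}$ and similarly for $U_2$. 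Now $\Ind_B^G\triv = \triv \oplus \St$ is common to both, and by Proposition \ref{prop:classif}, $\Ind_B^G r_{\triv,\chi} \cong \Ind_B^G r_{\chi,\triv} \cong \rho(\chi,\triv) = I$, which is irreducible. So in the Grothendieck group, $\Q[G/U_1] = \triv + \St + I = \Q[G/U_2]$ — consistent with $\Theta$ being a relation — but more importantly, the \emph{only} irreducible $\Q[G]$-representations that can have nontrivial regulator constant are $\triv$, $\St$, and $I$. For $\triv$: by Example \ref{ex:regconsttriv}, $\cC_\Theta(\triv) = \#U_1^{-1}\cdot\#U_2 = 1$ since $\#U_1 = \#U_2$.

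It remains to handle $\St$ and $I$, and this is where the real work lies; by Corollary \ref{cor:multiplicative}, $\cC_\Theta(\St)\cdot\cC_\Theta(I) = \cC_\Theta(\St \oplus I) = \cC_\Theta(\Q[G/U_1]/\triv)$, and since $\Q[G/U_1] = \triv \oplus (\St\oplus I)$ with $\cC_\Theta$ multiplicative and $\cC_\Theta(\triv)=1$, we get $\cC_\Theta(\St)\,\cC_\Theta(I) = \cC_\Theta(\Q[G/U_1])$. But $\Q[G/U_1] = \Ind_B^G \Q[B/U_1]$, so by Frobenius reciprocity for regulator constants (Proposition \ref{prop:Frobenius}), $\cC_\Theta(\Q[G/U_1]) = \cC_{\Res_B^G\Theta}(\Q[B/U_1])$. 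So I would compute $\Res_B^G\Theta = \Res_B^G U_1 - \Res_B^G U_2$ via Mackey's formula \eqref{eq:res} — a double-coset computation in $B\backslash G/U_j$ — and then evaluate the regulator constant of the explicit $\Q[B]$-module $\Q[B/U_1]$ with respect to this $B$-relation, which reduces to a determinant of an explicit bilinear form on spaces of dimension at most $(B:U_j)=2$. The upshot should be $\cC_\Theta(\St)\,\cC_\Theta(I) \equiv p \pmod{(\Q^\times)^2}$. To separate the two factors, I would compute $\cC_\Theta(\St)$ on its own: $\St$ is a summand of $\Ind_B^G\triv$, and $\Ind_B^G\triv - \triv - \St = 0$, while $\cC_\Theta(\Ind_B^G\triv) = \cC_{\Res_B^G\Theta}(\triv_B) = \prod$ (trivial, by Example \ref{ex:regconsttriv} applied to the $B$-relation $\Res_B^G\Theta$, whose coefficients sum appropriately) — leading to $\cC_\Theta(\St)\equiv 1$, hence $\cC_\Theta(I) \equiv p \pmod{(\Q^\times)^2}$. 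The main obstacle I anticipate is the Mackey double-coset bookkeeping for $\Res_B^G U_j$ together with correctly tracking which invariant sublattices/subspaces $\Q[B/U_j]^{D}$ appear for each $D$ occurring in $\Res_B^G\Theta$, and ensuring the resulting determinant genuinely evaluates to $p$ modulo squares rather than $1$ — equivalently, identifying the one place where $p$ enters, namely the index of $U_j$-fixed points or a Gram determinant governed by the $p$ appearing in $\#B = p(p-1)^2$; I would double-check this against Proposition \ref{prop:orthoreg} and the constraint $\sum_j n_j = 0$ (Lemma \ref{lem:degree0}) to make sure no factor of $p$ is lost or spuriously introduced.
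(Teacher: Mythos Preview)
Your plan is correct and follows essentially the same approach as the paper: reduce to the three summands $\triv$, $\St$, $I$ of $\Q[G/U_j]$ via Proposition~\ref{prop:orthoreg}, dispatch $\triv$ by Example~\ref{ex:regconsttriv}, and handle $\St$ and $I$ by Frobenius reciprocity (Proposition~\ref{prop:Frobenius}) together with the Mackey computation of $\Res_B^G\Theta$. The paper carries out the Mackey step explicitly via the Bruhat decomposition, obtaining $\Res_B^G\Theta = U_1 + T_2 - U_2 - T_1$ with $T_j = U_j\cap T$, and then computes $\cC_\Theta(I)$ \emph{directly} as $\cC_{\Res_B^G\Theta}(r_{\chi,\triv})$ (a one-dimensional module, so each term is just $1$ or $(\#U)^{-1}$), rather than your detour through the product $\cC_\Theta(\St)\cC_\Theta(I)$ --- a small streamlining, but the substance is the same.
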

\begin{proof}
We have $\C[G/U_1]\cong \C[G/U_2] \cong \triv\oplus \St\oplus I$. So if
$\rho$ is an irreducible $\Q[G]$-representation that is not isomorphic to
any of these three direct summands, then $\cC_{\Theta}(\rho)=1$ by
Proposition \ref{prop:orthoreg}. Moreover, we have $\cC_{\Theta}(\triv) = \#U_2/\#U_1=1$
by Example \ref{ex:regconsttriv}.

To compute $\cC_{\Theta}(\St)$, observe that by Corollary \ref{cor:multiplicative}
and by Proposition \ref{prop:Frobenius},
we have
\begin{align}\label{eq:Steinberg}
\cC_{\Theta}(\St)=\cC_{\Theta}(\St)\cC_{\Theta}(\triv) = \cC_{\Theta}(\Ind_B^G\triv) =
\cC_{\Res_B^G\Theta}(\triv).
\end{align}
Let $T_j=U_j\cap T$ for $j=1$, $2$. An elementary
calculation (or the Bruhat decomposition) shows that for $j=1$ and $2$,
$U_j\lquo G / B$ has cardinality 2,
with double coset representatives $1$ and~$w$. Moreover, we have
$w^{-1}U_1w\cap B = T_2$, and $w^{-1}U_2w\cap B = T_1$.
It follows from equation (\ref{eq:res}) that
$
\Res_B^G\Theta = U_1 + T_2 - U_2 - T_1.
$
We conclude from equation (\ref{eq:Steinberg}) and Example \ref{ex:regconsttriv}
that
$$
\cC_{\Theta}(\St) = \frac{\#U_2\cdot \#T_1}{\#U_1\cdot \#T_2} = 1 \mod{(\Q^\times)^2}.
$$

Similarly, we have
$$
\cC_{\Theta}(I) = \cC_{\Theta}(\Ind_B^Gr_{\chi,\triv}) =
\cC_{\Res_B^G\Theta}(r_{\chi,\triv}).
$$
Since $r_{\chi,\triv}$ is 1-dimensional, the contribution from each subgroup $U$
in $\Res_B^G\Theta$ is 1 if $(r_{\chi,\triv})^U$ is zero, and is $(\#U)^{-1}$
otherwise. It follows that $\cC_{\Theta}(I) \equiv \#U_2/\#T_2 \equiv p \mod{(\Q^\times)^2}$.
\end{proof}
\begin{corollary}\label{cor:impliesconj}
Let $p$ be an odd prime, let $G$, $B$, $U_1$, and $U_2$ be as above,
and let $X\rar Y$ be a $G$-covering of closed hyperbolic $3$-manifolds.
Then the following are congruent modulo 2:
\begin{enumerate}
  \item the $p$-adic valuation
    $\ord_p\left(\frac{\#H_1(X/U_1,\Z)_{\tors}}{\#H_1(X/U_2,\Z)_{\tors}}\right)$;
\item the multiplicity in $H_1(X,\C)$ of the irreducible $\C[G]$-representation
$I$, as defined after Proposition \ref{prop:classif};
\item the difference of the Betti numbers $b_1(X/U_1) - b_1(X/B)$.
\end{enumerate}
\end{corollary}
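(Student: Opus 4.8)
The plan is to read all three quantities off the single invariant $\cC_\Theta(H_1(X,\Q))$, where $\Theta = U_1-U_2$ is the $G$-relation of Example~\ref{ex:p-Gassman}, exploiting that its regulator constants have already been computed in Proposition~\ref{prop:regconstpGassman}. I would first dispose of the equality between (2) and (3), which in fact holds exactly, not just modulo~$2$. For any subgroup $U\le G$ a standard transfer argument identifies $H_1(X/U,\C)$ with $H_1(X,\C)^U$, so by Frobenius reciprocity $b_1(X/U) = \dim_\C\Hom_{\C[G]}(\C[G/U],H_1(X,\C))$ is the sum, over the irreducible constituents $\rho$ of the permutation module $\C[G/U]$, of the multiplicity of $\rho$ in $H_1(X,\C)$. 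By the proof of Proposition~\ref{prop:regconstpGassman} we have $\C[G/U_1]\cong\triv\oplus\St\oplus I$, while $\C[G/B]=\Ind_B^G\triv\cong\triv\oplus\St$ by Proposition~\ref{prop:classif}; subtracting, $b_1(X/U_1)-b_1(X/B)$ is exactly the multiplicity of $I$ in $H_1(X,\C)$, which is quantity~(2).

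Next I would relate (1) to the regulators. Since $U_1-U_2$ is a $G$-relation, Sunada's construction shows that $X/U_1$ and $X/U_2$ are strongly isospectral, so they have equal volume and the Cheeger--M\"uller identity~(\ref{eq:CheegerMuller}) holds for the pair. A closed oriented $3$-manifold has torsion-free homology outside degree~$1$, so using the Poincar\'e duality relations $\Reg_0=\Reg_3^{-1}$ and $\Reg_2=\Reg_1^{-1}$ (Notation~\ref{not:pairings}) together with $\Reg_3(X/U_j)^2=\vol(X/U_j)$ from Lemma~\ref{lem:H0}, equation~(\ref{eq:CheegerMuller}) collapses to
\[
\frac{\#H_1(X/U_1,\Z)_{\tors}}{\#H_1(X/U_2,\Z)_{\tors}}=\frac{\Reg_1(X/U_1)^2}{\Reg_1(X/U_2)^2}=\Reg_1(X/\Theta)^2 ;
\]
in particular this is a rational number, and applying $\ord_p$ shows that quantity~(1) equals $\ord_p(\Reg_1(X/\Theta)^2)$.

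Then I would compute this valuation modulo~$2$. By Corollary~\ref{cor:modsquares}, $\Reg_1(X/\Theta)^2\equiv\cC_\Theta(H_1(X,\Q))\pmod{(\Q^\times)^2}$. Decomposing $H_1(X,\Q)$ into $\Q[G]$-irreducibles and combining the multiplicativity of $\cC_\Theta$ (Corollary~\ref{cor:multiplicative}) with Proposition~\ref{prop:regconstpGassman}, according to which every rational irreducible constituent other than $I$ contributes a square while each copy of $I$ contributes a factor $p$, one gets $\cC_\Theta(H_1(X,\Q))\equiv p^{m}\pmod{(\Q^\times)^2}$, where $m$ is the multiplicity of $I$ in $H_1(X,\Q)$. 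Since $I$ is absolutely irreducible and realisable over $\Q$, $m$ is also the multiplicity of $I$ in $H_1(X,\C)$, i.e.\ quantity~(2). Hence $\ord_p(\Reg_1(X/\Theta)^2)\equiv m\pmod 2$, and (1), (2) and (3) are all congruent modulo~$2$.

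The hardest part will be the reduction of the Cheeger--M\"uller identity to the displayed formula for the torsion ratio: one has to keep track of exactly which homology groups of a $3$-manifold carry torsion, and ensure that the degree-$0$ and degree-$3$ contributions cancel, which relies on $\#U_1=\#U_2$ (hence $\vol(X/U_1)=\vol(X/U_2)$) and on the normalisations of the regulators in Lemma~\ref{lem:H0} and Notation~\ref{not:pairings}. Everything else is a formal consequence of results already established, the substantive input being the regulator constant computation of Proposition~\ref{prop:regconstpGassman}.
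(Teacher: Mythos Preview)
Your proposal is correct and follows essentially the same route as the paper's proof: both reduce the Cheeger--M\"uller identity for a hyperbolic $3$-manifold to $\#H_1(X/U_1,\Z)_{\tors}/\#H_1(X/U_2,\Z)_{\tors}=\Reg_1(X/\Theta)^2$, then invoke Corollary~\ref{cor:modsquares} together with the multiplicativity of regulator constants and Proposition~\ref{prop:regconstpGassman} to extract the multiplicity of $I$ modulo~$2$, and both obtain the (exact) equality of (2) and (3) from $\C[G/U_1]\cong\C[G/B]\oplus I$ and Frobenius reciprocity.
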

\begin{proof}
First, we show the equality between (1) and (2).
For closed hyperbolic $3$-manifolds, all homology groups with integral coefficients
except possibly the first are torsion-free. It therefore follows from equation
(\ref{eq:CheegerMuller}), Remark~\ref{rmrk:PoincareReg}, Lemma \ref{lem:H0}, and
Example \ref{ex:regconsttriv}, that
$$
\frac{\#H_1(X/U_1,\Z)_{\tors}}{\#H_1(X/U_2,\Z)_{\tors}} = \frac{\Reg_0(X/U_2)^2\Reg_1(X/U_1)^2}
{\Reg_0(X/U_1)^2\Reg_1(X/U_2)^2}=\frac{\Reg_1(X/U_1)^2}{\Reg_1(X/U_2)^2}.
$$
By Corollary \ref{cor:modsquares}, the $p$-adic valuation of the right hand
side is congruent to $\ord_p\cC_{\Theta}(H_1(X,\Z))\equiv \ord_p\cC_{\Theta}(H_1(X,\Q))\pmod{2}$,
where $\Theta=U_1-U_2$. Recall that the right hand side of that last congruence is
only well-defined modulo 2. The equality of (1) and (2) therefore follows from
Corollary \ref{cor:multiplicative} and Proposition \ref{prop:regconstpGassman}.

We now prove the equality between (2) and (3). Below, we will identify
representations of $G$ with their characters.
Let $\chi$ be an arbitrary rational representation of $G$.
We have
$I=\Ind_{U_1}^G\triv - \Ind_B^G\triv$, so the multiplicity of $I$ in $\chi$
is equal to
\begin{eqnarray*}
\langle I,\chi\rangle_G & = & \langle\Ind_{U_1}^G\triv -
\Ind_B^G\triv,\chi\rangle_G = \langle\triv,\Res_{U_1}\chi\rangle_{U_1} - 
\langle\triv,\Res_{B}\chi\rangle_{B}\\
& = &\dim_{\Q}\chi^{U_1} - \dim_{\Q}\chi^{B},
\end{eqnarray*}
where $\langle\cdot,\cdot\rangle_U$ denotes inner products of characters
of $U$.
The equality between (2) and (3) follows from this calculation,
with $\chi=H_1(X,\Q)$.
\end{proof}
As in Example \ref{ex:p-Gassman}, let
$N=\{\left(\begin{smallmatrix}a&0\\0&a\end{smallmatrix}\right)\colon a\in (\F_p^\times)^2\}$,
let $\bar{G}=G/N$, $\bar{U}_j=U_j/N$ for $j=1$, $2$, and let $\bar{\Theta}=\bar{U}_1-\bar{U}_2$,
which is a $\bar{G}$-relation. There is a canonical bijection between
irreducible $\Q[G]$-representations with kernel containing $N$ and irreducible
$\Q[\bar{G}]$-representations, given by projecting to the quotient. For any
irreducible $\Q[G]$-representation $\rho$ with kernel containing $N$, let $\bar{\rho}$
denote its image under this bijection.

\begin{corollary}
Let $\bar{\rho}$ be an irreducible $\Q[\bar{G}]$-representation. Then\newline
$\cC_{\bar{\Theta}}(\bar{\rho})\equiv p \mod{(\Q^\times)^2}$ if $\bar{\rho}\cong \bar{I}$,
and $\cC_{\bar{\Theta}}(\bar{\rho})\equiv 1 \mod{(\Q^\times)^2}$ otherwise.
\end{corollary}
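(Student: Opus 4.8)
The plan is to reduce the statement about the $\bar{G}$-relation $\bar{\Theta}$ to the already-established computation for the $G$-relation $\Theta = U_1 - U_2$ of Proposition~\ref{prop:regconstpGassman}, using inflation along the quotient map $\pi\colon G\to\bar{G}$. First I would observe that $\pi$ induces a map on Burnside groups sending $\bar{U}_j$ to the preimage $U_j$ (since $N\leq U_1\cap U_2$), and that this map carries the $\bar{G}$-relation $\bar{\Theta}$ to the $G$-relation $\Theta$; equivalently, inflation of permutation modules along $\pi$ sends $\Q[\bar{G}/\bar{U}_j]$ to $\Q[G/U_j]$. The key point is then a compatibility lemma: for a $\Q[\bar{G}]$-module $\bar{\rho}$ with inflation $\rho = \mathrm{Inf}_{\bar{G}}^G\bar{\rho}$, one has $\rho^{U_j} = \bar{\rho}^{\bar{U}_j}$ canonically (invariants under $U_j$ coincide with invariants under $U_j/N = \bar{U}_j$, since $N$ acts trivially), and $\#U_j = \#N\cdot\#\bar{U}_j$, so the determinant factors in the definition of the regulator constant differ by a factor of $(\#N)^{-n_j}$ for each $j$. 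Because $\bar{\Theta}$ is a relation we have $\sum_j n_j = 0$ by Lemma~\ref{lem:degree0}, so these factors cancel, giving $\cC_{\Theta}(\mathrm{Inf}_{\bar{G}}^G\bar{\rho}) = \cC_{\bar{\Theta}}(\bar{\rho})$.

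With this compatibility in hand, the corollary is immediate: given an irreducible $\Q[\bar{G}]$-representation $\bar{\rho}$, its inflation $\rho$ is an irreducible $\Q[G]$-representation with kernel containing $N$, and by the bijection described before the statement, $\bar{\rho} \cong \bar{I}$ if and only if $\rho \cong I$. Proposition~\ref{prop:regconstpGassman} then gives $\cC_{\Theta}(\rho) \equiv p \pmod{(\Q^\times)^2}$ when $\rho\cong I$ and $\cC_{\Theta}(\rho)\equiv 1$ otherwise, and the compatibility $\cC_{\bar{\Theta}}(\bar{\rho}) = \cC_{\Theta}(\rho)$ transports this verbatim to $\bar{\Theta}$.

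Alternatively, one could avoid invoking Proposition~\ref{prop:regconstpGassman} as a black box and instead rerun its proof directly for $\bar{G}$: one checks $\C[\bar{G}/\bar{U}_j] \cong \triv \oplus \bar{\St} \oplus \bar{I}$, applies Proposition~\ref{prop:orthoreg} to kill all other irreducibles, uses $\cC_{\bar{\Theta}}(\triv) = \#\bar{U}_2/\#\bar{U}_1 = 1$ from Example~\ref{ex:regconsttriv}, and computes $\cC_{\bar{\Theta}}(\bar{\St})$ and $\cC_{\bar{\Theta}}(\bar{I})$ via Frobenius reciprocity (Proposition~\ref{prop:Frobenius}) and the Bruhat decomposition for $\bar{G}$ exactly as in the proof of Proposition~\ref{prop:regconstpGassman}. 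But the inflation argument is cleaner and makes the structural reason transparent.

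The only mild subtlety — and the step I would be most careful about — is verifying that the regulator constant is genuinely insensitive to inflation, i.e. that the $(\#N)^{-n_j}$ discrepancy in the per-subgroup determinant factors is exactly cancelled by $\sum_j n_j = 0$. This is a short computation but it is where the hypothesis that $\bar{\Theta}$ is a relation (not just an arbitrary element of the Burnside group) is used, and it is worth stating as a named lemma since the same inflation-invariance is implicitly what lets one pass freely between $\GL_2(\F_p)$ and its central quotient throughout Section~\ref{sec:regconst}.
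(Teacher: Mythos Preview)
Your approach is exactly the paper's: it reduces to Proposition~\ref{prop:regconstpGassman} via the inflation identity $\cC_{\bar{\Theta}}(\bar{\rho}) = \cC_{\Theta}(\rho)$, for which the paper simply cites \cite[Proposition 2.45]{tamroot} (the same reference as for Proposition~\ref{prop:Frobenius}), whereas you supply a direct argument.

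There is, however, a slip in precisely the step you flagged as delicate. The $j$-th determinant factor is computed on the space $\rho^{U_j}=\bar{\rho}^{\bar{U}_j}$ of dimension $r_j$, so replacing $\tfrac{1}{\#U_j}$ by $\tfrac{1}{\#\bar{U}_j}$ scales the determinant by $(\#N)^{r_j}$, not by $\#N$. Hence the total discrepancy is $(\#N)^{-\sum_j n_j r_j}$, and what you need is $\sum_j n_j r_j = 0$, not $\sum_j n_j = 0$ from Lemma~\ref{lem:degree0}. This does hold, but for a different reason: by Frobenius reciprocity $r_j = \dim\rho^{U_j} = \langle \Q[G/U_j],\rho\rangle_G$, and since $\Theta$ is a relation, $\sum_j n_j\,\Q[G/U_j]=0$ in the representation ring, whence $\sum_j n_j r_j = 0$. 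With this correction your argument goes through and indeed gives the general inflation-invariance lemma you suggested naming.
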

\begin{proof}
  By \cite[Proposition 2.45]{tamroot}, we have $\cC_{\bar{\Theta}}(\bar{\rho})=
  \cC_{\Theta}(\rho)$. The result therefore immediately follows from
  Proposition \ref{prop:regconstpGassman}.
\end{proof}

Now let $G$ be the group of affine linear transformations 
$T_{a,b}\colon x\mapsto ax+b$ of $\Z/8\Z$, where $a\in (\Z/8\Z)^{\times}$
and $b\in \Z/8\Z$, let $U_1=\langle T_{a,0}\colon a \in (\Z/8\Z)^\times\rangle$
and $U_2= \langle T_{-1,0}, T_{3,4}\rangle$, as in Example \ref{ex:2-Gassman}.
Let $\chi$ be an irreducible faithful representation of
$N=\langle T_{1,b}\colon b \in \Z/8\Z\rangle$. Then $I = \Ind_N^G\chi$ is a
4-dimensional irreducible $\C[G]$-representation, and is realisable over $\Q$,
so it gives rise to an irreducible $\Q[G]$-representation. Its isomorphism class
does not depend on the choice of the representation $\chi$.
All the other complex irreducible representations of $G$ are also realisable
over $\Q$, and are 1- and 2-dimensional.
An explicit calculation, which we omit, gives the regulator constants
of all the irreducible $\Q[G]$-representations with respect to $\Theta=U_1-U_2$
as follows.
\begin{proposition}\label{prop:2regcst}
Let $\rho$ be an irreducible $\Q[G]$-representation. Then\newline
$\cC_{\Theta}(\rho)\equiv 2\mod{(\Q^\times)^2}$ if $\rho\cong I$, and
$\cC_{\Theta}(\rho)\equiv 1\mod{(\Q^\times)^2}$ otherwise.
\end{proposition}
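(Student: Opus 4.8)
The plan is to reduce the computation of $\cC_{\Theta}(\rho)$ for each irreducible $\Q[G]$-representation $\rho$ to an evaluation of the Burnside-group function $U\mapsto \#U$ (up to squares) via the same Frobenius-reciprocity strategy used for $\GL_2(\F_p)$ in Proposition~\ref{prop:regconstpGassman}. First I would recall the complex representation theory of $G\cong \Z/8\Z\rtimes(\Z/8\Z)^\times$. The commutator subgroup is the group of squares $\langle T_{1,b}\colon b\in 2\Z/8\Z\rangle$ inside $N$, so $G^{\ab}\cong (\Z/8\Z)^\times\times(\Z/2\Z)$ has order $8$, giving eight $1$-dimensional representations, all rational. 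By Clifford theory applied to the normal abelian subgroup $N$, the remaining irreducibles are induced from characters of $N$ on which a subgroup of $(\Z/8\Z)^\times$ acts: the characters of order dividing $4$ come in orbits of size $2$ under the action of $(\Z/8\Z)^\times$ (which acts through $(\Z/8\Z)^\times/\{\pm1\}$ on $\widehat{N}$), yielding $2$-dimensional irreducibles, and the two faithful characters of $N$ form a single orbit of size $4$, yielding the single $4$-dimensional irreducible $I=\Ind_N^G\chi$. One then checks by summing $1^2\cdot 8 + 2^2\cdot(\text{number of }2\text{-dim}) + 4^2 = \#G = 32$ that this list is complete, and that every representation is rational (for $I$, because the character is integer-valued, $\Ind_N^G\chi$ being independent of the choice of faithful $\chi$ and visibly rational-valued on each conjugacy class).

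Next, for the $1$-dimensional representations $\rho=\mu$: by Example~\ref{ex:regconsttriv}-type reasoning, $\cC_\Theta(\mu)\equiv (\#U_2)^{\epsilon_2}/(\#U_1)^{\epsilon_1}\pmod{(\Q^\times)^2}$ where $\epsilon_j=1$ if $\mu|_{U_j}$ is trivial and $\epsilon_j=0$ otherwise; since $\#U_1=\#U_2=4$ is a square, all these regulator constants are trivial regardless of the $\epsilon_j$. For each $2$-dimensional $\rho=\Ind_H^G\psi$ (with $H$ the relevant index-$2$ subgroup containing $N$ and $\psi$ a non-faithful character of $N$ extended to $H$), Proposition~\ref{prop:Frobenius} gives $\cC_\Theta(\rho)=\cC_{\Res_H^G\Theta}(\psi)$, and since $\psi$ is $1$-dimensional, this is again a ratio of orders of subgroups appearing in the Mackey expansion $\Res_H^G(U_1-U_2)=\sum_{x\in U_1\backslash G/H} H\cap x^{-1}U_1x - \sum_{x\in U_2\backslash G/H} H\cap x^{-1}U_2x$. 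The key point will be that because $U_1\cong U_2\cong C_2\times C_2$ and $U_1-U_2$ is a $\Z_{(q)}[G]$-relation for all odd $q$ (Example~\ref{ex:2-Gassman}), Lemma~\ref{lem:Grelations} forces these ratios to be odd powers of $2$ only when a genuine $2$-local obstruction is detected; a direct check of double cosets shows each $2$-dimensional $\psi$ contributes a perfect square. Finally, for $I=\Ind_N^G\chi$ with $\chi$ faithful, Proposition~\ref{prop:Frobenius} gives $\cC_\Theta(I)=\cC_{\Res_N^G\Theta}(\chi)$, and $\Res_N^G\Theta = \sum_{x} N\cap x^{-1}U_1x - \sum_{y} N\cap x^{-1}U_2x$; since $\chi$ is a faithful character of the $2$-group $N$, it has no nonzero fixed vectors under any nontrivial subgroup, so each term contributes $(\#(N\cap x^{-1}U_jx))^{-1}$ only when $N\cap x^{-1}U_jx$ is trivial (contributing $1$) and otherwise contributes $(\#(N\cap x^{-1}U_jx))^{-1}$ weighted appropriately — unwinding this parity count is exactly where the asymmetry between $U_1$ and $U_2$ enters.

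**The main obstacle** I expect is precisely this last computation: showing $\cC_\Theta(I)\equiv 2\pmod{(\Q^\times)^2}$ requires pinning down, for the two conjugacy classes of Klein four-subgroups $U_1=\langle T_{-1,0},T_{3,0}\rangle$ and $U_2=\langle T_{-1,0},T_{3,4}\rangle$, the precise intersections $N\cap x^{-1}U_jx$ as $x$ ranges over double-coset representatives in $U_j\backslash G/N$, and then evaluating $\prod_x (\#(N\cap x^{-1}U_jx))^{-[\chi^{N\cap x^{-1}U_jx}\neq 0]}$ — but since $\chi$ is faithful on $N$, $\chi^W=0$ for every nontrivial $W\le N$, so only the trivial-intersection terms survive with contribution $1$, and the count reduces to comparing $\#(U_1\backslash G/N)$ with the number of those double cosets for which $N\cap x^{-1}U_1x\neq 1$, versus the same for $U_2$. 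One computes $G/N\cong(\Z/8\Z)^\times$ has order $8$, so $\#(U_j\backslash G/N)$ and the relevant intersection pattern differ between $j=1$ and $j=2$ by exactly one factor of $2$; this is the heart of the ``explicit calculation, which we omit'' and the only step that is not formal. Everything else follows from Corollary~\ref{cor:multiplicative}, Proposition~\ref{prop:orthoreg}, Proposition~\ref{prop:Frobenius}, and Example~\ref{ex:regconsttriv} exactly as in the $\GL_2(\F_p)$ case, so I would present the argument by analogy with Proposition~\ref{prop:regconstpGassman}, carrying out only the Mackey double-coset bookkeeping for $\Res_N^G\Theta$ in detail.
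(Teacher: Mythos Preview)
Your Frobenius-reciprocity strategy breaks down for $I$, and the computation you outline would give the wrong answer. Proposition~\ref{prop:Frobenius} requires the induced module to be an $S[D]$-module with $S\subset\bR$; the faithful character $\chi$ of $N\cong\Z/8\Z$ is not a $\Q[N]$-module, and in fact $I$ is not induced from any $\Q[N]$-module (over $\Q$ the faithful irreducible of $N$ is already four-dimensional, so inducing it would give something $16$-dimensional). Concretely, both $U_1$ and $U_2$ are complements to the normal subgroup $N$ in $G=N\rtimes(\Z/8\Z)^\times$, so each $U_j\backslash G/N$ consists of a single double coset with $N\cap U_j=\{1\}$. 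Hence $\Res_N^G\Theta=\{1\}-\{1\}=0$ in the Burnside group of $N$, and your recipe would yield $\cC_0(\chi)=1$, not $2$. (Incidentally, $G/N\cong(\Z/8\Z)^\times$ has order $4$, not $8$.) The same objection applies to your treatment of $\rho_+$: the extension $\tilde\psi_+$ takes the value $i$ on $T_{1,1}$, so is not a $\Q[H]$-module, and $\rho_+$ is not induced over~$\Q$ from any one-dimensional $\Q[H]$-module either.

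A fix that stays within your framework is to apply Proposition~\ref{prop:Frobenius} to the permutation module $\Q[G/U_1]=\Ind_{U_1}^G\triv$, which \emph{is} legitimately induced over $\Q$. A Mackey computation gives $\Res_{U_1}^G\Theta=2U_1+\{1\}-\langle T_{3,0}\rangle-\langle T_{5,0}\rangle-\langle T_{7,0}\rangle$ in the Burnside group of $U_1$, whence $\cC_\Theta(\Q[G/U_1])=\cC_{\Res_{U_1}^G\Theta}(\triv)=\tfrac12\equiv 2$ by Example~\ref{ex:regconsttriv}. Since $\Q[G/U_1]\cong\triv\oplus\mu\oplus\rho_+\oplus I$ and $\cC_\Theta(\triv)=\cC_\Theta(\mu)=1$, it remains to separate $\rho_+$ from $I$; one clean way is to realise $I\subset\Q[G/U_1]$ as the image of the central idempotent $\tfrac12(1-T_{1,4})$ and compute directly with the permutation pairing: with basis $\{e_b:b\in\Z/8\Z\}$ one finds $I^{U_1}=\langle e_0-e_4\rangle$ and $I^{U_2}=\langle e_1+e_7-e_3-e_5\rangle$, giving $\cC_\Theta(I)=(2/4)\big/(4/4)=\tfrac12\equiv 2$, hence $\cC_\Theta(\rho_+)\equiv 1$. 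The paper itself only records the result of ``an explicit calculation, which we omit'', so there is no method to compare against beyond this.
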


\section{Computations}\label{sec:calc}

In this section, we present the computational methods used to prove
Proposition~\ref{prop:intro2}. The computations were run on the Warwick number
theory cluster. We have made the resulting data available at
\url{https://arxiv.org/abs/1601.06821}.

For each prime $p$, we look for explicit pairs of manifolds $M_1$ and $M_2$
as in Proposition \ref{prop:intro2}. The pairs we construct all arise as intermediate
coverings of $G$-coverings $X\rar Y$ of closed hyperbolic $3$-manifolds,
for suitable finite groups~$G$. We compute many such coverings, and explicitly
compute the homology groups of suitable intermediate covers. There are several
choices involved, which we now explain in detail.

\subsection{The choice of $Y$.}\label{sec:Y}
We only compute with arithmetic hyperbolic $3$-manifolds.
We will recall the definitions, assuming basic facts
and terminology on quaternion algebras; a standard reference
is~\cite{MaclachlanReid}.

Let $\hyp^3$ denote the hyperbolic 3-space.
We identify the group of orientation preserving
isometries of $\hyp^3$ with $\PSL_2(\C)$ via
the Poincar\'e extension (\cite[p. 48]{MaclachlanReid}).
For any order $\order$ in a quaternion algebra, let~$\order^1$ denote the
group of elements in $\order$ of reduced norm~$1$. 
A number field is called \emph{almost totally real (ATR)} if it has exactly one
complex place. A quaternion algebra over an ATR number field~$F$ is called
\emph{Kleinian} if it ramifies at every real place of~$F$. For any ring $R$,
let $\mat_2(R)$ denote the algebra of $2\times 2$ matrices over~$R$.

\begin{definition}
An \emph{arithmetic hyperbolic $3$-manifold} is a manifold commensurable with
a quotient~$\hyp^3/\Gamma(\order)$, where $\Gamma(\order)$ arises as follows.
Let~$B/F$ be a Kleinian quaternion algebra over an ATR number field, and let
$\iota\colon B\otimes_{F}\C \cong \mat_2(\C)$ be an isomorphism induced by a
complex embedding of~$F$. Then define
$
\Gamma(\order) = \iota(\order^1)/\{\pm1\}\subset\PSL_2(\C).
$
\end{definition}

An arithmetic hyperbolic $3$-manifold has finite volume and is compact if
and only if~$B$ is a division algebra -- see \cite[Theorem 8.2.2]{MaclachlanReid}.

Let $B/F$ be a quaternion algebra over a number field,
let~$\Z_F$ denote the ring of integers in~$F$, and let $\order_{\max}$
be a maximal order in $B$. Let~$\ideal$
be an ideal of~$\Z_F$ coprime to the discriminant of~$B$, so
that~$\order_{\max}\otimes_{\Z_F}\Z_F/\ideal\cong \mat_2(\Z_F/\ideal)$, and
let~$\order_0(\ideal)\subset \order_{\max}$ be the preimage of the subring of
upper-triangular matrices
under such an isomorphism. Then $\order_0(\ideal)$ is an order in~$B$. 

\begin{lemma}\label{lem:torfree}
  Let~$F$ be a number field, and let $B$ be a quaternion algebra over~$F$
  that is a division algebra. Let~$\cS$ be the set of primes~$q$ such
  that~$F(\zeta_{2q})$ is isomorphic to a quadratic extension of $F$ contained
  in~$B$, where $\zeta_{2q}$ denotes a primitive $(2q)$-th root of unity in an
  algebraic closure of $F$. For each $q\in \cS$, and for each primitive $(2q)$-th
  root of unity $\zeta$, choose a prime ideal $\prm_{\zeta}$ of
  $\Z_F$ such that the minimal polynomial of $\zeta$ is irreducible modulo
  $\prm_{\zeta}$. Let~$\cP$ be the set of such $\prm_{\zeta}$. Then:
  \begin{enumerate}
  \item the set $\cS$ is finite;
  \item for every ideal $\ideal$ of $\Z_F$ divisible by all $\prm \in \cP$,
  the group $\order_0(\ideal)^1/\{\pm1\}$ is torsion-free.
  \end{enumerate}
\end{lemma}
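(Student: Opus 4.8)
The plan is to reduce the torsion-freeness of $\order_0(\ideal)^1/\{\pm1\}$ to a statement about finite order elements of $B^\times$, and to analyse these via the quadratic subfields of $B$ they generate.

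\medskip

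First I would establish (1). An element $\gamma\in B^\times$ of finite order $n>1$ generates a subfield $F(\gamma)$ of $B$; since $\gamma^n=1$ but no smaller power, $F(\gamma)\cong F(\zeta_n)$ for a primitive $n$-th root of unity. As $B$ is a quaternion algebra, $[F(\gamma):F]\le 2$, so $F(\zeta_n)$ is either $F$ or a quadratic extension of $F$ embeddable in $B$. If $\gamma\in \order_0(\ideal)^1$, i.e. $\gamma$ has reduced norm $1$, then in $\order_0(\ideal)^1/\{\pm1\}$ the element $\gamma$ has some order; the point is that, after passing to the quotient by $\{\pm1\}$, a nontrivial torsion element lifts to $\gamma\in\order_0(\ideal)^1$ with $\gamma^m=\pm1$ for some $m$, so $\gamma^{2m}=1$, and $\gamma$ again generates a cyclotomic extension $F(\zeta_{2m'})$ of $F$ of degree $\le 2$. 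Writing $\zeta_{2m'}$ as a product of prime-power-order roots of unity and using that $F$ has bounded degree, the set of $2m'$ that can occur is finite; extracting the odd prime divisors $q$ (the prime $2$ being harmless since $\zeta_2=-1\in F$) gives that $\cS$ is finite. I would phrase this as: $\cS$ is contained in the finite set of primes $q$ with $[F(\zeta_{2q}):F]\le 2$, which is finite because $[F(\zeta_{2q}):\Q]\ge \varphi(2q)\to\infty$.

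\medskip

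Next, for (2), suppose $\ideal$ is divisible by all $\prm_\zeta\in\cP$ and that $\order_0(\ideal)^1/\{\pm1\}$ has a nontrivial torsion element. Lift it to $\gamma\in\order_0(\ideal)^1$ with $\gamma^n=1$ for some $n>2$; replacing $\gamma$ by a power we may take $n=2q$ for a prime $q$, or $n=4$. Then $\gamma$ generates $L=F(\gamma)\cong F(\zeta_n)$, a quadratic field over $F$ inside $B$, so $q\in\cS$ (the case $n=4$ can be folded in by noting $\zeta_4$ generates a quadratic cyclotomic extension, or treated as an extra prime in $\cP$; I would make the precise bookkeeping match the statement of the lemma). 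Now $\gamma\in\order_0(\ideal)$, and by construction $\order_0(\ideal)$ reduces modulo $\prm_\zeta$ into upper-triangular matrices in $\mat_2(\Z_F/\prm_\zeta)$, hence $\gamma \bmod \prm_\zeta$ is upper triangular and its characteristic polynomial over $\Z_F/\prm_\zeta$ is reducible. But $\gamma$ has minimal polynomial the minimal polynomial of $\zeta_n$ over $F$ (a factor of $\Phi_n$), which by the choice of $\prm_\zeta$ is irreducible modulo $\prm_\zeta$; this contradicts reducibility of the characteristic polynomial (which is the minimal polynomial since $[L:F]=2$). Hence no such $\gamma$ exists, and $\order_0(\ideal)^1/\{\pm1\}$ is torsion-free.

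\medskip

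The main obstacle I anticipate is the finiteness argument in (1) done cleanly, and in particular matching the exact indexing in the lemma: one must be careful that "finite order in $\order_0(\ideal)^1/\{\pm1\}$" corresponds, after squaring and taking roots of unity, precisely to the primes $q$ for which $F(\zeta_{2q})$ embeds as a quadratic subfield of $B$, and that the prime $2$ (roots of unity $\pm1$, $\pm i$) does not cause trouble — $\zeta_4$ generates a quadratic cyclotomic extension, so if $-1$ is a norm from $F(i)$ one could in principle have elements of order $4$, and these must be killed by including a suitable prime in $\cP$; I would either absorb $q=2$ into $\cS$ or remark that the order-$4$ case is handled identically by choosing $\prm$ so that $X^2+1$ is irreducible mod $\prm$. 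The reduction to quadratic subfields and the reduction-mod-$\prm$ contradiction are routine once this is set up correctly.
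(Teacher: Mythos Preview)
Your proposal is correct and follows essentially the same route as the paper: reduce to an element $\gamma\in\order_0(\ideal)^1$ of exact order $2q$ for a prime $q$ (the paper does this by lifting an element of prime order in the quotient and replacing $x$ by $-x$, which is slightly cleaner than ``taking a power''), observe that $F(\gamma)\cong F(\zeta_{2q})$ is a quadratic subfield of $B$ so $q\in\cS$, and derive a contradiction because the image of $\gamma$ modulo $\prm_\zeta$ is upper-triangular, forcing its minimal polynomial to split there. The only point you leave implicit that the paper spells out is why $F(\gamma)\neq F$: since $\nrd(\gamma)=1$, if $\gamma\in F$ then $\gamma^2=1$, contradicting $\gamma$ having order $2q>2$; your worry about $q=2$ is unnecessary, as the lemma already includes $q=2$ in $\cS$ and the argument goes through verbatim with $\zeta_4$.
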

\begin{proof}
  If~$q\in \cS$ is odd, then~$q-1 = [\Q(\zeta_{2q})\colon\Q]\le
  [F(\zeta_{2q})\colon\Q] =
  2[F\colon\Q]$, so that~$q\le 2[F\colon\Q]+1$. This proves (1).

  To prove (2), assume for a contradiction that there exists an
  element~$x\in\order_0(\ideal)^1$ such that~$x$ has prime order~$q$
  modulo~$\{\pm 1\}$. Replacing~$x$ by~$-x$ if necessary, we may assume that~$x$
  has order~$2q$: if $q$ is odd then either $x$
  or~$-x$ has order~$2q$; if $q=2$, then the only elements of order~$2$ are~$\pm 1$,
  since~$B$ is a division algebra, so that $x$ has order~$4$.
  In particular, $F(x)$ is a subfield of~$B$ isomorphic to $F(\zeta_{2q})$.
  We claim that this is a quadratic extension of~$F$, i.e. that~$q$ belongs to
  $\cS$: if~$F(\zeta_{2q})$ is not quadratic,
  then~$x\in F$, but~$x^2 = \nrd(x) = 1$, and~$x$ does not have order~$2q$.

  Let~$P$ be the minimal polynomial of~$x$, and let~$\prm\in \cP$ be the
  corresponding prime ideal. By definition, the image of~$x$ in $\mat_2(\Z_F/\ideal)$
  upper-triangular. Since $\prm\mid \ideal$, this implies that~$P$ is not
  irreducible modulo~$\prm$: a contradiction.
\end{proof}

In our search, we extract ATR number fields of degree up to 6 and discriminants
with absolute value up to $10^6$ from the PARI number fields database~\cite{paridb,lmfdb}.
Then, using the algorithms from~\cite{quatalg} implemented in the computer
algebra system Magma~\cite{magma},
we compute 30,000 Kleinian quaternion algebras $B$ whose 
maximal orders~$\order_{\max}\subset B$ are such that~$\Gamma(\order_{\max})$
has covolume at most~$40$. Using the sufficient condition of Lemma
\ref{lem:torfree}, we find many orders $\order_0(\ideal)$ such that
$\Gamma=\Gamma(\order_0(\ideal))$ is torsion-free, and therefore acts freely
on $\hyp^3$, and hence we produce many arithmetic hyperbolic
$3$-manifolds~$Y = \hyp^3/\Gamma$.

\begin{remark}
The usual way of proving that arithmetic groups have a torsion-free subgroup of
finite index is to consider principal congruence subgroups, which have 
large index. Lemma \ref{lem:torfree} allows us to
use groups of the form~$\Gamma(\order_0(\ideal))$, which have smaller covolume,
reducing the cost of the computation.
\end{remark}

\subsection{The choice of the covering.}\label{sec:X}
Theorem \ref{thm:noMackeyFunctor} implies that in order to produce examples
as in Proposition \ref{prop:intro2} using Sunada's method, we need to look
for $G$-coverings $X\rar Y$, where $G$ is a finite group that admits
a $G$-relation $\Theta=U_1-U_2$ that is not a $\Z_{(p)}[G]$-relation. For $p=2$,
we take $G$ and $\Theta$ as in Example \ref{ex:2-Gassman}, and for $p$ odd,
we use the relation of Example \ref{ex:p-Gassman}.

In order to find many such $G$-coverings of every manifold $Y=\hyp^3/\Gamma$
obtained in Section \ref{sec:Y}, we first compute a finite presentation
of~$\Gamma$, using the algorithms of~\cite{klngps}.
Since~$\Gamma \cong \pi_1(Y)$, $G$-coverings of~$Y$ correspond to surjective
homomorphisms~$\Gamma\to G$. Using the presentation of~$\Gamma$, we
enumerate surjective homomorphisms~$\Gamma \to G$ up to conjugacy,
using the methods described in \cite[\S 9.1]{Holt}.
The complexity of the enumeration depends heavily
on~$\# G$, so we use the following improvement.

Let~$F_n$ be the free group on~$n$ generators. Recall that for any group~$Z$
there is a canonical bijection~$\Hom(F_n,Z)\cong Z^n$. Moreover, if~$Z$ is an
abelian group, then this bijection is an isomorphism of abelian groups. We
will tacitly use this identification in the next result.
\begin{prop}
  Let~$\Gamma$ be a group with a finite presentation
  \[
    1\to R \to F_n \to \Gamma \to 1\text{,}
  \]
  where~$R$ is the normal closure of the subgroup of $F_n$ generated by~$r$
  elements~$w_1,\dots,w_r$.
  Let~$G$ be a group, let~$Z$ be a subgroup of the center of~$G$, and
  set~$\bar{G} = G/Z$.
  Let~$\bar{h}\colon \Gamma \to \bar{G}$ be a homomorphism, and let~$\tilde{h}\colon F_n
  \to G$ be an arbitrary lift of~$\bar{h}\colon F_n \to \bar{G}$.
  Let~$x = (\tilde{h}(w_i)^{-1})_{i=1}^r\in G^r$, and let~$\phi\colon Z^n\to Z^r$ denote the
  linear map induced by evaluation on the~$(w_i)_{i=1}^r$. Then:
  \begin{enumerate}
  \item we have $x\in Z^r$; and
  \item the lifts~$h\colon \Gamma \to G$ of~$\bar{h}$ are exactly the
  homomorphisms~$\tilde{h}\cdot f$, where~$f$ ranges over~$\phi^{-1}(x)$.
  \end{enumerate}
\end{prop}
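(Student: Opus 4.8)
The plan is to unwind the definitions and reduce everything to the elementary fact that a lift of a homomorphism into $\bar G = G/Z$, evaluated on a relator, lands in $Z$, and that the set of all lifts forms a torsor under $\Hom(\Gamma,Z)$.

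First I would address part (1). Fix the arbitrary set-theoretic lift $\tilde h\colon F_n\to G$ of $\bar h\colon F_n\to\bar G$ (which exists since $F_n$ is free: pick an arbitrary preimage in $G$ of the image in $\bar G$ of each free generator). For each relator $w_i$, the composite $F_n\xrightarrow{\tilde h}G\to\bar G$ agrees with $\bar h$, and since $w_i\in R=\ker(F_n\to\Gamma)$ and $\bar h$ factors through $\Gamma$, we have that the image of $\tilde h(w_i)$ in $\bar G$ is trivial, i.e. $\tilde h(w_i)\in Z$. Hence $\tilde h(w_i)^{-1}\in Z$ for all $i$, so $x\in Z^r$, proving (1). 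Note this also makes sense of the map $\phi\colon Z^n\to Z^r$: since $Z$ is abelian, $\Hom(F_n,Z)\cong Z^n$ is a group, and evaluating a homomorphism $f\colon F_n\to Z$ on the words $w_1,\dots,w_r$ gives a homomorphism $Z^n\to Z^r$ because $f(w_i)$ is a fixed word in the coordinates of $f$ (with exponents summing according to the abelianisation of $w_i$), hence depends linearly on $f$.

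Next I would prove (2). A homomorphism $h\colon F_n\to G$ is a lift of $\bar h\colon F_n\to\bar G$ if and only if $h$ and $\tilde h$ have the same composite to $\bar G$, equivalently $f := \tilde h^{-1}\cdot h$ (pointwise product in $G$; well-defined as a map $F_n\to G$ since $F_n$ is free) takes values in $Z$. Because $Z$ is central, the map $f$ is again a homomorphism $F_n\to Z$, and conversely for any $f\in\Hom(F_n,Z)=Z^n$ the pointwise product $\tilde h\cdot f$ is a homomorphism $F_n\to G$ lifting $\bar h$; this is the standard torsor structure on the set of lifts. Now $h = \tilde h\cdot f$ factors through $\Gamma$ if and only if $h(w_i)=1$ for all $i$, i.e. $\tilde h(w_i)f(w_i)=1$, i.e. $f(w_i)=\tilde h(w_i)^{-1}=x_i$ for all $i$; that is, $\phi(f)=x$. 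Similarly $\tilde h\cdot f$ is automatically a lift of $\bar h$ as a map to $\bar G$, and it descends to a homomorphism $\Gamma\to G$ lifting $\bar h\colon\Gamma\to\bar G$ precisely under the same condition. Thus the lifts of $\bar h$ are exactly the $\tilde h\cdot f$ with $f\in\phi^{-1}(x)$, which is statement (2).

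I do not expect a serious obstacle here; the only point requiring mild care is the bookkeeping around the pointwise product $\tilde h\cdot f$: one must check it is a homomorphism, which uses crucially that $Z$ is \emph{central} in $G$ (for $g,g'\in F_n$ one has $(\tilde h\cdot f)(gg')=\tilde h(g)\tilde h(g')f(g)f(g')=\tilde h(g)f(g)\tilde h(g')f(g')$ after moving the central element $f(g)$ past $\tilde h(g')$), and similarly that $f\mapsto \tilde h\cdot f$ is a bijection from $\Hom(F_n,Z)$ onto the set of lifts of $\bar h|_{F_n}$. The descent to $\Gamma$ is then a formal consequence of the universal property of the quotient $F_n\twoheadrightarrow\Gamma$. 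So the whole proof is a direct verification once the torsor-of-lifts picture is set up.
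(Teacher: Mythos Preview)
Your proof is correct and follows essentially the same line as the paper's: both show $\tilde h(R)\subset Z$ to get (1), then parametrise all lifts of $\bar h|_{F_n}$ as $\tilde h\cdot f$ with $f\in\Hom(F_n,Z)$ using centrality of $Z$, and check that descent to $\Gamma$ is equivalent to $\phi(f)=x$. Your write-up simply spells out a few more details (the torsor structure and the verification that $\tilde h\cdot f$ is a homomorphism) than the paper does.
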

\begin{proof}
  \begin{enumerate}[leftmargin=*]
  \item Since~$\tilde{h}$ is a lift of~$\bar{h}\colon \Gamma \to \bar{G}$, 
  we have~$\tilde{h}(R)\subset Z$, and~$x\in Z^r$.
  \item Since~$Z$ is central, every lift of~$\bar{h}\colon F_n \to \bar{G}$ is of the
  form~$\tilde{h}\cdot f$, where~$f\in\Hom(F_n,Z)=Z^n$. Such a lift descends to a
  homomorphism~$\Gamma \to G$ if and only if it vanishes on the generators
  of~$R$. For every such generator~$w$, we have~$(\tilde{h}\cdot f)(w)=1$ if and
  only if~$f(w) = \tilde{h}(w)^{-1}$,
  so that the homomorphism descends if and only if~$\phi(f)=x$.
  \end{enumerate}\vspace{-1em}
\end{proof}
Applying this proposition to~$G=\GL_2(\F_p)$,~$\bar{G} = \PGL_2(\F_p)$
and~$Z=\F_p^\times$, we reduce the enumeration of~$\Hom(\Gamma,G)$ to:
\begin{itemize}
\item enumerating~$\Hom(\Gamma,\bar{G})$,
\item computing inverse images under linear maps~$(\Z/N\Z)^n\to(\Z/N\Z)^r$
with $N=p-1$, which can be done efficiently by linear algebra.
\end{itemize}
Since the enumeration algorithm is expensive, another simple improvement 
was useful: if there exists a surjection~$\Gamma\to G$, then~$\Gamma^{ab}$
surjects onto~$G^{ab}$. Checking this condition before the enumeration is fast
and rules out many groups.

\subsection{Homology computation.}\label{sec:homcomp}
Given a surjective homomorphism~$\Gamma\to G$, we can compute the
homology of the isospectral manifolds as follows. For every~$U\leq G$, we have
\[
  H_1(X/U,\Z)\cong H_1(Y,\Z[G/U])\cong H_1(\Gamma,\Z[G/U])\text{,}
\]
and we can compute the first homology group by linear algebra from a
presentation of the group~$\Gamma$. 
Recall that we want to find examples of $G$-coverings~$X\to Y$ such that
$\# H_1(X/\Theta,\Z)[p^\infty] \neq 1$.

Since linear algebra over~$\Z$ is costly, we
used the following strategy: we first
compute~$H_1(X/U_j,\Z)\otimes_{\Z}\F_p\cong H_1(X/U_j,\F_p)$ for $j=1$, $2$,
which only uses linear algebra over~$\F_p$,
and we only compute the
homology over~$\Z$ if~$\dim_{\F_p} H_1(X/\Theta,\F_p)\neq 0$.  
That condition by itself already implies that $H_1(X/U_1,\Z)[p^\infty]\not\cong
H_1(X/U_2,\Z)[p^\infty]$.
In practice, when~$p\geq 5$, this condition often yields
examples in which $H_1(X/U_j,\Z)[p^\infty]$ is actually trivial for one~$j$,
and isomorphic to~$\Z/p\Z$ for the other one.


By computing explicit examples, we obtain Proposition~\ref{prop:intro2}, which we
restate here for convenience.

\begin{prop}\label{intro2bis}
Let $p\leq \maxprime$ be a prime number. Then there exist strongly isospectral
closed hyperbolic $3$-manifolds $M_1$ and $M_2$ such that
\[
  \#H_1(M_1,\Z)[p^\infty]\neq \#H_1(M_2,\Z)[p^\infty].
\]
Moreover,
if~$2<p\leq \maxprime$, then there exist $3$-manifolds $M_1$
and $M_2$ as above with
$$
\#H_1(M_1,\Z)[p^\infty]=1,\;\;\text{ and }\;\;\#H_1(M_2,\Z)[p^\infty]=p.
$$
\end{prop}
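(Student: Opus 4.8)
The plan is to prove Proposition~\ref{intro2bis} (which is Proposition~\ref{prop:intro2}) not by a further theoretical argument but by exhibiting explicit examples via the computer search described in Sections~\ref{sec:Y}--\ref{sec:homcomp}. The essential point is that all the theoretical work has already been done: Theorem~\ref{thm:noMackeyFunctor} tells us \emph{where} to look, namely at $G$-coverings $X\to Y$ of closed hyperbolic $3$-manifolds for a finite group $G$ admitting a $G$-relation $\Theta=U_1-U_2$ that fails to be a $\Z_{(p)}[G]$-relation, since only for such $\Theta$ can the $p$-primary homology of $X/U_1$ and $X/U_2$ differ. For $p=2$ we take $G=\Z/8\Z\rtimes(\Z/8\Z)^\times$ with $U_1,U_2$ as in Example~\ref{ex:2-Gassman}; for odd $p$ we take $G=\GL_2(\F_p)$ with $U_1,U_2$ as in Example~\ref{ex:p-Gassman}, where in both cases $U_1-U_2$ is a $\Z_{(q)}[G]$-relation for all $q\ne p$ but not for $q=p$. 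Sunada's construction then guarantees that $M_1=X/U_1$ and $M_2=X/U_2$ are strongly isospectral closed hyperbolic $3$-manifolds as soon as $X\to Y$ is a genuine covering, i.e. the chosen homomorphism $\Gamma\to G$ is surjective.

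First I would assemble a large supply of base manifolds $Y$: extract ATR number fields of degree $\le 6$ and $|\mathrm{disc}|\le 10^6$ from the PARI database, build Kleinian quaternion division algebras $B/F$ with $\mathrm{covol}(\Gamma(\order_{\max}))\le 40$ using the algorithms of~\cite{quatalg} in Magma, and then invoke Lemma~\ref{lem:torfree} to pass to orders $\order_0(\ideal)$ for which $\Gamma=\Gamma(\order_0(\ideal))$ is torsion-free; each such $\Gamma$ acts freely on $\hyp^3$, giving a closed arithmetic hyperbolic $3$-manifold $Y=\hyp^3/\Gamma$ together with a finite presentation of $\pi_1(Y)=\Gamma$ computed via~\cite{klngps}. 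Next, for each $Y$ I would enumerate surjections $\Gamma\to G$ up to conjugacy using the methods of~\cite[\S 9.1]{Holt}, accelerated by the two tricks of Section~\ref{sec:X}: the central-lift Proposition (reducing enumeration over $\GL_2(\F_p)$ to enumeration over $\PGL_2(\F_p)$ plus linear algebra mod $p-1$) and the quick necessary test that $\Gamma^{\mathrm{ab}}$ surject onto $G^{\mathrm{ab}}$. Finally, for each surjection I would compute $H_1(X/U_j,\F_p)\cong H_1(\Gamma,\F_p[G/U_j])$ by linear algebra over $\F_p$, and whenever $\dim_{\F_p}H_1(X/U_1,\F_p)\ne\dim_{\F_p}H_1(X/U_2,\F_p)$ — equivalently $\dim_{\F_p}H_1(X/\Theta,\F_p)\ne 0$ — recompute over $\Z$ to read off $\#H_1(X/U_j,\Z)[p^\infty]$ exactly; this mod-$p$ discrepancy already forces $H_1(M_1,\Z)[p^\infty]\not\cong H_1(M_2,\Z)[p^\infty]$, and in practice for $p\ge 5$ one typically finds the sharper situation $\#H_1(M_1,\Z)[p^\infty]=1$, $\#H_1(M_2,\Z)[p^\infty]=p$.

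The main obstacle is computational rather than conceptual: as $p$ grows the group $G=\GL_2(\F_p)$ becomes large (order $(p^2-1)(p^2-p)$), so both the enumeration of $\Hom(\Gamma,G)$ and the linear algebra over $\F_p$ in degree $[G:U_j]$ become expensive, which is exactly why the search is limited to $p\le\maxprime$ and why the improvements of Section~\ref{sec:X} are needed. A secondary subtlety is that a random covering $X\to Y$ usually has $H_1(X/\Theta,\F_p)=0$, so one genuinely must search through many base manifolds and many homomorphisms before a witnessing pair appears; the contribution of Corollary~\ref{cor:impliesconj} is to explain, representation-theoretically, when such witnesses occur (namely when the multiplicity of the representation $I$ in $H_1(X,\C)$ is odd), but for the present proposition it suffices to carry out the search and record the resulting explicit pairs $(M_1,M_2)$, whose data we make available at \url{https://arxiv.org/abs/1601.06821}. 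Verifying any individual output pair is then a routine finite check: confirm surjectivity of $\Gamma\to G$, confirm $U_1-U_2$ is the stated $G$-relation by the character count $\#(c\cap U_1)=\#(c\cap U_2)$ on conjugacy classes, and recompute the two integral homology groups.
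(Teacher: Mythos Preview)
Your proposal is correct and follows essentially the same approach as the paper: the proposition is established by the computer search of Section~\ref{sec:calc}, and you have accurately summarised each of its ingredients (choice of $G$-relation for each $p$, construction of arithmetic base manifolds via Lemma~\ref{lem:torfree}, enumeration of surjections $\Gamma\to G$ with the central-lift and abelianisation shortcuts, and the $\F_p$-then-$\Z$ homology computation). There is nothing to add.
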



\section{Examples}\label{sec:examples}
In this section, we give several numerical examples, illustrating
some features of the connection between the torsion on the homology of isospectral
manifolds, and the $G$-module structure of the integral homology of their
covering manifold.
\begin{example}\label{ex:1}
Let $G=\GL_2(\F_{37})$, let $U_1$, $U_2$ be as in Example \ref{ex:p-Gassman}.
Using the notation of Section \ref{sec:Y}, let $F$ be the ATR quartic number
field generated by a root~$t$ of $x^4-2x-1$. Consider the Kleinian quaternion
algebra
$$
B=\langle 1,i,j,k \mid i^2=j^2=k^2=ijk=-1\rangle_F
$$
over $F$. Let $\ideal$ be the principal ideal of $\Z_F$ generated by
$t^2-t-2$, which has norm~$22$, and let $\Gamma_1=\Gamma(\order_0(\ideal))$,
which is well-defined up to conjugacy in~$\PSL_2(\C)$. A fundamental domain
for~$\Gamma_1$ acting on the ball model of~$\hyp^3$ is displayed in
Figure~\ref{pic37}. The group~$\Gamma_1$
has a presentation by generators and relations as follows:
\begin{eqnarray*}
\Gamma_1 \cong\langle a,b,c,d & | & b^{-1}d^{-1}cd^2cd^{-1}cb^{-1}c^{-1}=1,\\
& & abc^{-1}b^{-1}c^{-1}aba^{-2}b=1,\\
& & d^{-2}cb^{-1}a^{-2}ba^{-2}d^{-2}c^{-1}=1,\\
& & c^{-1}aba^{-2}d^{-1}cd^{2}a^{3}bc^{-2}d=1,\\
& & b^{-1}a^{-1}b^{-1}d^{-1}cd^2aba^{-2}d^{-3}cb^{-1}=1\rangle.
\end{eqnarray*}
Let $Y=\hyp^3/\Gamma_1$.
There is a surjective group homomorphism $\Gamma_1\rar \GL_2(\F_{37})$, given by
\begin{eqnarray*}
a\mapsto \begin{pmatrix}11&1\\28&26 \end{pmatrix},\;\;\;
b\mapsto \begin{pmatrix}5&7\\30&32 \end{pmatrix},\;\;\;
c\mapsto \begin{pmatrix}8&20\\35&29 \end{pmatrix},\;\;\;
d\mapsto \begin{pmatrix}20&6\\26&17 \end{pmatrix},
\end{eqnarray*}
which gives rise to a $G$-covering $X\rar Y$
of arithmetic hyperbolic $3$-manifolds. A direct computation, as described in
Section \ref{sec:homcomp}, yields that
\begin{align*}
H_1(X/U_1,\Z) & \cong \Z^{14}\times C_2^2\times C_4^{12}\times C_8\times C_3\times
C_9^2\times C_{11} \times C_{1151}\times C_{11317},\\
H_1(X/U_2,\Z) & \cong H_1(X/U_1,\Z)\times C_{37},
\end{align*}
where $C_n$ denotes the cyclic group of order $n$. Here, we have
$\vol(X/U_1)=\vol(X/U_2) \approx 1106.067$.

\begin{figure}[tbh]
\centering
\includegraphics*[height=5cm,keepaspectratio=true]{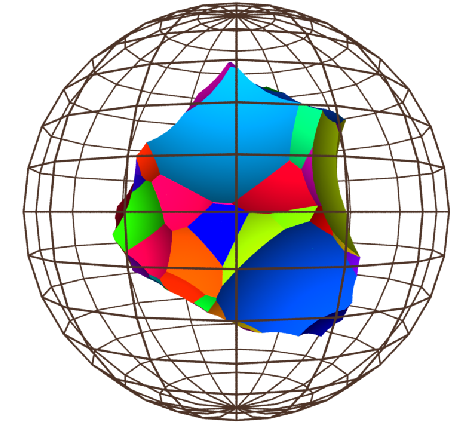}
\caption{A fundamental domain of the Kleinian group~$\Gamma_1$}\label{pic37}
\end{figure}

It follows from Corollary \ref{cor:impliesconj}, that the multiplicity
of $I=\rho(\chi,\triv)$ (see Section \ref{sec:regconst} for the definition)
in the $G$-module $H_1(X,\Q)$ is odd.
In this particular case, we computed directly that this multiplicity is 3.
\end{example}

\begin{example}
The following example shows that the $p$-torsion subgroups of two isospectral
manifolds may have the same order, but be non-isomorphic.

Let $G=\GL_2(\F_{19})$. Let $F$ be the ATR number field generated by a root~$t$ of
$x^4-x^3-3x-1$, and let $B$ be the Kleinian quaternion algebra
$$
B=\langle 1,i,j,k \mid i^2=j^2=k^2=ijk=-1\rangle_F.
$$
Let $\ideal$ be the principal ideal of $\Z_F$ generated by
$-t^3+t^2+2t+2$, of norm~$44$, and let $\Gamma_2=\Gamma(\order_0(\ideal))$,
as in Section \ref{sec:Y}. As in the previous example, $\Gamma_2$
is well-defined up to conjugacy in $\PSL_2(\C)$. 
A fundamental domain for~$\Gamma_2$ is displayed in Figure~\ref{pic19}.
The hyperbolic manifold
$Y=\hyp^3/\Gamma_2$
has a $G$-covering $X\rar Y$ (which we do not give explicitly this time),
such that
$$
H_1(X/U_1,\Z)[19^\infty]\cong C_{19} \times C_{19},\;\;\;H_1(X/U_2,\Z)[19^\infty]\cong C_{19^2}.
$$
Here, $\vol(X/U_1)=\vol(X/U_2)\approx 985.386$.

\begin{figure}[tbh]
\centering
\includegraphics*[height=5cm,keepaspectratio=true]{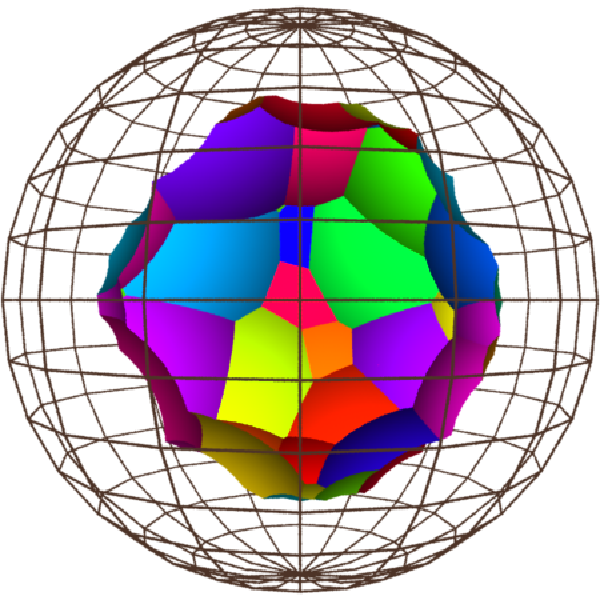}
\caption{A fundamental domain of the Kleinian group~$\Gamma_2$}\label{pic19}
\end{figure}

Corollary \ref{cor:impliesconj} implies that the multiplicity of $I$
in the $G$-module $H_1(X,\Q)$ is even, and in fact, in this example it is 0.
Moreover, unlike in the previous example, the non-isomorphic
torsion cannot be detected by the regulator constant of the $G$-module
$H_1(X,\Z)$.
\end{example}

\begin{example}
In the final example, we exhibit isospectral manifolds whose size
of torsion homology differs by a square. Such an example has interesting
representation theoretic features, as we will explain below.

Let $G=\GL_2(\F_{5})$, and let $\Theta=U_1-U_2$ be as in Example \ref{ex:p-Gassman}.
Let $F$ be the ATR number field generated by a root~$t$ of
$x^4-x^3-2x-1$, and let $B$ be the Kleinian quaternion algebra
$$
B=\langle 1,i,j,k \mid i^2=j^2=k^2=ijk=-1\rangle_F.
$$
\begin{figure}[tbh]
\centering
\includegraphics*[height=5cm,keepaspectratio=true]{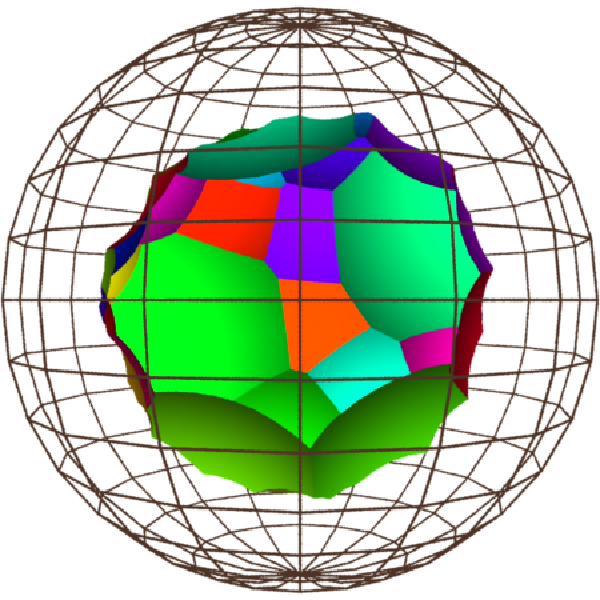}
\caption{A fundamental domain of the Kleinian group~$\Gamma_3$}\label{pic5}
\end{figure}

Let $\ideal$ be the principal ideal of $\Z_F$ generated by
$-t^2+2$, of norm~$23$, and let~$\Gamma_3=\Gamma(\order_0(\ideal))$,
as in Section \ref{sec:Y}. As in the previous examples, $\Gamma_3$
is well-defined up to conjugacy in $\PSL_2(\C)$.
A fundamental domain for~$\Gamma_3$ is displayed in Figure~\ref{pic5}.
The hyperbolic manifold
$Y=\hyp^3/\Gamma_3$
has a $G$-covering~$X\rar Y$ such that
$$
H_1(X/U_1,\Z)[5^\infty]\cong C_{5}^3,\;\;\;H_1(X/U_2,\Z)[5^\infty]\cong C_{5}.
$$
Here, $\vol(X/U_1)=\vol(X/U_2)\approx 223.790$.

Since the quotient $\#H_1(X/U_1,\Z)_{\tors}/\#H_2(X/U_1,\Z)_{\tors}$ is a square,
Corollary \ref{cor:modsquares} together with equation (\ref{eq:CheegerMuller})
imply that the regulator constants
$\cC_{\Theta}(H_2(X,\Q))\equiv\cC_{\Theta}(H_1(X,\Q))$ are trivial modulo
squares, which we confirm by computing that the multiplicity of~$I$
in~$H_1(X,\Q)$ is~$2$.
In particular, the non-triviality of $\#H_1(X/\Theta)_{\tors}$ cannot be detected
in the rational homology of $X$. However, by Corollary
\ref{cor:regcomparison}, the non-triviality of $\#H_1(X/\Theta,\Z)_{\tors}$
is detected in the $G$-module structure of the integral homology via the equality
$\ord_5(\cC_{\Theta}(H_2(X,\Z)))=2$.
\end{example}

We can obtain similar examples of interplay between the homological torsion and
representation theory for~$p=2$ using Proposition~\ref{prop:2regcst}.


\begin{thebibliography}{10}
%
\bibitem{Bartel}
A. Bartel, On Brauer--Kuroda type relations of $S$--class numbers in
dihedral extensions, J. Reine Angew. Math. \textbf{668} (2012), 211--244.


\bibitem{equivsurg}
A. Bartel and A. Page, Group representations in the homology of $3$-manifolds,
arXiv:1605.04866 [math.GT] (2016).

\bibitem{Benson}
D. J. Benson, Representations and Cohomology, Vol. I, Cambridge Studies
in Advanced Mathematics \textbf{30}, Cambridge University Press (1995).

\bibitem{BVtorsion}
N. Bergeron and A. Venkatesh, The asymptotic growth of torsion homology for
arithmetic groups, J. Inst. Math. Jussieu no. 2 (2013), 391--447.

\bibitem{BSVtorsion}
N. Bergeron, M. H. \c Seng\" un and A. Venkatesh, Torsion homology growth and
cycle complexity of arithmetic manifolds, Duke Math. J. \textbf{165} no. 9
(2016), 1629--1693.

\bibitem{Boltje}
R. Boltje, Class group relations from Burnside ring idempotents,
J. Number Theory \textbf{66} (1997), 291--305.


\bibitem{magma}
W. Bosma, J. Cannon and C. Playoust. The Magma algebra system. I. The user
language. J. Symbolic Comput., \textbf{24}  (1997) (3-4):235--265.

\bibitem{brockdunfield}
J. F. Brock and N. M. Dunfield, Injectivity radii of hyperbolic integer homology
$3$-spheres, Geom. Topol. \textbf{19} no. 1 (2015), 497--523.

\bibitem{Brown}
  K. S. Brown, Cohomology of Groups, GMT \textbf{87}, Springer (1982).

\bibitem{torsionJL}
F. Calegari and A. Venkatesh, A torsion Jacquet--Langlands correspondence,
arXiv:1212.3847v1 [math.NT] (2012).

\bibitem{Cheeger}
J. Cheeger, Analytic torsion and the heat equation, Ann. of Math. \textbf{109}
(1979), 259--322.

\bibitem{CR}
C. W. Curtis and I. Reiner, Methods of Representation Theory, with Applications to
Finite Groups and Orders, Vol. 2, John Wiley and Sons (1987).

\bibitem{tamroot}
T. Dokchitser and V. Dokchitser, Regulator constants and the parity
conjecture, Invent. Math. \textbf{178} no. 1 (2009), 23--71.

\bibitem{gordon}
C. Gordon, Survey of isospectral manifolds. Handbook of differential geometry,
Vol. I (2000), 747--778.

\bibitem{GordonSchueth}
C. Gordon, P. Perry and D. Schueth,
Isospectral and isoscattering manifolds: a survey of techniques and examples,
Geometry, spectral theory, groups, and dynamics,
Contemp. Math. \textbf{387}, Amer. Math. Soc. (2005), 157--179.

\bibitem{Hatcher}
A. Hatcher, Algebraic Topology, Cambridge University Press (2002).

\bibitem{Holt}
D. F. Holt, B. Eick, and E. A. O'Brien,
Handbook of Computational Group Theory,
Discrete Mathematics and its Applications (Boca Raton).
Chapman \& Hall (2005).

\bibitem{Ikeda}
A. Ikeda,
On the spectrum of a Riemannian manifold of positive constant curvature,
Osaka J. Math. \textbf{17} no. 1 (1980), 75--93.

\bibitem{Kac}
M. Kac, Can one hear the shape of a drum? Amer. Math. Monthly \textbf{73} (1966), 1--23.

\bibitem{lmfdb}
The LMFDB Collaboration, The L-functions and Modular Forms Database (2015),
\url{http://www.lmfdb.org}.

\bibitem{MaclachlanReid}
C. Maclachlan and A. W. Reid, The Arithmetic of Hyperbolic 3-Manifolds,
Graduate Texts in Mathematics \textbf{219}, Springer (2003).

\bibitem{Mueller1}
W. M\"uller, Analytic torsion and R-torsion of Riemannian manifolds, Adv. Math.
\textbf{28} (1978), 233--305.

\bibitem{Mueller2}
W. M\"uller, Analytic torsion and R-torsion for unimodular representations,
J. Amer. Math. Soc. \textbf{6} (1993), 721--753.

\bibitem{klngps}
A. Page, Computing arithmetic Kleinian groups,
Math. Comp. \textbf{84} no.~295 (2015)  2361--2390.

\bibitem{paridb}
The PARI~Group, PARI/GP version {\tt 2.7.5}, Bordeaux (2015),
\url{http://pari.math.u-bordeaux.fr/}.

\bibitem{Per1}
G. Perelman, The entropy formula for the Ricci flow and its geometric applications,
arXiv:math.DG/0211159 (2002).

\bibitem{Per2}
G. Perelman, Ricci flow with surgery on three-manifolds. arXiv:math.DG/0303109 (2003).

\bibitem{Per3}
G. Perelman,
Finite extinction time for the solutions to the Ricci flow on certain three-manifolds,
arXiv:math.DG/0307245 (2003).

\bibitem{PShapiro}
I. Piatetski-Shapiro, Complex Representations of $\GL(2,K)$ for Finite
Fields $K$, Contemporary Mathematics \textbf{16}, AMS (1983).

\bibitem{RaySinger}
D. B. Ray and I. M. Singer, $R$-torsion and the Laplacian on Riemannian manifolds,
Advances in Math. \textbf{7} (1971), 145--210.

\bibitem{maxorders}
I. Reiner, Maximal Orders, Academic Press (1975).

\bibitem{Laplace}
  S. Rosenberg, The Laplacian on a Riemannian Manifold, LMSST \textbf{31}, Cambridge
University Press (1997).

\bibitem{Shafarevich}
I. R. Shafarevich, Basic Algebraic Geometry, Springer (1974).

\bibitem{deSmit1}
B. de Smit, Generating arithmetically equivalent number fields with elliptic
curves in: J. P. Buhler (Ed.), Algorithmic Number Theory,
Lecture Notes in Computer Science 1423, Springer (1998) 392--399.

\bibitem{Sunada}
T. Sunada, Riemannian coverings and isospectral manifolds,
Annals Math. \textbf{121} no.~1 (1985), 169--186.

\bibitem{ClNoFormula}
J. Tate, Les conjectures de Stark sur les fonctions $L$ d'Artin en $s=0$,
Progress in Mathematics \textbf{47}, Birkhäuser (1984).

\bibitem{vigneras}
M.-F. Vign\'eras, Vari\'et\'es riemanniennes isospectrales et non isométriques,
Ann. of Math. (2) \textbf{112} no.1 (1980), 21--32.

\bibitem{quatalg}
J. Voight, Identifying the matrix ring: algorithms for quaternion algebras and
quadratic forms. Quadratic and higher degree forms, Dev. Math.
\textbf{31}, Springer (2013), 255--298.

\end{thebibliography}
\end{document}